\newtheorem{theorem}{Theorem}
\newtheorem{corollary}{Corollary}
\newenvironment{proof}[1][{\rm PROOF.}]{\begin{trivlist}
\item[\hskip \labelsep {\bfseries #1}]}{\end{trivlist}}
\journal{Discrete Applied Mathematics}
\begin{document}

\begin{frontmatter}



\title{Polyhedral Properties of the Induced Cluster Subgraphs}


\author{Seyedmohammadhossein Hosseinian, Sergiy Butenko}

\address{Texas A\&M University, Department of Industrial and Systems Engineering\\ College Station, TX 77843-3131, \{hosseinian, butenko\}@tamu.edu}

\begin{abstract}
A cluster graph is a graph whose every connected component is a complete graph. Given a simple undirected graph $G$, a subset of vertices inducing a cluster graph is called an independent union of cliques (IUC), and the IUC polytope associated with $G$ is defined as the convex hull of the incidence vectors of all IUCs in the graph. The {\sc Maximum IUC} problem, which is to find a maximum-cardinality IUC in a graph, finds applications in network-based data analysis. In this paper, we derive several families of facet-defining valid inequalities for the IUC polytope. We also give a complete description of this polytope for some special classes of graphs. We establish computational complexity of the separation problem for most of the considered families of valid inequalities and explore the effectiveness of employing the corresponding cutting planes in an integer (linear) programming framework for the {\sc Maximum IUC} problem through computational experiments.
\end{abstract}

\begin{keyword}
network clustering \sep cluster subgraphs \sep independent union of cliques \sep integer programming


\end{keyword}

\end{frontmatter}




\section{Introduction}
\label{sec:intro}
A graph is called a {\em cluster graph} if its every connected component is a complete graph. Given a simple undirected graph $G=(V,E)$, with the set of vertices $V$ and the set of edges $E$, an {\em independent union of cliques} (IUC) is a subset of vertices $U \subseteq V$ inducing a cluster graph. The \textsc{Maximum IUC} problem is to find an IUC of maximum cardinality in $G$. The importance of this problem stems from its application in {\em graph clustering} (partitioning the vertices of a graph into cohesive subgroups), which is a fundamental task in unsupervised data analysis; see~\cite{schaeffer2007} and the references therein for a survey of graph clustering applications and methods. A cluster graph is an ideal instance from the standpoint of cluster analysis---as it is readily composed of mutually disjoint ``tightly knit" subgroups---and the \textsc{Maximum IUC} problem aims to identify the largest induced cluster graph contained in an input graph $G$. Such an induced subgraph uncovers important information about the heterogeneous structure of the graph, hence of the data set that it represents, including the inherent number of its clusters and their cores (centers). This information is of crucial importance in network-based data clustering, as several popular graph-clustering algorithms take this information as input parameters and their performance heavily relies on its accuracy~\cite{schaeffer2007,nascimentoA2011}. Besides the practical importance of this problem, the definition of IUC subsumes two fundamental structures in a graph, i.e., cliques and independent sets. The \textsc{Maximum Clique} and \textsc{Maximum Independent Set} problems are among the most popular problems of combinatorial optimization, several variants of which have been studied in the literature; see e.g.~\cite{Karp72,BomzeA99,WuHao15,GLS,Nemhauser99, pattilloA2013}. Therefore, the study of the \textsc{Maximum IUC} problem, that admits both of these complementary structures as feasible solutions, is also of a particular theoretical interest. 

The \textsc{Maximum IUC} problem has appeared under different names in the literature. Fomin et al.~\cite{fominA2010} introduced this problem as the \textsc{Maximum Induced Cluster Subgraph} problem, and proposed an exact $\mathcal{O}(1.6181^n n^{\mathcal{O}(1)})$-time algorithm for it, where $n=|V|$. Ertem et al.~\cite{ertemA2016} considered the \textsc{Maximum IUC} problem as a relaxation of the \textsc{Maximum $\alpha$-Cluster} problem with the maximum local clustering coefficient, i.e., $\alpha = 1$. They proposed a graph clustering algorithm based on a given maximum IUC, referred to as disjoint 1-clusters in~\cite{ertemA2016}, and showed the competence of their method via experiments with real-life social networks. Even though the term ``independent union of cliques'' was used earlier (see, e.g.,~\cite{BBSB06localmax}), it was not until very recently that the corresponding optimization problem was referred to as ``\textsc{Maximum IUC}''  by Ertem et al.~\cite{ertemA2018}. In this work, the authors studied some basic properties of IUCs and analyzed complexity of the \textsc{Maximum IUC} problem on some restricted classes of graphs. They also performed computational experiments using a combinatorial algorithm (Russian Doll Search) and an integer (linear) programming formulation of this problem. To the best of our knowledge, this is the only work containing computational experiments on the \textsc{Maximum IUC} problem, and the results show that this problem is quite challenging for the exact solution methods. It is known that a graph is a cluster graph if and only if it contains no induced subgraph isomorphic to $P_3$ (the path graph on three vertices)~\cite{grammA2004}. In this regard, the \textsc{Maximum IUC} problem has also been referred to as the \textsc{Maximum Induced $P_3$-Free Subgraph} problem~\cite{fominA2010}. In our presentation to follow, we refer to induced subgraphs isomorphic to $P_3$ as {\em open triangles}. 

Evidently, finding a maximum IUC in $G$ is equivalent to finding a minimum number of vertices whose deletion  turns $G$ into a cluster graph. The latter is the optimization version of the \textsc{Cluster Vertex Deletion} problem ($k$-CVD), which asks if an input graph $G$ can be transformed into a cluster graph by deleting at most $k$ vertices, and is known to be NP-hard by Lewis and Yannakakis theorem~\cite{lewisYannakakis1980}. The study of this problem started from the viewpoint of parameterized complexity with the work of Gramm et al.~\cite{grammA2004}, who proposed an $\mathcal{O}(2.26^k + nm)$-time fixed-parameter tractable algorithm for $k$-CVD, where $n = |V|$ and $m=|E|$. Later, H{\"u}ffner et al.~\cite{huffnerA2010} and Boral et al.~\cite{boral2016} improved this result to $\mathcal{O}(2^k k^9 + nm)$ and $\mathcal{O}(1.9102^k (n+m))$, respectively. Le et al.~\cite{leA2018} showed that $k$-CVD admits a kernel with $\mathcal{O}(k^{5/3})$ vertices, which means there exists a polynomial-time algorithm that converts an $n$-vertex instance of $k$-CVD to an equivalent instance with $\mathcal{O}(k^{5/3})$ vertices. Fomin et al.~\cite{fominA2016} used the parameterized results of~\cite{boral2016} to show that the \textsc{Minimum CVD} problem, hence \textsc{Maximum IUC}, is solvable in $\mathcal{O}(1.4765^{n+o(n)})$ time, which is better than the former result of~\cite{fominA2010}. It is also known that the \textsc{Minimum CVD} problem is approximable to within a constant factor, but it is UGC-hard to approximate it within a factor strictly better than 2. You et al.~\cite{youA2017} proposed the first nontrivial approximation algorithm for \textsc{Minimum CVD} with the approximation ratio of $\frac{5}{2}$. Fiorini et al.~\cite{fioriniA2016,fioriniA2018improved} improved the approximation ratio to $\frac{7}{3}$, and later to $\frac{9}{4}$. Very recently, Aprile et al.~\cite{aprileA2020} proposed a (tight) 2-approximation algorithm for this problem; they also studied the Sherali-Adams hierarchy~\cite{Sherali90} of an integer-programming formulation of \textsc{Minimum CVD}. In particular, they have shown that, for every fixed $\epsilon >0$, the order $r = 1 + (2 / \epsilon)^4$ 
LP relaxation of this formulation (under the Sherali-Adams scheme) has the integrality gap of at most $2+\epsilon$. Finally, we should mention that several other problems involving cluster subgraphs (not necessarily induced) have been  considered in the literature as well, including  \textsc{Cluster Editing}~\cite{shamirA2004,dehneA2006,bocker2012,komusiewicz2012,bockerA2013,fominA2014,bastosA2016}, \textsc{Disjoint Cliques}~\cite{jansenA1997,AmesA2014}, \textsc{$s$-Plex Cluster Vertex Deletion}~\cite{vanA2012}, and \textsc{$s$-Plex Cluster Editing}~\cite{guoA2010}.

From a broader perspective, the \textsc{Maximum IUC} problem is to identify a maximum-cardinality {\em independent set} in a graph-based {\em independence system}. An independence system $(S,\mathcal{S})$ is a pair of a finite set $S$ together with a family $\mathcal{S}$ of subsets of $S$ such that $I_2 \subseteq I_1, I_1 \in \mathcal{S}$ implies $I_2 \in \mathcal{S}$. Referring to an independence system $(S,\mathcal{S})$, every $I \in \mathcal{S}$ is called an independent set and every $D \subseteq S, D \notin \mathcal{S}$, is called a {\em dependent set}. The minimal (inclusion-wise) dependent subsets of $S$ are called {\em circuits} of the independence system $(S,\mathcal{S})$. In graph-theory terminology, an independent set (stable set, vertex packing) is a subset of pairwise non-adjacent vertices in a graph $G=(V,E)$. Let $\mathcal{I}$ denote the set of all independent sets (in the sense of graph theory) in $G$. Then, $(V,\mathcal{I})$ is an independence system whose circuit set is given by $E$; hence, the two definitions coincide. Following this concept, $V$ together with the set of all IUCs in $G$ form an independence system whose set of circuits is given by the three-vertex subsets of $V$ inducing open triangles in the graph. This perspective on the \textsc{Maximum IUC} problem is of our interest as some of the results to follow correspond to certain properties of independence systems. It is worth noting that many well-known problems of graph theory, beyond those mentioned here, convey the notion of independence systems associated with a graph; see for example~\cite{krishnamoorthyA1979,pilipczuk2008}.  

This paper presents a comprehensive study of the \textsc{Maximum IUC} problem from the standpoint of polyhedral combinatorics. More specifically, we study the facial structure of the IUC polytope associated with a simple undirected graph $G$, and identify several facet-defining valid inequalities for this polytope. As a result, we obtain the full description of the IUC polytope for some special classes of graphs. We also study the computational complexity of the separation problems for these inequalities, and perform computational experiments to examine their effectiveness when used in a branch-and-cut scheme to solve the \textsc{Maximum IUC} problem. The organization of this paper is as follows: we continue this section by introducing the terminology and notation used in this paper. In Section~\ref{sec:polytope}, the IUC polytope is defined and the strength of its fractional counterpart---obtained form relaxing the integrality of the variables in definition of the original polytope---is discussed. In Section~\ref{sec:substructs}, we study some facet-producing structures for the IUC polytope and present the corresponding facet-inducing inequalities, as well as convex hull characterization (when possible). In our study, we pay special attention to similarities between the facial structure of the IUC polytope and those of the independent set (vertex packing) and clique polytopes. In Section~\ref{sec:separation}, we investigate the computational complexity of the separation problem for each class of the proposed valid inequalities. Section~\ref{sec:experiment} contains the results of our computational experiments, and Section~\ref{sec:conclusion} concludes this paper. 
\subsection{Terminology and notation}
\label{sec:notation}
\noindent
Throughout this paper, we consider a simple undirected graph $G=(V,E)$, where $V$ is the vertex set and $E \subseteq \{\{i,j\} \: | \: i,j \in V, i \neq j\}$ is the edge set. Unless otherwise stated, we assume $|V|=n$. Given a subset of vertices $S \subseteq V$, the subgraph {\em induced} by $S$ is denoted by $G[S]$ and is defined as $G[S]=(S,E(S))$, where $E(S)$ is the set of edges with both endpoints in $S$. A {\em clique} is a subset of vertices inducing a complete graph, and an {\em independent set} (stable set, vertex packing) is a subset of vertices inducing an edgeless graph. A clique (resp. independent set) is called {\em maximum} if it is of maximum cardinality, i.e., there is no larger clique (resp. independent set) in the graph. Cardinality of a maximum clique in $G$ is called the {\em clique number} of the graph, and is denoted by $\omega(G)$. Respectively, the {\em independence number} of $G$ is the cardinality of a maximum independent set in the graph, and is denoted by $\alpha(G)$. In a similar manner, a {\em maximum} IUC in $G$ is an IUC of maximum cardinality, and the {\em IUC number} of $G$, denoted by $\alpha^{\omega}(G)$, is the cardinality of a maximum IUC in the graph. Since every clique as well as every independent set in $G$ is an IUC, the IUC number of the graph is bounded from below by its clique number and independence number, i.e., $\max \{ \omega(G), \alpha(G) \} \leq \alpha^{\omega}(G)$. The (open) {\em neighborhood} of a vertex $i \in V$ is the set of vertices $j \in V \backslash \{i\}$ adjacent to $i$, and is denoted by $N(i)$. The {\em closed neighborhood} of $i$ is defined as $N[i]=N(i) \cup \{i\}$. The {\em incidence vector} of a subset of vertices $S \subseteq V$ is a binary vector $x \in \mathbb{R}^n$ such that $x_i=1,\forall i \in S,$ and $x_i=0,\forall i \in V \backslash S$. Conventionally, the incidence vector of a single vertex $i \in V$ is denoted by $e_i$. We use ${\bf 0}$, ${\bf 1}$ and ${\bf 2}$ to denote the vectors of all zero, one and two, respectively. The corresponding dimensions will be clear from the context.

We assume familiarity of the reader with fundamentals of polyhedral theory, and just briefly mention the concept of {\em lifting}~\cite{Padberg75,wolsey76,zemel78}, as several proofs to follow are based on lifting arguments. Let $\mathscr{P}$ be the convex hull of the set of 0-1 feasible solutions to an arbitrary system of linear inequalities with a nonnegative coefficient matrix, i.e., $\mathscr{P}= \text{conv} \{x \in \{0,1\}^n \: | \: Ax \leq b \}$. Corresponding to a subset of variables indexed by $N' \subseteq N = \{1,\ldots,n\}$, consider the polytope $\mathscr{P}_{(x_{N'}=0)}$ obtained from $\mathscr{P}$ by setting $x_j=0, \forall j \in N'$. Let $N''=N \backslash N'$, and suppose that the inequality $\sum_{i \in N''} \pi_i x_i \leq \pi_0$ induces a facet of $\mathscr{P}_{(x_{N'}=0)}$. Then, lifting a variable $x_j, j \in N'$, into this inequality leads to an inequality $\pi_j x_j + \sum_{i \in N''} \pi_i x_i \leq \pi_0$, where
\begin{equation} \label{eq:lift}
\pi_j =\pi_0 - \max \left \{ \sum_{i \in N''} \pi_i x_i | \sum_{i \in N''} A_i x_i \leq b - A_j ; \: x_i \in \{0,1\}, \forall i \in N'' \right \},
\end{equation}
which is facet-defining for $\mathscr{P}_{(x_{N' \backslash \{j\}}=0)}$. In~\eqref{eq:lift}, $A_i$ denotes the coefficient vector of a variable $x_i$ in the system of inequalities, i.e., the $i$-th column of $A$. The importance of this result is due to the fact that some facets of the original polytope $\mathscr{P}$ can be obtained from the facets of the restricted polytopes through sequential (or simultaneous) lifting procedures. In particular, some facets of the polytope associated with a graph $G$ for a certain problem may be identified through lifting the facets of the lower-dimensional polytopes corresponding to its subgraphs. 
Finally, note that~\eqref{eq:lift} is equivalent to 
\begin{equation}
\pi_j = \pi_0 - \max \left \{ \sum_{i \in N''} \pi_i x_i \: | \: x \in \mathscr{P} \text{ and } x_j=1 \right \},
\end{equation}
which is easier to use when $\mathscr{P}$ is defined to be the convex hull of the incidence vectors of the sets of vertices in a graph holding a certain property.
\section{The IUC Polytope}
\label{sec:polytope}
\noindent
The IUC polytope associated with a simple, undirected graph $G$, denoted by $\mathscr{P}_{_{\!\!\mathcal{IUC}}}(G)$, is the convex hull of the incidence vectors of all IUCs in $G$, which can be characterized by the circuit set of the corresponding independence system as follows~\cite{NemhauserTrotter74}:
\begin{equation}
\mathscr{P}_{_{\!\!\mathcal{IUC}}}(G) = \text{conv} \{x \in \{0,1\}^n \: | \: x_i + x_j + x_k \leq 2, \forall \{i,j,k\} \in \Lambda(G)\},
\end{equation}
where $\Lambda(G)$ denotes the set of three-vertex subsets of $V$ inducing open triangles in $G$. Patently, $\alpha^\omega (G) = \max \{\sum_{i \in V} x_i \: | \: x \in \mathscr{P}_{_{\!\!\mathcal{IUC}}}(G) \}$, and the {\em maximum weight IUC} problem, which is to find an IUC with the maximum total weight in a vertex-weighted graph, can be formulated as 
\begin{equation}
\text{Maximize } \sum_{i \in V} w_i x_i \text{ ~subject to~ } x \in \mathscr{P}_{_{\!\!\mathcal{IUC}}}(G),
\end{equation}
with $w_i$ denoting the weight of a vertex $i \in V$. We call each inequality $x_i + x_j + x_k \leq 2, \{i,j,k\} \in \Lambda(G)$, an open-triangle (OT) inequality. The following theorems establish the basic properties of $\mathscr{P}_{_{\!\!\mathcal{IUC}}}(G)$ including the strength of the OT inequalities.
\begin{theorem}
$\mathscr{P}_{_{\!\!\mathcal{IUC}}}(G)$ is full-dimensional, and for every $i \in V$, the inequalities $x_i \geq 0$ and $x_i \leq 1$ are facet-defining for this polytope.
\end{theorem}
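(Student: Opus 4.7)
The plan is to establish all three claims by directly exhibiting collections of affinely independent incidence vectors of IUCs lying on the corresponding faces. The key observation I would rely on is that the empty set, every singleton $\{i\}$, and every two-vertex subset $\{i,j\}$ are IUCs: the empty graph and any one-vertex graph are trivially cluster graphs, and on two vertices both the edgeless graph and $K_2$ are cluster graphs, so no $P_3$ can arise.

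For full-dimensionality, I would take the $n+1$ incidence vectors $\mathbf{0}, e_1, \ldots, e_n$, all of which correspond to IUCs by the observation above. Subtracting $\mathbf{0}$ from each of the remaining $n$ vectors yields the standard basis $e_1,\ldots,e_n$ of $\mathbb{R}^n$, which is linearly independent, so these $n+1$ points are affinely independent and $\dim \mathscr{P}_{_{\!\!\mathcal{IUC}}}(G)=n$.

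For the facet $x_i\ge 0$, I would list the $n$ points $\mathbf{0}$ and $e_j$ for $j\in V\setminus\{i\}$. Each is the incidence vector of an IUC, each satisfies $x_i=0$, and the $n-1$ differences $e_j-\mathbf{0}=e_j$ ($j\neq i$) are linearly independent. Hence these $n$ points are affinely independent and lie on the face, which has dimension $n-1$, establishing that the inequality is facet-defining. For the facet $x_i\le 1$, I would instead use the $n$ points $e_i$ and $e_i+e_j$ for $j\in V\setminus\{i\}$, all of which are incidence vectors of IUCs (singletons and pairs) and all satisfy $x_i=1$. Taking $e_i$ as the base point, the $n-1$ differences $(e_i+e_j)-e_i=e_j$ for $j\neq i$ are again linearly independent, so these points are affinely independent and the face has dimension $n-1$.

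Since the constructions are entirely explicit and the required IUCs are of size at most two, no real obstacle arises: the argument is essentially a verification of affine independence via standard basis vectors, and it does not depend on the edge set $E$ at all. The only minor point to articulate carefully is the distinction in the third step between the subtraction base point $e_i$ and the remaining $n-1$ vectors, which cleanly reduces to the same linear-independence argument as in the previous two steps.
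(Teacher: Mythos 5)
Your proposal is correct and follows essentially the same approach as the paper: both arguments use the points $\mathbf{0}$ and $e_i$ ($i \in V$) for full-dimensionality, the points $\mathbf{0}$ and $e_j$ ($j \neq i$) for the face $x_i = 0$, and the points $e_i$ and $e_i + e_j$ ($j \neq i$) for the face $x_i = 1$, all justified by the observation that every subset of at most two vertices is an IUC. The only difference is that you spell out the affine-independence verification via differences and the standard basis, which the paper leaves implicit.
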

\begin{proof}
Note that ${\bf 0} \in \mathscr{P}_{_{\!\!\mathcal{IUC}}}(G)$. Also, every single vertex, as well as every pair of vertices in $G$ is an IUC, by definition.
Full-dimensionality of $\mathscr{P}_{_{\!\!\mathcal{IUC}}}(G)$ is established by noting that ${\bf 0}$ and $e_i, \forall i \in V$, are $n+1$ affinely independent points in $\mathbb{R}^n$ belonging to this polytope. Besides, for every $i \in V$, {\bf 0} and $e_j, \forall j \in V \backslash \{i\}$, belong to $\mathcal{F}^0_i = \{x \in \mathscr{P}_{_{\!\!\mathcal{IUC}}}(G) \: | \: x_i = 0\}$, which indicates that $\mathcal{F}^0_i$ is a facet of $\mathscr{P}_{_{\!\!\mathcal{IUC}}}(G)$. Similarly, for every $i \in V$, the points $e_i$ and $e_{ij} = e_i + e_j, \forall j \in V \backslash \{i\}$, are affinely independent and belong to $\mathcal{F}^1_i = \{x \in \mathscr{P}_{_{\!\!\mathcal{IUC}}}(G) \: | \: x_i = 1\}$, hence $\mathcal{F}^1_i$ is a facet of $\mathscr{P}_{_{\!\!\mathcal{IUC}}}(G)$.
\end{proof}

It follows from full-dimensionality of $\mathscr{P}_{_{\!\!\mathcal{IUC}}}(G)$ that every facet-defining inequality of this polytope is unique to within a positive multiple. Thus, in order to prove that a proper face $\mathcal{F}$ of $\mathscr{P}_{_{\!\!\mathcal{IUC}}}(G)$ defined by a valid inequality $\pi(x) \leq \pi_0$ is a facet, it suffices to show that any supporting hyperplane of $\mathscr{P}_{_{\!\!\mathcal{IUC}}}(G)$ containing $\mathcal{F}$ is a (non-zero) scalar multiple of $\pi(x) = \pi_0$.

\begin{theorem} \label{th:p3}
The inequality $x_i + x_j +x_k \leq 2, \{i,j,k\} \in \Lambda(G)$, is facet-defining for $\mathscr{P}_{_{\!\!\mathcal{IUC}}}(G)$ if and only if there does not exist a vertex $v \in V\backslash \{i,j,k\}$ such that $H = \{i,j,k,v\}$ induces a chordless cycle in $G$.
\end{theorem}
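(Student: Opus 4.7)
The plan is to prove both directions by a careful analysis of which IUCs are tight on the OT inequality, combined with a straightforward dimension count. First I would fix notation: among $\{i,j,k\} \in \Lambda(G)$, let $j$ denote the middle vertex of the induced $P_3$, so that $ij,\, jk \in E$ and $ik \notin E$. Under this labelling, $\{i,j,k,v\}$ induces a chordless $C_4$ if and only if $v$ is adjacent to $i$ and $k$ but not to $j$; the cycle is then $i$–$j$–$k$–$v$–$i$.

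For necessity (the ``only if'' direction), suppose such a vertex $v$ exists. A case check on the three possible two-element subsets of $\{i,j,k\}$ shows that each of $\{i,j,v\}$, $\{j,k,v\}$, and $\{i,k,v\}$ induces a $P_3$ together with $v$: the missing edge within $\{i,j,k\}$ is $ik$, the edges $iv$ and $kv$ are present, and $jv$ is absent, so exactly one edge is missing in each triple. Consequently, any IUC $S$ with $|S \cap \{i,j,k\}| = 2$ must satisfy $v \notin S$, so the face $\mathcal{F} = \mathscr{P}_{_{\!\!\mathcal{IUC}}}(G) \cap \{x : x_i + x_j + x_k = 2\}$ is contained in the coordinate hyperplane $\{x : x_v = 0\}$. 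Since $\mathscr{P}_{_{\!\!\mathcal{IUC}}}(G)$ is full-dimensional, this forces $\dim \mathcal{F} \le n-2$, ruling out a facet.

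For sufficiency, assume no such $v$ exists; I would exhibit $n$ affinely independent IUC incidence vectors lying in $\mathcal{F}$. Three ``base'' points are the incidence vectors of the 2-vertex IUCs $\{i,j\}$, $\{j,k\}$, and $\{i,k\}$. For each remaining vertex $v \in V \setminus \{i,j,k\}$, I would produce a 3-vertex IUC of the form $\{p, q, v\}$ with $\{p,q\} \subset \{i,j,k\}$. The existence of such a pair follows from a small case analysis on the adjacency pattern $(a,b,c) \in \{0,1\}^3$ of $v$ toward $(i,j,k)$: the forbidden pattern is exactly $(1,0,1)$, and in every other case at least one of $\{i,j,v\}$, $\{j,k,v\}$, or $\{i,k,v\}$ avoids an induced $P_3$ (for example, $(1,1,0)$ yields a triangle on $\{i,j,v\}$, $(0,1,0)$ yields $K_2 \cup K_1$ on $\{i,k,v\}$, and the all-zero pattern makes any of the three triples an IUC).

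Affine independence would be checked by subtracting the incidence vector of $\{i,j\}$ from the other $n-1$ points: the $n-3$ difference vectors coming from the chosen $\{p,q,v\}$ each carry a unique $1$ in a distinct coordinate outside $\{i,j,k\}$, giving a permuted identity block in those columns, while the two remaining differences, $e_k - e_i$ and $e_k - e_j$, are plainly linearly independent within the coordinate block indexed by $\{i,j,k\}$. The main obstacle I anticipate is the bookkeeping for the forward direction: organizing the eight adjacency patterns of $v$ and exhibiting, for each non-forbidden pattern, an explicit pair $\{p, q\}$ that produces a valid 3-vertex IUC in the face. Everything else reduces to the dimension accounting above.
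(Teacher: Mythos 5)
Your proof is correct, and the combinatorial core is the same as the paper's: everything hinges on the observation that, for $v\notin\{i,j,k\}$, the only adjacency pattern of $v$ to $(i,j,k)$ for which no pair $\{p,q\}\subset\{i,j,k\}$ yields a three-vertex IUC $\{p,q,v\}$ is precisely the pattern creating a chordless $C_4$. The packaging differs in two ways. For sufficiency, the paper uses the indirect method (any supporting hyperplane containing $\mathcal{F}$ must be a multiple of $x_i+x_j+x_k=2$, with the points $e_i+e_j+e_v$ forcing $a_v=0$), whereas you use the direct method, exhibiting $n$ affinely independent incidence vectors in $\mathcal{F}$; these are interchangeable, and your explicit eight-case table on the adjacency pattern of $v$ makes precise what the paper compresses into the remark that $G[H]$ has at most three open triangles sharing a common vertex. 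For necessity, the routes genuinely diverge: the paper defers to Theorem~\ref{th:hole}, observing that the OT inequality is dominated by the facet-defining $4$-hole inequality, while you give a self-contained argument that every IUC tight on the OT inequality must exclude $v$, so $\mathcal{F}\subseteq\{x\colon x_v=0\}$ and $\dim\mathcal{F}\le n-2$. Your version has the advantage of not relying on the later facet result for holes (avoiding any appearance of circularity in the exposition); the paper's is shorter because it reuses machinery it develops anyway. Both are complete and correct.
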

\begin{proof}
Clearly, $\mathcal{F} = \{x \in \mathscr{P}_{_{\!\!\mathcal{IUC}}}(G) \: | \: x_i + x_j +x_k = 2\}$ is a proper face of $\mathscr{P}_{_{\!\!\mathcal{IUC}}}(G)$. So, suppose that it is contained in a supporting hyperplane $\mathscr{H}: \sum_{v \in V} a_v x_v = b$ of this polytope. Observe that,
$$
e_i + e_j \in \mathcal{F} \subseteq \mathscr{H} \: \: \therefore \: \: a_i + a_j = b.
$$
Similarly, $a_i + a_k = b$ and $a_j + a_k = b$, which imply $a_i = a_j = a_k = a$ and $b= 2a$. Now, consider $G[H]$ the subgraph induced by $H = \{i,j,k,v\}$ for an arbitrary vertex $v \in V\backslash \{i,j,k\}$. As $G[H]$ is not a chordless cycle, it contains at most three open triangles, and those have at least one vertex in common. Let $k$ be such a vertex, then
$$
e_i + e_j + e_v \in \mathcal{F} \subseteq \mathscr{H} \: \: \therefore \: \: a_i + a_j + a_v = b \: \: \therefore \: \: a_v = 0,~\forall v \in V\backslash \{i,j,k\}. 
$$
As the interior of $\mathscr{P}_{_{\!\!\mathcal{IUC}}}(G)$ is nonempty, $a \neq 0$ and $\mathscr{H}$ is a (non-zero) scalar multiple of $x_i + x_j +x_k = 2$. To complete the proof, note that if $H = \{i,j,k,v\}$ induces a chordless cycle in $G$, then $x_i + x_j +x_k \leq 2$ is dominated by the inequality $x_i + x_j +x_k+x_v \leq 2$, which is facet-defining for $\mathscr{P}_{_{\!\!\mathcal{IUC}}}(G)$ by Theorem~\ref{th:hole} of the next section. 
\end{proof}

In Section~\ref{sec:substructs} we will show that Theorem~\ref{th:p3} is a special case of a more general statement for the so-called \emph{star inequality}, defined later. 

Obviously, the {maximum IUC} problem can be directly solved using off-the-shelf MIP solvers through the following formulation:
\begin{equation} \label{eq:IPiuc}
\begin{aligned}
\alpha^\omega (G) \: = \: \underset{}{\max} \quad
& \sum_{i \in V} x_i \\
\text{s.t.} \quad 
& x_i + x_j + x_k \leq 2, && \forall \{i,j,k\} \in \Lambda(G),\\
& x_i \in \{0,1\}, && \forall \: i \in V.
\end{aligned}
\end{equation}
However, the LP relaxation of~\eqref{eq:IPiuc} is generally too weak. In fact, given the feasibility of the fractional assignment $x_i=\frac{2}{3},\forall i \in V$, the optimal solution value of the LP relaxation problem is at least as large as $\frac{2n}{3}$, while the computational results of Ertem et al.~\cite{ertemA2018} show that the IUC number of graphs with moderate densities tends to stay in a close range of their relatively small independence and clique numbers. This is due to the fact that the fractional IUC polytope $\mathscr{P}^{F}_{_{\!\!\mathcal{IUC}}}(G) = \{x \in [0,1]^n \: | \: x_i + x_j + x_k \leq 2, \forall \{i,j,k\} \in \Lambda(G)\}$ is a polyhedral relaxation of a {\em cubically} constrained region in the space of original variables. Note that the IUC property may be enforced through trilinear products of the variables in a 0-1 program, i.e., by replacing the OT inequalities of~\eqref{eq:IPiuc} with $x_i x_j x_k =0, \forall \{i,j,k\} \in \Lambda(G)$. In such a cubic formulation, relaxing the integrality of variables will not change the set of optimal solutions, because given a set of variables $x_i=0,\forall i \in V_0 \subset V$, to satisfy the constraints, the optimality conditions necessitate $x_i=1,\forall i \in V \backslash V_0$. Hence, $\alpha^\omega(G)$ can be obtained by solving the following continuous problem:  
\begin{equation} \label{eq:IPpoly}
\alpha^\omega (G) \: = \: \underset{x \in [0,1]^n}{\max} \left \{ \sum_{i \in V} x_i \: | \: x_i x_j x_k =0, \forall \{i,j,k\} \in \Lambda(G)\right \}.
\end{equation}
It is known that the (convex and concave) envelopes of a trilinear monomial term $w=x y z$ when $0 \leq x,y,z \leq 1$ are as follows~\cite{meyerA2004}:
$$
w \leq x, \quad w \leq y, \quad w \leq z, \quad w \geq x+y+z-2, \quad w \geq 0, 
$$
which, along with $w_{ijk}=x_i x_j x_k =0, \forall \{i,j,k\} \in \Lambda(G)$, indicates that $\mathscr{P}_{_{\!\!\mathcal{IUC}}}^{F}(G)$ is a polyhedral outer-approximation of the feasible region of~\eqref{eq:IPpoly}. As a result, the gap between $\alpha^\omega(G)$ and the optimal solution value of the LP relaxation problem of~\eqref{eq:IPiuc} is due to accumulation of the gap between the trilinear terms and their overestimating envelopes. A similar result holds between continuous and integer (linear) programming formulations of the {maximum independent set} and {maximum clique} problems, except that the corresponding continuous formulations of those problems are quadratically constrained. The cubic nature of the IUC formulation justifies the weakness of $\mathscr{P}^{F}_{_{\!\!\mathcal{IUC}}}(G)$ even compared to the fractional independent set (vertex packing) and clique polytopes, which are known to be loose in general. This further reveals the importance of exploring the facial structure of the IUC polytope and applying strong cutting planes in integer (linear) programming solution methods of the {maximum IUC} problem.
\section{Facet-producing Structures}
\label{sec:substructs}
\noindent
In this section, we present some facet-inducing valid inequalities for the IUC polytope associated with chordless cycle (and its edge complement), star (and double-star), fan and wheel graphs. We study the conditions under which these inequalities remain facet-defining for the IUC polytope of an arbitrary graph that contains the corresponding structure as an induced subgraph, and present some results with respect to lifting procedure for those that do not have this property. In some cases, we will present the full description of the IUC polytope associated with these graphs.
\subsection{Chordless cycle and its complement}
\label{sec:cycle}
\noindent
The following theorem concerns the facial structure of the IUC polytope associated with a (chordless) cycle graph. In an arbitrary graph $G=(V,E)$, a subset of vertices $H \subseteq V,~|H| \geq 4$, inducing a chordless cycle is called a {\em hole}. In labeling the vertices of a hole $H$, we assume that adjacent vertices have consecutive labels with the convention $|H|+1 \equiv 1$.
\begin{theorem} \label{th:hole} \textbf{\em (Hole Inequality)} 
Let $H \subseteq V$ be a hole of cardinality $|H|=3q+r$ in $G=(V,E)$, where $q$ is a positive integer and $r \in \{0,1,2\}$. Then, the inequality
\begin{equation} \label{eq:hole}
\sum_{i \in H} x_i \leq 2q + \left \lfloor \frac{2r}{3} \right \rfloor
\end{equation}
is valid for $\mathscr{P}_{_{\!\!\mathcal{IUC}}}(G)$. Moreover, {\em (i)} it induces a facet of $\mathscr{P}_{_{\!\!\mathcal{IUC}}}(G[H])$ if and only if $r \neq 0$, and {\em (ii)} it is facet-defining for $\mathscr{P}_{_{\!\!\mathcal{IUC}}}(G)$ if $q=r=1$.
\end{theorem}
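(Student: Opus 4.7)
The plan is to treat validity, both directions of part (i), and part (ii) in turn. Validity follows from the hereditary nature of the IUC property: if $U \subseteq V$ is an IUC of $G$, then $U \cap H$ is an IUC of $G[H] = C_n$. A direct counting argument shows that a maximum IUC of $C_n$ (for $n = 3q + r$) has $n - \lceil n/3 \rceil = 2q + \lfloor 2r/3 \rfloor$ vertices: removing $k$ vertices leaves the remaining $n-k$ distributed among at most $k$ arcs of length $\leq 2$, forcing $k \geq \lceil n/3 \rceil$, with equality attainable. For the ``only if'' direction in part (i), assume $r = 0$ (so $n = 3q \geq 6$); summing the $n$ open-triangle inequalities over all cyclic triples yields $3 \sum_{v \in H} x_v \leq 2n$, so the hole inequality is a strictly positive combination of multiple OT inequalities whose faces are pairwise distinct. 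Its face therefore has codimension at least $2$ and cannot be a facet.

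For the ``if'' direction ($r \in \{1,2\}$), let $\sum_{v \in H} a_v x_v = b$ be any supporting hyperplane containing the hole face of $\mathscr{P}_{_{\!\!\mathcal{IUC}}}(G[H])$. I show all $a_v$ are equal by exhibiting, for each cyclic edge $\{i, i+1\}$ of $H$, two maximum IUCs $S, S'$ of $G[H]$ with $S' = (S \setminus \{i\}) \cup \{i+1\}$. By rotational invariance of $C_n$ it suffices to do this for $\{1, 2\}$. For $r = 2$ ($n = 3q+2$), take
\[
S = \{n,\,1,\,3,\,5,\,6,\,8,\,9,\,\ldots,\,3q-1,\,3q\},
\]
the pair $(n,1)$, the singleton $\{3\}$, and the $q-1$ pairs $(3k+2, 3k+3)$ for $k = 1, \ldots, q-1$; then $S' = (S \setminus \{1\}) \cup \{2\}$ is the maximum IUC formed by the singleton $\{n\}$, the pair $(2,3)$, and the same $q-1$ tail pairs. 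For $r = 1$ ($n = 3q+1$), take
\[
S = \{n,\,1,\,4,\,5,\,7,\,8,\,\ldots,\,3q-2,\,3q-1\},
\]
the $q$ pairs $(n,1), (4,5), \ldots, (3q-2, 3q-1)$; then $S' = (S \setminus \{1\}) \cup \{2\}$ becomes the maximum IUC with two singletons $\{n\}, \{2\}$ followed by the same $q-1$ tail pairs. In either case the swap forces $a_1 = a_2$, and cyclic invocation gives $a_v = a_{v+1}$ for every $v$; thus $a = c{\bf 1}$ and the hyperplane is a scalar multiple of the hole equation, establishing facet-hood.

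For part (ii), $q = r = 1$ gives $|H| = 4$, and the hole inequality $\sum_{w \in H} x_w \leq 2$ is facet-defining for $\mathscr{P}_{_{\!\!\mathcal{IUC}}}(G[H])$ by part (i). I sequentially lift each $u \in V \setminus H$ using the lifting formula from the Introduction. Since all previously lifted coefficients are $0$, the lifted coefficient of $u$ reduces to $\pi_u = 2 - \max\{|S \cap H| : S \text{ is an IUC of } G,\; u \in S\}$. The bound $|S \cap H| \leq 2$ is immediate (as $S \cap H$ is an IUC of $C_4$); a short case analysis on $N(u) \cap H$ shows the bound is always attained: if $u$ has two adjacent neighbors $a, b$ in $H$, then $\{u, a, b\}$ is a triangle (hence IUC); otherwise at least one diagonal $\{c, d\}$ of $H$ consists entirely of non-neighbors of $u$, giving the IUC triple $\{u, c, d\}$. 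Hence $\pi_u = 0$ for every $u$, and the lifting theorem yields $\sum_{w \in H} x_w \leq 2$ as a facet of $\mathscr{P}_{_{\!\!\mathcal{IUC}}}(G)$.

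The hard part is the swap construction in the ``if'' direction of part (i): producing maximum IUCs of $C_n$ that differ by exactly one vertex across a given cyclic edge requires careful placement. The key idea is to anchor one pair across the cyclic boundary $(n, 1)$, providing the rigidity needed to shift a single unit from position $1$ to position $2$ without breaking either the maximum size or the IUC property; once this is verified for $r \in \{1, 2\}$, cyclic symmetry completes the argument.
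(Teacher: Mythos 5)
Your proof is correct and follows essentially the same route as the paper's: the ``only if'' of (i) via writing the inequality as a positive combination of distinct OT inequalities, the ``if'' via two maximum IUCs on the face differing by a single swap of vertex $1$ for vertex $2$ followed by rotational symmetry, and (ii) via sequential lifting with all coefficients vanishing by the same two-case analysis on $N(u)\cap H$. The only (immaterial) divergence is that you establish validity by directly computing $\alpha^{\omega}(C_n)=n-\lceil n/3\rceil$ and invoking heredity, whereas the paper obtains it as a Gomory--Chv\'atal cut from the sum of the OT inequalities.
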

\begin{proof}
Consider $G[H]$ the subgraph induced by $H$. This subgraph contains $|H|$ distinct open triangles, and each vertex $i \in H$ appears in exactly three of them. Summation of the corresponding OT inequalities leads to
$$
3 \sum_{i \in H} x_i = \sum_{\{i,j,k\} \in \Lambda(G[H])} x_i + x_j +x_k \leq 2 |H| = 6q + 2r.
$$
After dividing both sides of the last inequality by $3$, integrality of the left-hand side implies validity of~\eqref{eq:hole} for $\mathscr{P}_{_{\!\!\mathcal{IUC}}}(G)$ as a Gomory-Chv\'{a}tal cut. 

(i) It is clear that, as a linear combination of OT inequalities,~\eqref{eq:hole} is not facet-defining for $\mathscr{P}_{_{\!\!\mathcal{IUC}}}(G[H])$ if $r=0$. Thus, we just need to show that $r \neq 0$ is sufficient for~\eqref{eq:hole} to induce a facet of this polytope. We examine the corresponding cases separately.  
\begin{itemize}
\item \textsc{case 1}: $r=1$\\
Consider a sequential partitioning of $H$ such that each partition contains exactly three vertices except for the last one, which is left with a single vertex. Assume a labeling of the vertices such that the first partition is given by $\{1,2,3\}$, and let $x_{U_1}$ and $x_{U_2}$ be the incidence vectors of the following sets of vertices:
$$
\begin{aligned}
U_1 &= \{1,3,3p-1,3p,\forall p \in \{2,\ldots,q\} \},\\
U_2 &= \{2,3,3p-1,3p,\forall p \in \{2,\ldots,q\} \}.
\end{aligned}
$$
The sets of black vertices in Figures~\ref{fig:hole_1}$(a)$ and~\ref{fig:hole_1}$(b)$ depict $U_1$ and $U_2$, respectively.

Observe that, $x_{U_1}, x_{U_2} \in \mathcal{F} = \{x \in \mathscr{P}_{_{\!\!\mathcal{IUC}}}(G[H]) \: | \: \sum_{v \in H} x_v = 2q\}$, hence $\mathcal{F}$ is a proper face of $\mathscr{P}_{_{\!\!\mathcal{IUC}}}(G[H])$. Suppose that $\mathscr{H}:\sum_{v \in H} a_v x_v = b$ is an arbitrary supporting hyperplane of $\mathscr{P}_{_{\!\!\mathcal{IUC}}}(G[H])$ containing $\mathcal{F}$. Then, $x_{U_1}, x_{U_2} \in \mathscr{H}$ implies that $a_1 = a_2$. The same argument using similar partitions can be used to show that this result holds for every two adjacent vertices in $G[H]$, thus $a_v = a, \forall v \in H$, and $b=2qa$. The proof is complete by noting that the interior of $\mathscr{P}_{_{\!\!\mathcal{IUC}}}(G[H])$ is nonempty, so $a \neq 0$.
\item \textsc{case 2}: $r=2$\\
Similar to the former case, let $x_{U_1}$ and $x_{U_2}$ be the incidence vectors of 
$$
\begin{aligned}
U_1 &= \{3q+2,1,3,3p-1,3p,\forall p \in \{2,\ldots,q\} \},\\
U_2 &= \{3q+2,2,3,3p-1,3p,\forall p \in \{2,\ldots,q\} \}.
\end{aligned}
$$
%
Then, the same argument leads to the result that the inequality $\sum_{v \in H} x_v \leq 2q + 1$ induces a facet of $\mathscr{P}_{_{\!\!\mathcal{IUC}}}(G[H])$. 
%
%
\end{itemize}

\begin{figure}[!t] 
\centering{
\includegraphics[width=30em]{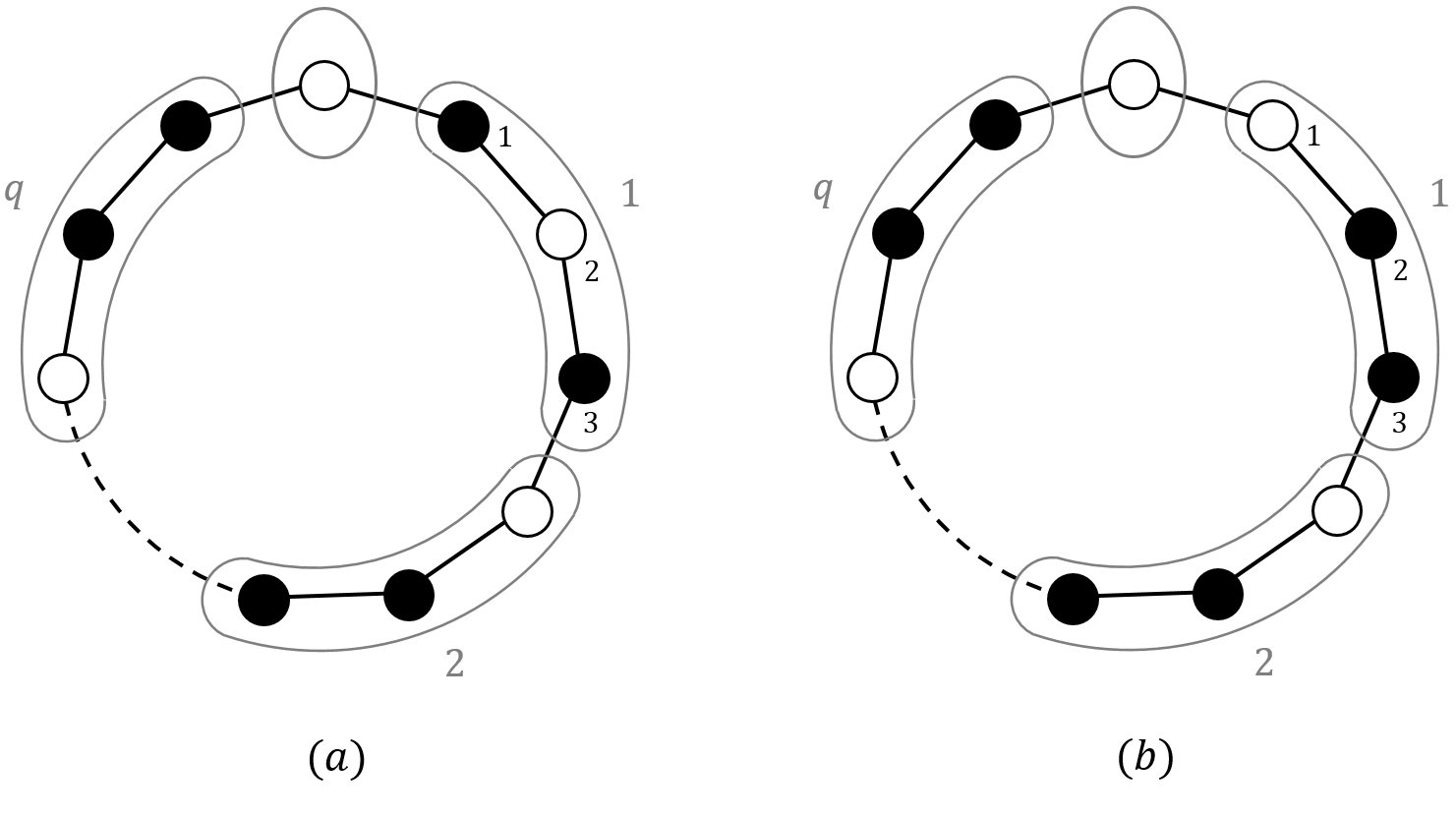}
}
\caption{Subgraph induced by a hole $H$, $|H| =3q+1$.}
\label{fig:hole_1}
\end{figure}

(ii) To conclude the proof, we show that~\eqref{eq:hole} is facet-defining for $\mathscr{P}_{_{\!\!\mathcal{IUC}}}(G)$ if $q=r=1$, i.e., $|H|=4$. By (i), $\sum_{i \in H} x_i \leq 2$ defines a facet of $\mathscr{P}_{_{\!\!\mathcal{IUC}}}(G[H])$. Let $\sum_{i \in H} x_i + \sum_{v \in V \backslash H} a_v x_v \leq 2$ be the inequality obtained from an arbitrary sequential lifting of $\sum_{i \in H} x_i \leq 2$ (to $\mathbb{R}^n$). Observe that, for every vertex $v \in V \backslash H$, there always exists two vertices $i,j \in H$ such that $\{i,j,v\} \notin \Lambda(G)$, thus, $\max \{ \sum_{i \in H} x_i \: | \: x \in \mathscr{P}_{_{\!\!\mathcal{IUC}}}(G) \text{ and } x_v = 1\} = 2$. This implies $a_v = 0, \forall v \in V \backslash H$, hence $\sum_{i \in H} x_i \leq 2$ induces a facet of $\mathscr{P}_{_{\!\!\mathcal{IUC}}}(G)$.
\end{proof}

It is easy to see the similarity between~\eqref{eq:hole} and the {\em odd-hole} inequality for the vertex packing (independent set in the sense of graph theory) polytope associated with $G$. Padberg~\cite{Padberg73} proved that, given a hole $H$ in $G$ with an odd number of vertices, the inequality $\sum_{i \in H} x_i \leq \frac{1}{2} (|H| - 1)$ is facet-defining for the vertex packing polytope associated with $G[H]$. Later, Nemhauser and Trotter~\cite{NemhauserTrotter74} noted that the odd-hole inequality is a special case of a more general result concerning independence systems, stated as follows:
\begin{theorem} \label{th:NT74}
{\em \cite{NemhauserTrotter74}} Suppose $\alpha_0$ is the cardinality of a maximum independent set in an independence system $\mathscr{S}=(S,\mathcal{S})$, and $\mathscr{S}$ contains $n=|S|$ maximum independent sets $I_1,I_2,\ldots,I_n$ with corresponding affinely independent (incidence) vectors $x^1,x^2,\ldots,x^n$. Then $\sum_{i \in S} x_i \leq \alpha_0$ is facet-defining for the polytope associated with $\mathscr{S}$, i.e., the convex hull of the incidence vectors of all members of $\mathcal{S}$ in $\mathbb{R}^n$.
\end{theorem}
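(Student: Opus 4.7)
The plan is to verify validity of $\sum_{i \in S} x_i \leq \alpha_0$ directly, and then show that the face it cuts out has dimension one less than that of the polytope itself. Let $P$ denote the polytope associated with $\mathscr{S}$---the convex hull in $\mathbb{R}^n$ of incidence vectors of all independent sets---and let $F = \{x \in P \: | \: \sum_{i \in S} x_i = \alpha_0\}$ be the face induced by the candidate inequality. Validity is immediate: the incidence vector of any $I \in \mathcal{S}$ evaluates the left-hand side to $|I| \leq \alpha_0$ by the definition of $\alpha_0$, and the inequality is preserved under convex combinations.

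For the facet property, I would bound $\dim(F)$ from both sides. By hypothesis each $x^j$ is the incidence vector of a maximum independent set, so $\sum_{i \in S} x^j_i = \alpha_0$, placing every $x^j$ in $F$; affine independence of $x^1, \ldots, x^n$ then gives $\dim(F) \geq n-1$. The reverse bound is automatic, since $F$ lies in the hyperplane $\{x \in \mathbb{R}^n : \sum_{i \in S} x_i = \alpha_0\}$ of dimension $n-1$. Hence $\dim(F) = n-1$.

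Finally, to guarantee that this $(n-1)$-dimensional face is actually a facet and not the whole polytope, I would produce one additional affinely independent point of $P$ lying off the hyperplane. Since the empty set belongs to every independence system, $\mathbf{0} \in P$; and affine independence of $x^1, \ldots, x^n$ precludes them all being zero, which forces $\alpha_0 \geq 1$ and hence $\mathbf{0} \notin F$. Adjoining $\mathbf{0}$ to $\{x^1, \ldots, x^n\}$ therefore yields $n+1$ affinely independent points in $P$, so $\dim(P) = n$ and $\dim(F) = \dim(P) - 1$ as required. The proof is mostly a dimension count; the only subtle point is excluding the degenerate case $\dim(P) = n-1$, and this is precisely where invoking $\mathbf{0} \in P$ becomes essential.
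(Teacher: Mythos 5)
The paper does not prove this statement---it is imported verbatim from Nemhauser and Trotter \cite{NemhauserTrotter74}---so there is no in-paper argument to compare against; your proof is the standard dimension count (validity, $\dim(F)\ge n-1$ from the $n$ affinely independent maximum independent sets, $\dim(F)\le n-1$ from the containing hyperplane, and $\mathbf{0}\in P\setminus F$ to rule out $F=P$) and is correct. One tiny caveat: the inference that affine independence of $x^1,\ldots,x^n$ forces $\alpha_0\ge 1$ only works for $n\ge 2$, since a single zero vector is affinely independent; for $n=1$ the statement is only meaningful under the usual convention that singletons are independent, so nothing of substance is lost.
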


Similar to the odd-hole inequality for the vertex packing polytope, Theorem~\ref{th:hole}(i) spots a facet-inducing inequality implied by Theorem~\ref{th:NT74} for the independence system defined by the IUC property on $G[H]$. Note that Theorem~\ref{th:NT74} itself is of little practical value as it provides no means to identify the independence systems satisfying such properties. 

Furthermore, Theorems~\ref{th:p3} and~\ref{th:hole}(ii) correspond to {\em maximal clique} inequality of independence systems. An independence system $(S,\mathcal{S})$ is called $k$-regular if every circuit of it is of cardinality $k$. Referring to a $k$-regular independence system $(S,\mathcal{S})$, $C \subseteq S$ is called a {\em clique} if $|C| \geq k$ and all $\binom{|C|}{k}$ subsets of $C$ are circuits of $(S,\mathcal{S})$. Clearly, this definition coincides with the graph-theoretic definition of a clique. Nemhauser and Trotter~\cite{NemhauserTrotter74} also proved that the inequality $\sum_{i \in C} x_i \leq k-1$ is facet-inducing for the polytope associated with a $k$-regular independence system $(S,\mathcal{S})$ if $C \subseteq S$ is a maximal (inclusion-wise) clique. A special case of this result for the vertex packing polytope associated with a graph $G$ had been originally shown by Padberg~\cite{Padberg73}. Let $\mathcal{U}$ denote the set of all IUCs in $G$, and $(V,\mathcal{U})$ be the corresponding independence system. Patently, every $\{i,j,k\} \in \Lambda(G)$ is a clique in $(V,\mathcal{U})$, and such a clique is maximal only if it is not contained in a clique of cardinality 4. It is not hard to see that a clique of size 4 in $(V,\mathcal{U})$ must be a hole in $G$. Therefore, Theorem~\ref{th:p3} corresponds to maximality of $\{i,j,k\} \in \Lambda(G)$ as a clique of the corresponding independence system. Besides, by the proof of Theorem~\ref{th:hole}(ii), it became clear that $(V,\mathcal{U})$ cannot have a clique of cardinality greater than 4 because, for every 4-hole $H$ and every vertex $v \in V \backslash H$, there always exist two vertices $i,j \in H$ such that $\{i,j,v\} \notin \Lambda(G)$. Thus, a hole of cardinality 4 in $G$ is actually a maximum clique in $(V,\mathcal{U})$, and Theorem~\ref{th:hole}(ii) corresponds to its maximality. This result is of a special value for the {maximum IUC} problem as all facets of this type can be identified in polynomial time. Recall that the vertex packing polytope associated with $G$ may possess an exponential number of such facets.

Given this result, we extend the concept of {\em fractional clique/independence number} of a graph $G$ to the independence system $(V,\mathcal{U})$. The fractional clique number of $G$ is the maximum sum of nonnegative weights that can be assigned to its vertices such that the total weight of every independent set in the graph is at most 1. Naturally, the fractional clique number of $G$ is equivalent to the fractional independence number of its complement graph, where the total weight of every clique is at most 1. We define the {\em fractional IUC number} of a graph $G$ as the maximum sum of nonnegative (and no more than 1) weights that can be assigned to its vertices such that the total weight of every clique in $(V,\mathcal{U})$, i.e., the vertex set of every open triangle and 4-hole in $G$, is at most 2. Clearly, the fractional IUC number of $G$ provides an upper bound on its IUC number, which is computable in polynomial time as opposed to the fractional clique and independence numbers of a graph which are NP-hard to compute~\cite{GLS}.

The vertex packing polytope associated with a cycle graph is characterized by the set of maximal clique inequalities, i.e., $x_i + x_j \leq 1,~\forall \{i,j\} \in E$, the variable (lower) bounds, and the Padberg's odd-hole inequality if the number of vertices is odd. However, the IUC polytope associated with a cycle graph has more facets than those defined by these families. As a case in point, consider a cycle graph on $n = 12$ vertices, and observe that the point given by $x_i = 1,~\forall i \in \{1,5,9\}$, and $x_i = \frac{1}{2},~\forall i \in V \backslash \{1,5,9\}$, is an extreme point of the polytope defined by the corresponding set of OT inequalities and the variable bounds. In fact, a fractional extreme point of a similar structure---generated by OT inequalities and variable bounds---exists for every cycle graph on $n = 3 q_1 + 4 q_2$ vertices, for some nonnegative integers $q_1$ and $q_2$, if $q_2$ is odd. In such a cycle graph, fixing $q_1 + q_2$ variables whose vertices are located at distances 3 or 4 from each other at 1 turns the set of OT inequalities into the set of vertex packing inequalities associated with a conflict cycle graph with an odd number of vertices; this leads to extremity of a point with half integral values on the remaining $n - (q_1 + q_2)$ variables. It is easy to check that, unless $q_2 = 1$, this fractional point always satisfies the hole inequality~\eqref{eq:hole}.

Given the definition of IUC, it is also natural to consider the correspondence between the IUC polytope and the clique polytope associated with a graph $G$, i.e., the convex hull of the incidence vectors of all cliques in $G$. Patently, the odd-hole inequality for the vertex packing polytope translates to the {\em odd-anti-hole} inequality for the clique polytope associated with $G$. An {\em anti-hole} is a subset of vertices $A \subseteq V,~|A| \geq 4$, such that the (edge) complement of the corresponding induced subgraph is a chordless cycle, i.e., $A$ is a hole in the complement graph of $G$. The following theorem states that, under a slightly stronger condition, the same inequality is valid for the IUC polytope associated with $G$ and possesses the same facial property on the polytope associated with the corresponding induced subgraph.
\begin{theorem} \label{th:antihole} \textbf{\em (Anti-hole Inequality)}
Let $A \subseteq V,~|A| \geq 6$, be an anti-hole in $G=(V,E)$. Then, the inequality
\begin{equation} \label{eq:antihole}
\sum_{i \in A} x_i \leq \left \lfloor \frac{|A|}{2} \right \rfloor
\end{equation}
is valid for $\mathscr{P}_{_{\!\!\mathcal{IUC}}}(G)$. Furthermore, it induces a facet of $\mathscr{P}_{_{\!\!\mathcal{IUC}}}(G[A])$ if and only if $|A|$ is odd.
\end{theorem}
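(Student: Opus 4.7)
The plan is to mirror the structure of Theorem~\ref{th:hole}'s proof: I would first establish validity by computing the IUC number of the induced anti-hole $G[A]$ directly, then settle the facet claim by exhibiting enough affinely independent maximum IUCs on the natural face when $|A|$ is odd and by showing the face has insufficient dimension when $|A|$ is even. Writing $n = |A|$ and labeling the vertices $1, \ldots, n$ along the complementary cycle $C_n$---so that $i,j$ are non-adjacent in $G[A]$ exactly when $\{i,j\}$ is an edge of $C_n$---the first step is a structural lemma: every IUC $U \subseteq A$ with $|U| \geq 4$ must be a clique of $G[A]$. The argument is short: if $U$ had two components of size at least $2$, the non-edges crossing between them would force four distinct vertices to be pairwise $C_n$-consecutive, impossible for $n \geq 5$; and a $K_2$ together with isolated vertices of $G[U]$ would require each isolated vertex to be $C_n$-adjacent to both endpoints, supporting only $|U| = 3$. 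Since cliques of $\overline{C_n}$ are independent sets of $C_n$, they have cardinality at most $\lfloor n/2 \rfloor$, which dominates $3$ for $n \geq 6$; hence $\alpha^{\omega}(G[A]) = \lfloor n/2 \rfloor$, and validity of \eqref{eq:antihole} for $\mathscr{P}_{_{\!\!\mathcal{IUC}}}(G)$ follows since $U \cap A$ is an IUC of $G[A]$ for every IUC $U$ of $G$.

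For the facet direction when $n = 2k+1$ is odd, I would imitate Case~1 of the proof of Theorem~\ref{th:hole}(i). Consider the $n$ cyclic shifts $I_j = \{j, j+2, \ldots, j+2(k-1)\} \pmod n$, each a maximum independent set of $C_n$ and hence a maximum clique of $\overline{C_n}$ of size $k$. Their incidence vectors all lie in the face $\mathcal{F} = \{x \in \mathscr{P}_{_{\!\!\mathcal{IUC}}}(G[A]) : \sum_{v \in A} x_v = k\}$, so for any supporting hyperplane $\sum_{v \in A} a_v x_v = b$ containing $\mathcal{F}$, comparing the sums over $I_j$ and $I_{j+2}$---which differ only by replacing $j$ with $j+2k \equiv j-1 \pmod n$---yields $a_j = a_{j-1}$ for every $j$. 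Chaining around the cycle forces all $a_v$ to share a common value $a$ and $b = ka$; since the polytope is full-dimensional, $a \neq 0$, so the hyperplane is a positive scalar multiple of $\sum_{v \in A} x_v = k$ and $\mathcal{F}$ is a facet.

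The main obstacle I anticipate is the converse, that $\mathcal{F}$ is not a facet when $n$ is even, which I would split into two subcases. For $n = 2k \geq 8$ the structural lemma forces every maximum IUC to be a clique, and the only size-$k$ independent sets of $C_{2k}$ are the two parity classes $\{1,3,\ldots,2k-1\}$ and $\{2,4,\ldots,2k\}$; hence $\mathcal{F}$ has at most two extreme points and $\dim \mathcal{F} \leq 1 < n-1$. The delicate case is $n = 6$, where additional non-clique maximum IUCs of the form ``an edge of $\overline{C_6}$ together with the unique cycle-middle vertex between its endpoints'' (for instance $\{1,2,3\}$) exist. Here I would proceed by direct enumeration of the eight maximum IUCs and verify that they all satisfy the three additional linear relations $x_i + x_{i+3} = 1$ for $i \in \{1,2,3\}$, so $\mathcal{F}$ lies in a $3$-dimensional affine subspace of $\mathbb{R}^6$ and cannot be a facet. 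Equivalently, \eqref{eq:antihole} for $n = 6$ is exactly half the sum of the three $4$-hole inequalities on $\{1,2,4,5\}$, $\{2,3,5,6\}$, and $\{1,3,4,6\}$, each of which is a facet of $\mathscr{P}_{_{\!\!\mathcal{IUC}}}(\overline{C_6})$ by Theorem~\ref{th:hole}, confirming that \eqref{eq:antihole} is redundant in this case.
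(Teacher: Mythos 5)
Your proof is correct, and it departs from the paper's in two places worth noting. For validity, the paper derives \eqref{eq:antihole} as a Gomory--Chv\'atal cut obtained by summing all $|A|$ cyclic shifts of the $4$-hole inequalities $x_1+x_2+x_i+x_{i+1}\le 2$, whereas you compute $\alpha^{\omega}(G[A])$ outright via the structural lemma that every IUC of $\overline{C_n}$ with at least four vertices is a clique; both are sound, and your route has the side benefit of classifying all maximum IUCs, which you then reuse. For the facet direction when $|A|$ is odd the two arguments coincide: the paper's pair $C$ and $C'=(C\cup\{1\})\setminus\{2\}$ of maximum cliques differing in one vertex is exactly your comparison of $I_j$ with $I_{j+2}$. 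For the converse, the paper disposes of the even case in one line by observing that \eqref{eq:antihole} is then a nonnegative combination of $4$-hole inequalities (already implicit in its validity derivation), while you give an explicit dimension count on the face; your treatment is longer but more self-contained, and you are right that $|A|=6$ needs separate care because of the non-clique maximum IUCs of the form $\{v-1,v,v+1\}$ --- a subtlety the linear-combination argument sidesteps. Your closing remark that for $|A|=6$ the inequality is half the sum of the three $4$-hole inequalities on $\{1,2,4,5\}$, $\{2,3,5,6\}$, $\{1,3,4,6\}$ is precisely the paper's mechanism specialized to that case.
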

\begin{proof}
Consider a labeling of the vertices of an anti-hole $A,~|A| \geq 6$, such that $\{i,i+1\}, \forall i \in \{1,\ldots,|A|-1\}$, and $\{|A|,1\}$ identify the pairs of non-adjacent vertices in $G[A]$. For a fixed sequence of labels, note that $\{1,2,i,i+1\}, \forall i \in \{4,\ldots,|A|-2\}$, is a hole of cardinality 4, thus the inequality $x_1 + x_2 + x_i + x_{i+1} \leq 2, \forall i \in \{4,\ldots,|A|-2\}$, is valid for $\mathscr{P}_{_{\!\!\mathcal{IUC}}}(G)$. Consider the summation of those $|A|-5$ valid inequalities. Observe that each vertex $j \in \{1,2\}$ appears $|A|-5$ times, $j \in \{4,|A|-1\}$ once, and $j \in \{5,\ldots,|A|-2\}$ twice, in the left-hand side of the resultant inequality. These partitions are illustrated by black, hatched, and gray vertices in Figure~\ref{fig:antihole}, respectively. The white vertices in Figure~\ref{fig:antihole} are those that are not present in this inequality. 
Since the starting vertex of the labeling sequence was arbitrary, $|A|$ valid inequalities of this type may be written for $\mathscr{P}_{_{\!\!\mathcal{IUC}}}(G)$. Considering all those inequalities, every vertex will appear exactly twice in a black position, twice in a white, twice in a hatched, and $|A|-6$ times in a gray position. As a result, the summation of all 4-hole inequalities associated with $G[A]$ results in 
$$
\Big ( 2(|A|-5) + 2 + 2(|A|-6) \Big ) \sum_{i \in A} x_i \leq 2 |A| (|A|-5).
$$
\begin{figure}[!t]
\centering{
\includegraphics[width=17em]{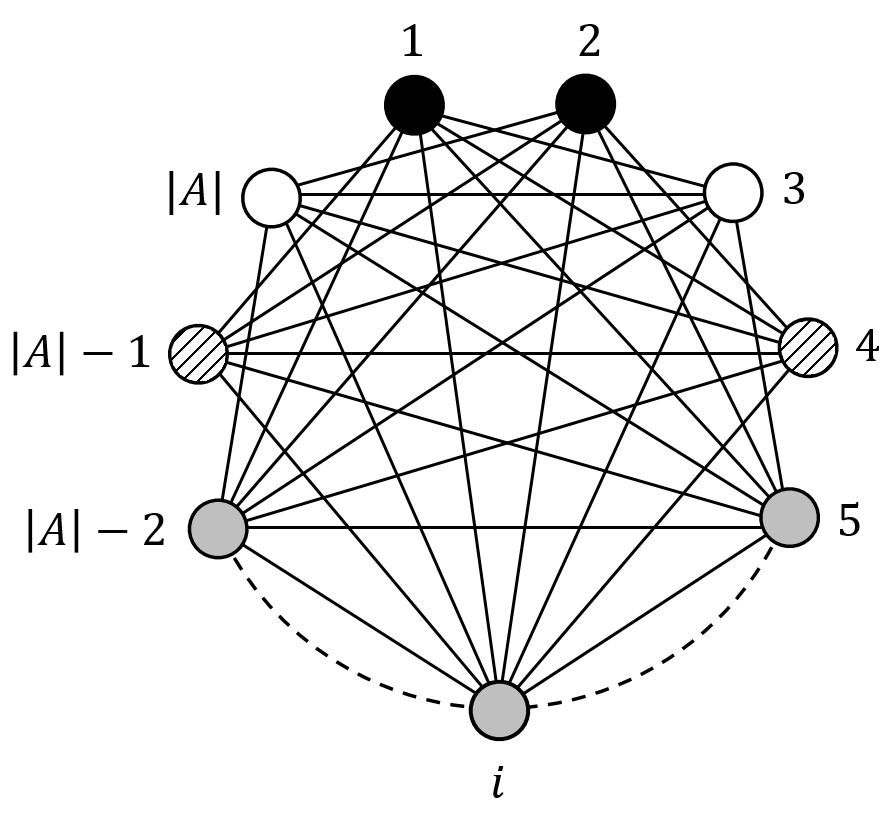}
}
\caption{Subgraph induced by an anti-hole.}
\label{fig:antihole}
\end{figure}
Therefore,
$
\sum_{i \in A} x_i \leq \frac{1}{2}|A|
$
is valid for $\mathscr{P}_{_{\!\!\mathcal{IUC}}}(G)$, which further implies validity of~\eqref{eq:antihole} as the Gomory-Chv\'{a}tal cut corresponding to this inequality. Clearly, when $|A|$ is even,~\eqref{eq:antihole} is a linear combination of the 4-hole inequalities associated with $G[A]$, hence not facet-defining for the corresponding polytope. Thus, it remains to show that it induces a facet of $\mathscr{P}_{_{\!\!\mathcal{IUC}}}(G[A])$ if $|A|$ is odd.

Given an odd anti-hole $A$, consider $\mathcal{F} = \{x \in \mathscr{P}_{_{\!\!\mathcal{IUC}}}(G[A]) \: | \: \sum_{v \in A} x_v = \frac{1}{2}(|A|-1)\}$. Associated with a fixed labeling of $A$ as described above, let $x_C$ be the incidence vector of the set of even vertices, i.e., $C = \{2,4,\ldots,|A|-1\}$. Observe that $C$ is a clique in $G[A]$ and $x_C \in \mathcal{F}$, which implies $\mathcal{F}$ is a proper face of $\mathscr{P}_{_{\!\!\mathcal{IUC}}}(G[A])$. Suppose that $\mathcal{F}$ is contained in a supporting hyperplane $\mathscr{H}:\sum_{i \in A} a_i x_i = b$ of $\mathscr{P}_{_{\!\!\mathcal{IUC}}}(G[A])$. Let $C' = (C \cup \{1\}) \backslash \{2\}$ and note that $C'$ is also a clique, hence $x_{C'} \in \mathcal{F}$. Then, $x_C,x_{C'} \in \mathscr{H}$ implies that $a_1 = a_2$. Since the start vertex of the labeling was arbitrary, this results holds for all pairs of vertices with consecutive labels. That is, $a_i = a, \forall i \in A$, and $b=\frac{1}{2}a(|A|-1)$. The proof is complete noting that the interior of $\mathscr{P}_{_{\!\!\mathcal{IUC}}}(G[A])$ is nonempty, thus $a \neq 0$. 
\end{proof}

Theorem~\ref{th:antihole} is closely related to a basic property of IUCs. Ertem et al.~\cite{ertemA2018} showed that, in an arbitrary graph, a (maximal) clique is a maximal IUC if and only if it is a dominating set. In an anti-cycle graph $G=(A,E)$, each maximal clique is maximum, and in case $|A| \geq 6$, it is also a maximum IUC by Theorem~\ref{th:antihole}. It is easy to verify that the anti-cycle graphs on 4 and 5 vertices are the only ones in which a maximal clique is not a dominating set.

In the subsequent sections, we will show that the IUC polytope associated with an anti-cycle graph has many other facets than those induced by the 4-hole inequalities and~\eqref{eq:antihole}, due to the {\em fan} substructures that appear in this graph. We postpone our discussion in this regard to Section~\ref{sec:fanwheel}. 

Finally, it should be mentioned that the odd-hole and odd-anti-hole inequalities for the vertex packing polytope have been generalized by Trotter~\cite{Trotter75} via introducing a general class of facet-producing subgraphs, called {\em webs}, that subsumes cycle and anti-cycle graphs as special cases. Given the connection between the vertex packing and IUC polytopes, webs may lead to further facet-inducing inequalities for the IUC polytope as well. We leave this topic for future research and will not pursue it in this paper. 
\subsubsection*{Lifting hole and anti-hole inequalities}
\label{sec:cyclelift}
\noindent
Let $G[H]$,~$|H| \geq 5$, be an induced chordless cycle in $G=(V,E)$, where $|H|=3q + r$ and $r \neq 0$. Then by Theorem~\ref{th:hole}, inequality~\eqref{eq:hole} induces a facet of $\mathscr{P}_{_{\!\!\mathcal{IUC}}}(G[H])$, but it is not necessarily facet-defining for $\mathscr{P}_{_{\!\!\mathcal{IUC}}}(G)$. Consider lifting a variable $x_v,~v \in V \backslash H$, into~\eqref{eq:hole} to generate a facet-defining inequality for $\mathscr{P}_{_{\!\!\mathcal{IUC}}}(G[H \cup \{v\}])$ of the form $a_v x_v + \sum_{i \in H} x_i \leq 2q + \left \lfloor \frac{2r}{3} \right \rfloor$. The value of $a_v$ in such an inequality depends on cardinality of $H \cap N(v)$ as well as distances (lengths of the shortest paths) among the vertices of $N(v)$ in $G[H]$. As a case in point, consider the graphs of Figure~\ref{fig:holelift}, and lifting $x_8$ into the inequality $\sum_{i=1}^7 x_i \leq 4$ corresponding to $H=\{1, \ldots, 7\}$. In both graphs, $|H \cap N(8)|=4$ and the difference is due to the distance between the vertices 4 and 7 (equivalently, 1 and 7) in the left-hand-side graph, and the vertices 4 and 6 (equivalently, 1 and 6) in the right-hand-side graph. The sets of black vertices illustrate the maximum size IUCs containing vertex 8 in these graphs. Clearly, the coefficient of $x_8$ vanishes in lifting $\sum_{i=1}^7 x_i \leq 4$ for the left-hand-side graph, indicating that this inequality is facet-defining for $\mathscr{P}_{_{\!\!\mathcal{IUC}}}(G[H \cup \{8\}])$. On the other hand, for the IUC polytope associated with the right-hand-side graph, this inequality is dominated by the lifted (facet-defining) inequality $x_8 + \sum_{i=1}^7 x_i \leq 4$. 
\begin{figure}[!t] 
\centering{
\includegraphics[width=24em]{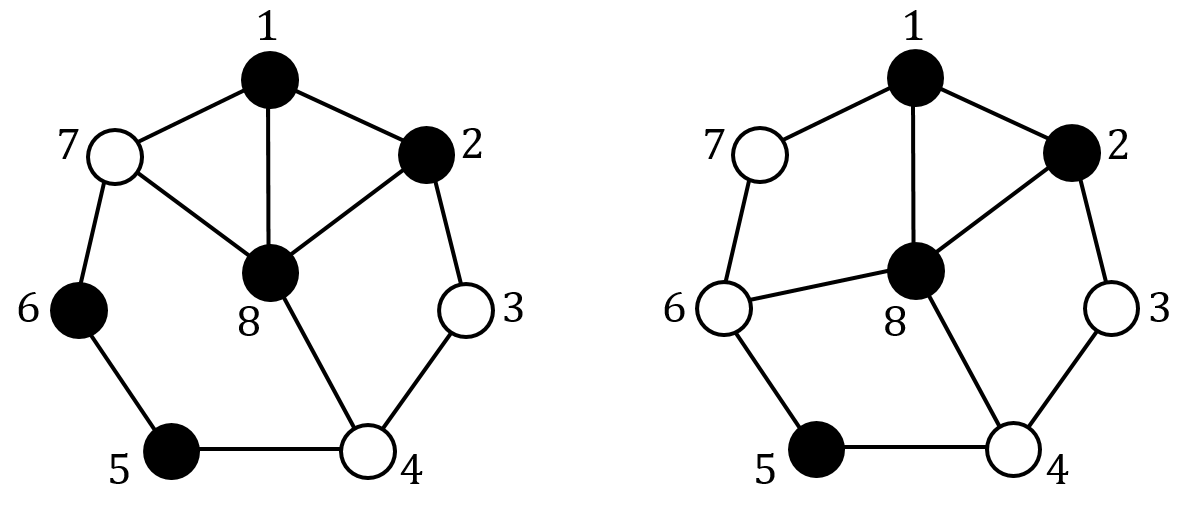}
}
\caption{Example graphs for lifting the hole inequality.}
\label{fig:holelift}
\end{figure}

Obviously, the structure of $N(v)$ in $G[H]$ becomes irrelevant at extreme values of $|H \cap N(v)|$. This result can be slightly strengthened by showing that the lengths of the paths among the neighbors of $v$ in $G[H]$ are redundant when $|H \cap N(v)|$ is restricted to 3 and 2 for $r=1$ and $r=2$, respectively. This leads to the following theorem concerning the lifting procedure of the hole inequality to higher-dimensional spaces.
\begin{theorem} \label{th:holelift}
Consider the hole inequality~\eqref{eq:hole} where $r \neq 0$. In every sequential lifting of~\eqref{eq:hole}, the coefficient of a variable $x_v,~v \in V \backslash H$, vanishes under the following conditions: {\em (i)} $|H \cap N(v)| \leq 3$ if $r=1$, {\em (ii)} $|H \cap N(v)| \leq 2$ if $r=2$. 
\end{theorem}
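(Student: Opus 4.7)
The plan is to compute the sequential lifting coefficient $\pi_v$ using the formula in~\eqref{eq:lift} and show it vanishes independently of the order in which the preceding variables are lifted. Write $N''' \subseteq N' \setminus \{v\}$ for the already-lifted indices, with coefficients $\pi_u \geq 0$ (by induction on the lifting sequence, with the unit-coefficient hole inequality as base case), and set $\pi(x) = \sum_{i \in H} x_i + \sum_{u \in N'''} \pi_u x_u$. Then $\pi_v = \pi_0 - M$, where $M$ is the maximum of $\pi(x)$ over IUCs of $G$ subject to $x_v = 1$ and $x_u = 0$ for $u \in N' \setminus (N''' \cup \{v\})$. The aim is to prove $M = \pi_0$.

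The upper bound $M \leq \pi_0$ is obtained by a deletion argument that is uniform across lifting orders: for any IUC $S$ feasible for the maximum, $S \setminus \{v\}$ is still an IUC, lies in the lower-dimensional polytope on which the previously lifted inequality is valid, and satisfies $\pi(S) = \pi(S \setminus \{v\})$ because $\pi$ carries no $x_v$-term yet. Hence $\pi(S) \leq \pi_0$. For the lower bound $M \geq \pi_0$, setting $x_u = 0$ for $u \in N'''$ reduces the task to exhibiting an IUC $S^\star \subseteq H \cup \{v\}$ with $v \in S^\star$ and $|S^\star \cap H| = \pi_0$; equivalently, one must append $v$ to some maximum IUC of the cycle $G[H]$ without creating an induced $P_3$.

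The construction of $S^\star$ is by case analysis on $T = N(v) \cap H$, using $\alpha^\omega(P_\ell) = \lceil 2\ell/3\rceil$ and the identity $\sum_i \lceil 2\ell_i/3\rceil = \bigl(2 \sum_i \ell_i + \sum_i (\ell_i \bmod 3)\bigr)/3$. When $|T| \leq 1$, or when $T$ contains two consecutive vertices of $H$, one uses either the avoidance strategy (a max IUC of $G[H \setminus T]$ extended by the isolated vertex $v$) or the triangle strategy $\{v, u, u{+}1\}$ extended by a max IUC of the path obtained after also deleting $u{-}1$ and $u{+}2$; a direct residue-mod-$3$ check yields $|S^\star \cap H| = \pi_0$ in each of these subcases. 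For $r = 2$ and $|T| = 2$ non-consecutive, avoidance succeeds unless both of the two arcs have length divisible by $3$, in which case one instead keeps a single edge $\{v, u\}$ and deletes $u$'s cycle-neighbors, turning the two new-arc residues into $(2,2)$ and again reaching $\pi_0$.

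The main obstacle is $r = 1$ with $|T| = 3$ pairwise non-adjacent. Let $\ell_{12}, \ell_{23}, \ell_{31}$ be the arc lengths of $T$ on $H$, summing to $3q - 2$ with residue-sum congruent to $1$ modulo $3$. If the residue-sum equals $4$, avoidance already delivers $\pi_0$; otherwise the residues must be $(1, 0, 0)$ up to relabeling. In that situation I would select the unique $u_k \in T$ not incident to the residue-$1$ arc, include $\{v, u_k\}$ as an edge, and delete $u_k$'s two cycle-neighbors. The two arcs incident to $u_k$ each lose one endpoint, shifting their residues from $0$ to $2$ and producing the new pattern $(1, 2, 2)$ of residue-sum $5$; the resulting max IUC evaluates to $(6q - 8 + 5)/3 = 2q - 1$, which together with $u_k$ contributes exactly $2q = \pi_0$ vertices of $H$. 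This closes the last case and yields $M = \pi_0$, hence $\pi_v = 0$ for every sequential lifting.
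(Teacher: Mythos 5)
Your proposal is correct and follows essentially the same route as the paper: both arguments observe that, regardless of which variables were lifted earlier, it suffices to exhibit for every admissible placement of $H \cap N(v)$ an IUC of $G[H \cup \{v\}]$ consisting of $v$ together with $2q + \lfloor 2r/3 \rfloor$ vertices of $H$, and then both carry out a case analysis on the positions of the neighbors of $v$ along the cycle modulo $3$. The only real difference is bookkeeping: the paper enumerates every distance pattern with an explicit clique decomposition, whereas you use $\alpha^{\omega}(P_\ell) = \lceil 2\ell/3 \rceil$ and residue sums to dispatch all but the exceptional patterns $(1,0,0)$ and $(0,0)$, which you then handle correctly; the one subcase missing from your explicit list ($r=1$ with two non-consecutive neighbors) is covered immediately by your avoidance formula, since the two arc residues are forced to sum to $2$.
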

\begin{proof}
In an arbitrary sequential lifting of the hole inequality~\eqref{eq:hole}, even if some variables not-satisfying the theorem conditions have already been lifted into~\eqref{eq:hole} before $x_v$, existence of $2q + \left \lfloor \frac{2r}{3} \right \rfloor$ vertices in $H$ forming an IUC with $v$ is sufficient for the coefficient of $x_v$ to vanish. Thus, we just need to establish the existence of these vertices under the theorem conditions. Such structures for all possible distributions of neighbors of $v$ in $G[H]$ are presented below.

Consider a sequential partitioning of $H$ as described in the proof of Theorem~\ref{th:hole}. In a clockwise walk, let $v_i^p$ denote the $i$-th vertex in the $p$-th partition, where $i \in \{1,2,3\}$, $p \in \{1, \ldots, q+1\}$, and $p=q+1$ is the partition with less than 3 vertices. The cases where $|H \cap N(v)| \leq 1$ are trivial, so we start with $|H \cap N(v)| = 2$. Let $u$ and $w$ be the neighbors of $v$ in $H$, i.e., $H \cap N(v) = \{u,w\}$, and $d = d_{G[H]}(u,w)$ denote the distance (length of the shorter path) between $u$ and $w$ in $G[H]$. Without loss of generality, we assume $u = v_1^1$ and the shorter path between $u$ and $w$ in $G[H]$ is due to a clockwise walk from $u$ toward $w$. We treat $d \in \{1,2\}$ separately as those are the only cases where $u$ and $w$ are located in the same partition. For each case, we present an IUC of size $2q + \left \lfloor \frac{2r}{3} \right \rfloor + 1$ in $G[H \cup \{v\}]$. In the following presentation, each set of vertices is a clique, and $j$ denotes the partition that contains $w$. 
\begin{equation*}
\begin{aligned}
d=1,~r=1: \quad& \{v, u=v^1_1, w=v^1_2\};~\{v^p_1,v^p_2\}, \forall p \in \{2, \ldots, q\}\\
d=1,~r=2: \quad& \{v, u=v^1_1, w=v^1_2\};~\{v^p_1,v^p_2\}, \forall p \in \{2, \ldots, q\};~\{v^{q+1}_1\}\\
d=2,~r=1: \quad& \{v\};~\{v^1_2\};~\{v^p_1,v^p_2\}, \forall p \in \{2, \ldots, q\};~\{v^{q+1}_1\}\\
d=2,~r=2: \quad& \{v\};~\{v^1_2\};~\{v^p_1,v^p_2\}, \forall p \in \{2, \ldots, q\};~\{v^{q+1}_1,v^{q+1}_2\}\\
d \equiv 0~\text{(mod 3)},~r=1: \quad& \{v\};~\{v^p_2,v^p_3\}, \forall p \in \{1, \ldots, q\}\\
d \equiv 0~\text{(mod 3)},~r=2: \quad& \{v\};~\{v^p_2,v^p_3\}, \forall p \in \{1, \ldots, q\};~\{v^{q+1}_2\}\\
d \equiv 1~\text{(mod 3)},~r=1: \quad& \{v,w=v^j_2\};~\{v^p_2,v^p_3\}, \forall p \in \{1, \ldots, j-1\};\\
&\{v^p_1,v^p_2\}, \forall p \in \{j+1, \ldots, q\};~\{v^{q+1}_1\}\\
d \equiv 1~\text{(mod 3)},~r=2: \quad& \{v,w=v^j_2\};~\{v^p_2,v^p_3\}, \forall p \in \{1, \ldots, j-1\};\\
&\{v^p_1,v^p_2\}, \forall p \in \{j+1, \ldots, q\};~\{v^{q+1}_1,v^{q+1}_2\}\\
d \equiv 2~\text{(mod 3)},~r=1: \quad& \{v\};~\{v^p_2,v^p_3\}, \forall p \in \{1, \ldots, j-1\};~\{v^j_2\};\\
&\{v^p_1,v^p_2\}, \forall p \in \{j+1, \ldots, q\};~\{v^{q+1}_1\}\\
d \equiv 2~\text{(mod 3)},~r=2: \quad& \{v\};~\{v^p_2,v^p_3\}, \forall p \in \{1, \ldots, j-1\};~\{v^j_2\};\\
&\{v^p_1,v^p_2\}, \forall p \in \{j+1, \ldots, q\};~\{v^{q+1}_1,v^{q+1}_2\}\\
\end{aligned}
\end{equation*}
Next, we consider $|H \cap N(v)| = 3$ and $r=1$. Let $H \cap N(v) = \{u,w,z\}$. As before, suppose that $u = v_1^1$, and in a clockwise walk starting from $u$, first $w$ is reached in $d_1 = d_{G[H]}(u,w)$ steps, and then $z$ is reached in $d_2 = d_{G[H]}(w,z)$ steps from $w$. We also assume $d_1 \leq d_2$. In the following presentation, $j$ and $k$ denote the partitions containing $w$ and $z$, respectively. First, we consider the cases where $u$ and $w$ are located in the same partition, i.e., $d_1 \in \{1,2\}$. Note that $u$, $w$ and $z$ are placed in one partition only if $d_1=d_2=1$.  
\begin{equation*}
\begin{aligned}
d_1=1,~d_2=1: \quad& \{v, u=v^1_1, w=v^1_2\};~\{v^p_1,v^p_2\}, \forall p \in \{2, \ldots, q\}\\
d_1=1,~d_2 \equiv 0~\text{(mod 3)}: \quad& \{v, u=v^1_1, w=v^1_2\};~\{v^2_1\};\\
&\{v^p_3,v^{p+1}_1\}, \forall p \in \{2, \ldots, q-1\};~\{v^q_3\}\\
d_1=1,~d_2 \equiv 1~\text{(mod 3)}: \quad& \{v, u=v^1_1, w=v^1_2\};~\{v^p_1,v^p_2\}, \forall p \in \{2, \ldots, q\}\\
d_1=1,~d_2 \equiv 2~\text{(mod 3)}: \quad& \{v, u=v^1_1, w=v^1_2\};~\{v^p_2,v^p_3\}, \forall p \in \{2, \ldots, q\}\\
d_1=2,~d_2 \equiv 0~\text{(mod 3)}: \quad& \{v\};~\{v^1_2\};~\{v^p_1,v^p_2\}, \forall p \in \{2, \ldots, q\};~\{v^{q+1}_1\}\\
d_1=2,~d_2 \equiv 1~\text{(mod 3)}: \quad& \{v,z=v^k_1\};~\{v^1_2\};~\{v^p_1,v^p_2\}, \forall p \in \{2, \ldots, k-1\};\\
&\{v^p_3,v^{p+1}_1\}, \forall p \in \{k, \ldots, q\}\\
d_1=2,~d_2 \equiv 2~\text{(mod 3)}: \quad& \{v\};~\{v^1_2\};~\{v^2_1\};~\{v^p_3,v^{p+1}_1\}, \forall p \in \{2, \ldots, q\}\\
\end{aligned}
\end{equation*}
Finally, the following IUCs correspond to the cases where $u$, $w$ and $z$ are all located in separate partitions.
\begin{equation*}
\begin{aligned}
d_1 \equiv 0~\text{(mod 3)},~d_2 \equiv 0~\text{(mod 3)}: \quad& \{v\};~\{v^p_2,v^p_3\}, \forall p \in \{1, \ldots, q\}\\
d_1 \equiv 0~\text{(mod 3)},~d_2 \equiv 1~\text{(mod 3)}: \quad& \{v,z=v^k_2\};~\{v^p_2,v^p_3\}, \forall p \in \{1, \ldots, k-1\};\\
&\{v^p_1,v^p_2\}, \forall p \in \{k+1, \ldots, q\};~\{v^{q+1}_1\}
\end{aligned}
\end{equation*}
\begin{equation*}
\begin{aligned}
d_1 \equiv 0~\text{(mod 3)},~d_2 \equiv 2~\text{(mod 3)}: \quad& \{v\};~\{v^p_2,v^p_3\}, \forall p \in \{1, \ldots, k-1\};~\{v^k_2\};\\
&\{v^p_1,v^p_2\}, \forall p \in \{k+1, \ldots, q\};~\{v^{q+1}_1\}\\
d_1 \equiv 1~\text{(mod 3)},~d_2 \equiv 0~\text{(mod 3)}: \quad& \{v,w=v^j_2\};~\{v^p_2,v^p_3\}, \forall p \in \{1, \ldots, j-1\};\\
&\{v^{j+1}_1\};~\{v^p_3,v^{p+1}_1\}, \forall p \in \{j+1, \ldots, q\}\\
d_1 \equiv 1~\text{(mod 3)},~d_2 \equiv 1~\text{(mod 3)}: \quad& \{v,w=v^j_2\};~\{v^p_2,v^p_3\}, \forall p \in \{1, \ldots, j-1\};\\
&\{v^p_1,v^p_2\}, \forall p \in \{j+1, \ldots, q\};~\{v^{q+1}_1\}\\
d_1 \equiv 1~\text{(mod 3)},~d_2 \equiv 2~\text{(mod 3)}: \quad& \{v,u=v^1_1\};~\{v^p_3,v^{p+1}_1\}, \forall p \in \{1, \ldots, j-1\};\\
&\{v^p_3,v^{p+1}_1\}, \forall p \in \{j, \ldots, k-2\};~\{v^{k-1}_3\};\\
&\{v^p_2,v^p_3\}, \forall p \in \{k, \ldots, q\}
\end{aligned}
\end{equation*}
\begin{equation*}
\begin{aligned}
d_1 \equiv 2~\text{(mod 3)},~d_2 \equiv 0~\text{(mod 3)}: \quad& \{v\};~\{v^p_2,v^p_3\}, \forall p \in \{1, \ldots, j-1\};\\
&\{v^j_2\};~\{v^p_1,v^p_2\}, \forall p \in \{j+1, \ldots, q\};~\{v^{q+1}_1\}\\
d_1 \equiv 2~\text{(mod 3)},~d_2 \equiv 1~\text{(mod 3)}: \quad& \{v,z=v^k_1\};~\{v^p_2,v^p_3\}, \forall p \in \{1, \ldots, j-1\};\\
&\{v^j_2\};~\{v^p_1,v^p_2\}, \forall p \in \{j+1, \ldots, k-1\};\\
&\{v^p_3,v^{p+1}_1\}, \forall p \in \{k, \ldots, q\}\\
d_1 \equiv 2~\text{(mod 3)},~d_2 \equiv 2~\text{(mod 3)}: \quad& \{v\};~\{v^p_2,v^p_3\}, \forall p \in \{1, \ldots, j-1\};\\
&\{v^j_2\};~\{v^{j+1}_1\};~\{v^p_3,v^{p+1}_1\}, \forall p \in \{j+1, \ldots, q\}\\
\end{aligned}
\end{equation*}

The proof is complete.
\end{proof}

In Section~\ref{sec:fanwheel}, we will discuss implication of this theorem about the facial structure of the IUC polytope associated with a {\em defective wheel} graph.

We conclude this section by stating a similar result concerning the anti-hole inequality. By construction of a maximum IUC in an anti-cycle graph, presented it the proof of Theorem~\ref{th:antihole}, it is easy to verify that the coefficient of a variable $x_v, v \in V \backslash A$, vanishes in lifting the odd-anti-hole inequality~\eqref{eq:antihole} to higher-dimensional spaces if $|A \cap N(v)| \in \{0,1,|A|-1,|A|\}$. The following theorem mainly asserts that distribution of the neighbors of $v$ in $G[A]$ is not relevant in lifting $x_v$ into~\eqref{eq:antihole} when $|A \cap N(v)| \in \{2,|A|-2\}$, and the corresponding coefficient vanishes under this condition as well.
\begin{theorem} \label{th:antiholelift}
Consider the odd-anti-hole inequality~\eqref{eq:antihole}. In every sequential lifting of~\eqref{eq:antihole}, the coefficient of a variable $x_v, v \in V \backslash A$, vanishes if $|A \cap N(v)| \leq 2$ or $|A \cap N(v)| \geq |A|-2$. 
\end{theorem}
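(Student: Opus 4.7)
The plan is to mimic the strategy of Theorem~\ref{th:holelift}: for each value of $|A \cap N(v)|$ covered by the statement, I will exhibit an IUC in $G[A \cup \{v\}]$ that contains $v$ together with exactly $m := \lfloor |A|/2\rfloor = (|A|-1)/2$ vertices of $A$ (recall $|A|$ is odd since we speak of the \emph{odd}-anti-hole inequality). Because this witness uses only vertices in $A \cup \{v\}$, every previously lifted variable takes value $0$ in it, so the maximum appearing in the lifting formula~\eqref{eq:lift} is bounded below by $m = \pi_0$. Validity of the partially lifted inequality furnishes the matching upper bound, and together these give $\pi_v = 0$.

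The structural engine of the construction is that $G[A]$ is the complement of the cycle $C_{|A|}$, so its maximum clique has size $m$ and corresponds to a maximum independent set of $C_{|A|}$. More generally, I will lean on the following parity observation: for $|A|$ odd, deleting any $k \in \{0,1,2\}$ vertices from $C_{|A|}$ leaves a graph whose maximum independent set still has size exactly $m$. Indeed, $k=0$ is just $\alpha(C_{|A|}) = m$; $k=1$ leaves $P_{|A|-1}$ with $\alpha = \lceil (|A|-1)/2 \rceil = m$; and for $k=2$, either the two deleted vertices are adjacent in $C_{|A|}$ (leaving $P_{|A|-2}$, whose $\alpha$ equals $\lceil(|A|-2)/2\rceil = m$) or non-adjacent (splitting $C_{|A|}$ into two paths of orders $a, b$ with $a+b = |A|-2$ odd, giving $\lceil a/2\rceil + \lceil b/2\rceil = (a+b+1)/2 = m$).

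Armed with this, two parallel constructions handle the theorem. \emph{Regime A} covers $|A\cap N(v)| \leq 2$: take a maximum clique $C$ of $G[A]$ disjoint from $N(v)$ (equivalently, a maximum independent set of $C_{|A|}$ avoiding the at most two cycle-vertices that correspond to $A \cap N(v)$). The observation guarantees $|C| = m$, and $\{v\}\cup C$ is an IUC (with $v$ isolated and $C$ a clique) meeting $A$ in $m$ vertices. \emph{Regime B} covers $|A\cap N(v)| \geq |A|-2$: take $C$ to be a maximum clique of $G[A \cap N(v)]$, which is $G[A]$ with at most two vertices deleted; the same observation supplies $|C| = m$, and because $v$ is adjacent to every vertex of $A \cap N(v)$, the set $\{v\}\cup C$ is already a single clique of $G[A\cup\{v\}]$, hence an IUC, containing $m$ vertices of $A$.

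The main technical nuisance is the parity-based accounting in the observation above; once it is in hand, each regime instantiates directly and the witnessing IUCs are explicit. As in Theorem~\ref{th:holelift}, the argument is insensitive to the order in which other variables of $V \setminus A$ have already been lifted, because the constructed witness is supported entirely on $A \cup \{v\}$, so previously lifted coefficients contribute nothing on this particular point.
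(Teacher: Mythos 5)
Your proposal is correct and follows essentially the same strategy as the paper: in both regimes you exhibit a witness IUC of size $\frac{1}{2}(|A|-1)+1$ containing $v$ and supported on $A\cup\{v\}$ (an isolated $v$ plus a clique of $G[A]$ avoiding $N(v)$ when $|A\cap N(v)|\le 2$, and a clique of $G[A\cup\{v\}]$ through $v$ when $|A\cap N(v)|\ge |A|-2$), which forces the lifting coefficient to zero. The only cosmetic difference is that you certify the existence of the required $\frac{1}{2}(|A|-1)$-clique by counting independence numbers of the complement cycle with up to two vertices deleted, whereas the paper does the same parity bookkeeping via its clockwise/counterclockwise even-label trick.
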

\begin{proof}
We show that if $|A \cap N(v)| \leq 2$, then $G[A]$ always contains a clique $C$ of cardinality $|C| = \frac{1}{2}(|A|-1)$ such that $C \cap N(v) = \emptyset$, and if $|A \cap N(v)| \geq |A|-2$, then $G[A \cup \{v\}]$ has a clique of size $\frac{1}{2}(|A|-1)+1$. The cases $|A \cap N(v)| \leq 1$ and $|A \cap N(v)| \geq |A|-1$ are trivial. Let $|A \cap N(v)| = 2$, and suppose $u$ and $w$ are the neighbors of $v$ in $A$. Consider a clockwise labeling of $A$ starting from $u$ as described in the proof of Theorem~\ref{th:antihole}. Observe that, if the label of $w$ is odd, then the set of vertices with even labels form a clique of size $\frac{1}{2}(|A|-1)$, neither of which is connected to $v$. On the other hand, if the label of $w$ is even, it will become odd in a counterclockwise labeling of $A$ starting from $u$, as $A$ itself is of odd cardinality. The clique of interest is then given by the even-labeled vertices in the counterclockwise labeling. The proof for the case $|A \cap N(v)| = |A|-2$ is identical except that $u$ and $w$ are defined to be the vertices in $A$ not adjacent to $v$. The proof is complete.
\end{proof}
\subsection{Star and double-star}
\label{sec:star}
\noindent
A {\em star} graph on $n$ vertices is a tree composed of a central vertex, called {\em hub}, connected to $n-1$ leaves. We use the notation $S =\{h\} \cup I$ to refer to the vertex set of a star graph. Here, $h$ is the hub vertex and $I$ denotes the set of leaves, remarking that they form an independent set in the graph. 
\begin{theorem} \label{th:star} \textbf{\em (Star Inequality)}
Let $S= \{h\} \cup I$ be a subset of vertices inducing a star graph in $G=(V,E)$. Then, the inequality
\begin{equation} \label{eq:star}
\sum_{i \in I} x_i + (|I| - 1) x_h \leq |I|
\end{equation}
is valid for $\mathscr{P}_{_{\!\!\mathcal{IUC}}}(G)$. Furthermore, {\em (i)} it induces a facet of $\mathscr{P}_{_{\!\!\mathcal{IUC}}}(G[S])$, and {\em (ii)} it is facet-defining for $\mathscr{P}_{_{\!\!\mathcal{IUC}}}(G)$ if and only if there does not exist a vertex $v \in V\backslash N[h]$ such that $S' = \{v\} \cup I$ induces another star graph in $G$.
\end{theorem}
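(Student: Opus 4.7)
The plan is to establish validity by a direct combinatorial argument, prove (i) via a supporting-hyperplane argument on $G[S]$, and handle (ii) by extending that hyperplane argument globally (reverse direction) together with a domination argument (forward direction). For validity, observe that in any IUC $U$, if $h \in U$ then at most one leaf can lie in $U$---otherwise two distinct leaves $i, j \in I$ together with $h$ would induce the open triangle $P_3$---so the left-hand side is at most $1 + (|I| - 1) = |I|$; if $h \notin U$ the bound $\sum_{i \in I} x_i \leq |I|$ is immediate. For (i), I would exhibit the IUCs $I$ and $\{h, i\}$ for each $i \in I$, all of whose incidence vectors satisfy the star inequality with equality, and then suppose a supporting hyperplane $\sum_{u \in S} a_u x_u = b$ of $\mathscr{P}_{_{\!\!\mathcal{IUC}}}(G[S])$ contains these $|I| + 1$ points. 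The equations obtained from $\{h, i\}$ force every $a_i$, $i \in I$, to share a common value $c$; the equation from $I$ then yields $b = |I| c$ and $a_h = (|I|-1) c$. Full-dimensionality (Theorem 1) rules out $c = 0$, so the hyperplane is a nonzero scalar multiple of the star inequality, establishing the facet claim.

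For the forward direction of (ii), I would show that the existence of a vertex $v \in V \setminus N[h]$ with $\{v\} \cup I$ inducing a star makes the star inequality dominated. Whenever $x_v = 1$ in an IUC $U$, either also $h \in U$, in which case for any leaf $i$ the set $\{v, i, h\}$ induces an open triangle (since $v$ is adjacent to $i$ and $i$ is adjacent to $h$ but $v$ is not adjacent to $h$) and hence no leaves can belong to $U$, giving a left-hand side of $|I| - 1$; or $h \notin U$, in which case adjacency of $v$ to every leaf permits at most one leaf in $U$, giving a left-hand side of at most $1$. Either way the left-hand side is at most $|I| - 1$, so $x_v + \sum_{i \in I} x_i + (|I| - 1) x_h \leq |I|$ is a valid inequality that strictly dominates the star inequality.

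For the reverse direction, I would extend the hyperplane argument of (i) to $\mathbb{R}^{|V|}$ by producing, for each $v \in V \setminus S$, an IUC $U_v \ni v$ whose incidence vector satisfies the star inequality with equality. A short case analysis on $v$'s adjacency to $S$ suffices: if $v \in N(h)$ is adjacent to some leaf $i$, take $U_v = \{v, h, i\}$, which induces a triangle; if $v \in N(h)$ has no neighbor in $I$, take $U_v = I \cup \{v\}$, which is an independent set; and if $v \in V \setminus N[h]$, the hypothesis supplies a leaf $i$ non-adjacent to $v$, and $U_v = \{v, h, i\}$ then induces $K_2 \cup K_1$. Any supporting hyperplane $\sum_{u \in V} a_u x_u = b$ containing all the tight incidence vectors from (i) and all $x_{U_v}$ must, by the analysis from (i) restricted to $S$, satisfy $a_i = c$ for $i \in I$, $a_h = (|I| - 1) c$, and $b = |I| c$; substituting each $x_{U_v}$ then forces $a_v = 0$, so the hyperplane is a scalar multiple of the star inequality, as desired. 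The main obstacle is making the case analysis on $v$ exhaustive and verifying that each $U_v$ is a genuine IUC in $G$; this stays tractable because a three-vertex subset is an IUC precisely when its induced subgraph is a triangle, a single edge with an isolated vertex, or edgeless, reducing verification to a direct adjacency check.
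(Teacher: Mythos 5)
Your proposal is correct and follows essentially the same route as the paper: the same validity argument, the same $|I|+1$ tight points ($I$ and $\{h,i\}$ for each leaf) for part (i), the same dominating lifted inequality $\sum_{i\in I}x_i+(|I|-1)x_h+x_v\le|I|$ for the necessity in (ii), and the identical three-way case analysis on $v$'s adjacency (clique $\{v,h,i\}$, independent set $I\cup\{v\}$, or $K_2\cup K_1$ on $\{v,h,i\}$) for the sufficiency. The only difference is presentational: the paper phrases (i) via affine independence and (ii) via vanishing sequential-lifting coefficients, whereas you run a supporting-hyperplane computation throughout, but the combinatorial content is the same.
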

\begin{proof}
Validity of~\eqref{eq:star} is evident. Let $\hat{x}$ be the incidence vector of an arbitrary IUC in $G$. If $\hat{x}_h = 0$, then~\eqref{eq:star} is trivially valid. Otherwise, at most one vertex from $I$ may belong to this IUC, since $\{h,i,j\} \in \Lambda(G), \forall i,j \in I$. 

(i) To prove that~\eqref{eq:star} is facet-defining for $\mathscr{P}_{_{\!\!\mathcal{IUC}}}(G[S])$, it is sufficient to point out that $x_I = \sum_{i \in I} e_i$ and $x_{\{h,i\}} = e_h + e_i, \forall i \in I$, are $|I|+1$ affinely independent points belonging to $\mathcal{F} = \{x \in \mathscr{P}_{_{\!\!\mathcal{IUC}}}(G[S]) \: | \: \sum_{i \in I} x_i + (|I| - 1) x_h = |I|\}$.

(ii) To see the necessity of this condition, note that if there exist two star subgraphs in $G$ induced by $S= \{h\} \cup I$ and $S' = \{v\} \cup I$, for some $v \in V \backslash N[h]$, then the star inequality corresponding to $S$ is dominated by $\sum_{i \in I} x_i + (|I|-1) x_h + x_v \leq |I|$, which is obtained from lifting $x_v$ into~\eqref{eq:star}. 

To prove sufficiency, assume that there does not exist a vertex $v \in V\backslash N[h]$ such that $S' = \{v\} \cup I$ induces a star graph in $G$.
Let $\sum_{v \in V \backslash S} a_v x_v + \sum_{i \in I} x_i + (|I| - 1) x_h \leq |I|$ be the inequality obtained from an arbitrary sequential lifting of~\eqref{eq:star}. We need to show that $a_v = 0, \forall v \in V \backslash S$. In this regard, consider the following cases: 
\begin{itemize}
\item $v \in N(h)$\\
If $v$ is also adjacent to a vertex $i \in I$, then $\{h,i,v\}$ is a clique. Otherwise, $\{v\} \cup I$ is an independent set in $G$.    

\item $v \notin N(h)$\\
Since $\{v\} \cup I$ does not induce a star subgraph, there exists a vertex $i \in I$ not adjacent to $v$. Thus, $G[\{h,i,v\}]$ is the union of an isolated vertex $v$ and an edge $\{h,i\}$.
\end{itemize}
Therefore, there always exists an IUC in $G[S \cup \{v\}]$ containing $v$ whose incidence vector satisfies $\sum_{i \in I} x_i + (|I| - 1) x_h = |I|$. This results implies that $a_v = 0, \forall v \in V \backslash S$, hence~\eqref{eq:star} is facet-defining for $\mathscr{P}_{_{\!\!\mathcal{IUC}}}(G)$ under the theorem's condition.
\end{proof}

It is interesting to note that no further condition, such as maximality of $I$ in the neighborhood of $h$, is required for~\eqref{eq:star} to be facet-defining for $\mathscr{P}_{_{\!\!\mathcal{IUC}}}(G)$. In fact, in absence of a vertex $v \in V \backslash N[h]$ described in the theorem statement, every $I' \subseteq I, |I'| \geq 2$, corresponds to a distinct facet of $\mathscr{P}_{_{\!\!\mathcal{IUC}}}(G)$ defined by~\eqref{eq:star}. In particular, every minimal (inclusion-wise) of such subsets corresponds to a facet-defining OT inequality. Recall that an OT inequality corresponding to $\{h,i,j\}$, for some $\{i,j\} \subseteq I$, is facet-defining for $\mathscr{P}_{_{\!\!\mathcal{IUC}}}(G)$ if and only if $\{h,i,j,v\}, \forall v \in V \backslash \{h,i,j\}$, is not a 4-hole, or equivalently, $\{v,i,j\}$ does not induce a star subgraph.  

The proof of Theorem~\ref{th:star} also declares that if two star subgraphs with the same set of leaves exist in $G$, then the family of star inequalities~\eqref{eq:star} corresponding to each one of them can be lifted to a higher-dimensional space to generate facet-inducing inequalities for the corresponding {\em double-star} (complete bipartite $K_{|I|,2}$) subgraph. The following theorem shows that these double-star inequalities are actually facet-inducing for $\mathscr{P}_{_{\!\!\mathcal{IUC}}}(G)$, which also generalizes the facial property of the 4-hole inequality.
\begin{theorem} \label{th:Dstar} \textbf{\em (Double-star Inequality)}
Let $S_h= \{h\} \cup I$ and $S_u= \{u\} \cup I$ be two sets of vertices inducing star subgraphs in $G$, where $h$ and $u \in V \backslash N[h]$ denote two (non-adjacent) hub vertices, and $I$ is the joint set of leaves. Then, the inequalities
\begin{equation} \label{eq:doublestar}
\sum_{i \in I} x_i + (|I|-1) x_h + x_u \leq |I| 
\text{\quad and \quad}
\sum_{i \in I} x_i + x_h + (|I|-1) x_u \leq |I|
\end{equation}
induce facets of $\mathscr{P}_{_{\!\!\mathcal{IUC}}}(G)$.
\end{theorem}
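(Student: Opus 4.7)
The plan is to establish the three standard ingredients for facet-definingness: validity, the inequality being facet-defining for the restricted polytope $\mathscr{P}_{_{\!\!\mathcal{IUC}}}(G[S_h \cup S_u])$, and zero lifting coefficients for every $v \in V \setminus (S_h \cup S_u)$. The two inequalities in~\eqref{eq:doublestar} are images of each other under exchanging the roles of $h$ and $u$, so it suffices to treat the first one.

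For validity, I would observe that $\{h, i, j\} \in \Lambda(G)$ for every distinct $i, j \in I$, so $x_h = 1$ forces at most one leaf into the IUC; furthermore $\{h, u, i\} \in \Lambda(G)$ for every $i \in I$ (since $h \not\sim u$ while both hubs are adjacent to each leaf), so selecting both hubs forbids all leaves. A short case check on $(x_h, x_u) \in \{0,1\}^2$ then confirms that the left-hand side never exceeds $|I|$. For the restricted facet, I would exhibit $|I|+2$ affinely independent incidence vectors of IUCs at equality, namely $\sum_{i \in I} e_i$ (all leaves), $e_h + e_u$ (both hubs), and $e_h + e_i$ for each $i \in I$; affine independence is straightforward to verify by inspecting the $h$- and $u$-coordinates.

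The crux is the lifting step: for each $v \in V \setminus (S_h \cup S_u)$, I need to produce an IUC of $G$ containing $v$ whose incidence vector attains left-hand side exactly $|I|$, so that the coefficient of $x_v$ in any sequential lifting vanishes. I would split on the adjacency of $v$ to the two hubs. If $v$ is \emph{not} adjacent to both hubs simultaneously, then $G[\{h, u, v\}]$ contains at most one edge, so $\{h, u, v\}$ is an IUC with LHS $= (|I|-1) + 1 = |I|$. If instead $v$ is adjacent to both hubs, I split further on $|I \cap N(v)|$: when this count is at most one, the set $I \cup \{v\}$ remains $P_3$-free (the only way $v$ could introduce a $P_3$ in $G[I \cup \{v\}]$ is by linking two non-adjacent leaves), yielding LHS $= |I|$; when the count is at least two, I pick any $i^* \in I \cap N(v)$ and observe that $\{h, v, i^*\}$ is a triangle in $G$ (using $v \sim h$, $v \sim i^*$, and $h \sim i^*$) and hence an IUC with LHS $= 1 + (|I|-1) = |I|$. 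The main obstacle I anticipate is the bookkeeping needed to confirm that this case split is exhaustive and that each proposed vertex set is genuinely $P_3$-free; once this is done, the standard lifting formula from the preliminaries closes the argument.
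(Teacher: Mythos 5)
Your proposal is correct and follows essentially the same route as the paper: validity, facet-ness on the induced double-star (which the paper gets by lifting $x_u$ into the star inequality rather than by exhibiting the $|I|+2$ affinely independent points directly), and vanishing lifting coefficients obtained by producing, for each external vertex $v$, a tight IUC containing it. Your case split (adjacency of $v$ to both hubs, then $|I\cap N(v)|$) is organized slightly differently from the paper's ($v\in N(h)$ versus $v\notin N(h)$), but the exhibited tight IUCs are the same in substance and every case checks out.
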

\begin{proof}
We present the proof for $\sum_{i \in I} x_i + (|I|-1) x_h + x_u \leq |I|$. The other follows from the symmetry of this structure. To this end, we just need to show that, for every $v \in V \backslash (\{h,u\} \cup I )$, there always exists an IUC containing $v$ in the subgraph $G[\{h,u,v\} \cup I]$ whose incidence vector satisfies $\sum_{i \in I} x_i + (|I|-1) x_h + x_u = |I|$. This result will be sufficient to indicate that the coefficient of a variable $x_v, \forall v \in V \backslash (\{h,u\} \cup I )$ vanishes in every sequential lifting of $\sum_{i \in I} x_i + (|I|-1) x_h + x_u \leq |I|$, proving that this inequality is facet-defining for $\mathscr{P}_{_{\!\!\mathcal{IUC}}}(G)$. 
The existence of such IUCs when $v \in N(h)$ is established by the proof of Theorem~\ref{th:star}. On the other hand, if $v \notin N(h)$, then $\{h,u,v\}$ is an IUC, that is either an independent set or the union of an isolated vertex with two adjacent vertices. This completes the proof.
\end{proof}

It is easy to see that if $G=(S,E),~S = \{h\} \cup I$, is a star graph, then a subset of vertices in $G$ is an IUC if and only if it is a {\em co-1-defective clique}. An $s$-defective clique is a subset of vertices $C$ such that the corresponding induced subgraph contains at least $\binom{|C|}{2} - s$ edges~\cite{pattilloA2013}. Correspondingly, a co-$s$-defective clique is a subset of vertices whose induced subgraph has at most $s$ edges. Sherali and Smith~\cite{sheraliSmith2006} have shown that the family of inequalities defined by~\eqref{eq:star} for all $I' \subseteq I, |I'| \geq 2$, along with the variable bounds, is sufficient to describe the co-1-defective clique polytope associated with a star graph. Given the equivalence of these two structures in a star graph, this result also holds for the IUC polytope associated with $G$, which is presented through the following theorem.
\begin{theorem} \label{th:starfull}
The family of inequalities~\eqref{eq:star} together with the variable bounds provide a complete description for the IUC polytope associated with a star graph $G=(S,E)$.
\end{theorem}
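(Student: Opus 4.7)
The plan is to prove this by establishing two inclusions. Let $Q$ denote the polyhedron in $\mathbb{R}^n$ defined by the bounds $0 \leq x_i \leq 1$ for $i \in S$ together with the star inequalities~\eqref{eq:star} for every $I' \subseteq I$ with $|I'| \geq 2$. The inclusion $\mathscr{P}_{_{\!\!\mathcal{IUC}}}(G) \subseteq Q$ is immediate: Theorem~\ref{th:star} gives validity of each star inequality (noting that $G[\{h\} \cup I']$ is a star subgraph of $G$ whenever $|I'| \geq 2$), and the 0-1 bounds are trivially valid. The substance lies in the reverse inclusion $Q \subseteq \mathscr{P}_{_{\!\!\mathcal{IUC}}}(G)$, for which my plan is to give an explicit constructive convex decomposition of every $\hat{x} \in Q$ into incidence vectors of IUCs.

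Recall that since $G$ is a star, the IUCs are precisely (a) arbitrary subsets $U \subseteq I$ and (b) $\{h\}$ and $\{h,i\}$ for $i \in I$. I would parametrize the sought combination by a weight $\lambda_0$ on $\{h\}$, weights $\{\lambda_i\}_{i \in I}$ on $\{h,i\}$, and weights $\{\mu_U\}_{U \subseteq I}$ on subsets of $I$. Setting $t := \hat{x}_h$, I define $\lambda_i := \max\{0,\hat{x}_i + t - 1\}$ and $\lambda_0 := t - \sum_{i \in I} \lambda_i$; for $t < 1$ I set $q_i := (\hat{x}_i - \lambda_i)/(1-t)$ and distribute the remaining mass $1-t$ over $2^I$ via the product measure $\mu_U := (1-t)\prod_{i \in U} q_i \prod_{i \notin U}(1 - q_i)$. (The boundary case $t=1$ is handled separately by taking $\mu_U = 0$, $\lambda_i = \hat{x}_i$, $\lambda_0 = 1 - \sum_{i \in I}\hat{x}_i$.) Verifying $q_i \in [0,1]$ splits into the cases $\hat{x}_i + t \leq 1$ (so $\lambda_i = 0$ and $q_i = \hat{x}_i/(1-t) \leq 1$) and $\hat{x}_i + t > 1$ (so $q_i = 1$); the $h$-coordinate of the resulting combination is $\lambda_0 + \sum_i \lambda_i = t$, each leaf coordinate is $\lambda_i + (1-t)q_i = \hat{x}_i$, and the weights sum to $t + (1-t) = 1$, all by routine computation.

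The main obstacle is showing $\lambda_0 \geq 0$, i.e., $\sum_{i \in I} \max\{0,\hat{x}_i + t - 1\} \leq t$. Letting $I_+ := \{i \in I : \hat{x}_i + t > 1\}$, this rearranges to $\sum_{i \in I_+} \hat{x}_i \leq |I_+|(1-t) + t$, which for $|I_+| \geq 2$ is exactly the star inequality~\eqref{eq:star} applied to $I_+$. The degenerate cases $|I_+| \in \{0,1\}$ reduce trivially to the bound $\hat{x}_i \leq 1$, and the $t=1$ case similarly follows from the star inequality for $I' = I$ (or from the variable bound when $|I|=1$). Thus the star inequalities enter in precisely the right way to certify feasibility of the decomposition, completing the proof.
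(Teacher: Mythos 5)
Your proof is correct, but it takes a genuinely different route from the paper. The paper does not prove Theorem~\ref{th:starfull} directly at all: it observes that in a star graph an IUC is the same thing as a co-1-defective clique (a vertex set inducing at most one edge) and then simply cites Proposition~3 of Sherali and Smith~\cite{sheraliSmith2006}, which establishes the analogous complete description for the generalized vertex packing polytope of a star. You instead give a self-contained, constructive convex decomposition: writing $t=\hat{x}_h$, you peel off the ``excess'' $\lambda_i=\max\{0,\hat{x}_i+t-1\}$ onto the pair-IUCs $\{h,i\}$, put the remainder of the hub mass on $\{h\}$, and distribute the residual leaf mass $1-t$ over subsets of $I$ via a product measure with marginals $q_i=(\hat{x}_i-\lambda_i)/(1-t)$. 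I checked the details: the marginals come out to $\hat{x}_h$ and $\hat{x}_i$ as claimed, the weights sum to one, $q_i\in[0,1]$ in both cases, and the only nontrivial nonnegativity condition, $\lambda_0\ge 0$, is exactly the star inequality applied to $I_+=\{i:\hat{x}_i+t>1\}$ (with the $|I_+|\le 1$ and $t=1$ cases falling back on the variable bounds or the inequality for $I'=I$). What your approach buys is transparency and an explicit algorithm for decomposing any point of $Q$, together with a precise identification of which inequality certifies which piece of the decomposition; what the paper's approach buys is brevity, at the cost of relying on an external result and on the reader verifying the equivalence of the two combinatorial structures on stars.
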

\begin{proof}
See Proposition 3 in~\cite{sheraliSmith2006} and its proof.
\end{proof}

In~\cite{sheraliSmith2006}, the authors use the term {\em generalized vertex packing} to refer to co-$s$-defective clique. We have used the latter term to avoid confusion with the concept of {\em generalized independent set}, which has also been the subject of some recent studies~\cite{colombiA2017,hosseinianA2019GISP}.

We extend this result to the IUC polytope associated with a (general) tree. Every tree can be seen as a collection of connected stars, whose hub vertices are the internal (branch) nodes of the tree. The following theorem declares that the family of star inequalities~\eqref{eq:star} generated by the star subgraphs of a tree, together with the variable bounds, is sufficient to describe the IUC polytope associated with this graph. 

\begin{theorem} \label{th:tree}
Let $G=(V,E)$ be a tree, and $V_b \subset V$ denote the set of its internal (branch) vertices. Then,
\begin{equation} \label{eq:treefull}
\mathscr{P}_{_{\!\!\mathcal{IUC}}}(G) = \{x \in [0,1]^n \: | \: \sum_{i \in I} x_i + (|I| - 1) x_v \leq |I|,~\forall v \in V_b,~\forall I \subseteq N(v), |I| \geq 2 \}.
\end{equation}
\end{theorem}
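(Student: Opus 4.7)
The plan is to establish both inclusions of~\eqref{eq:treefull}; denote the right-hand side polytope by $Q$. The inclusion $\mathscr{P}_{_{\!\!\mathcal{IUC}}}(G) \subseteq Q$ is immediate from the validity of the star inequality (Theorem~\ref{th:star}). I would prove the reverse inclusion $Q \subseteq \mathscr{P}_{_{\!\!\mathcal{IUC}}}(G)$ by strong induction on the number of internal vertices $|V_b|$. The base cases $|V_b| \leq 1$ are settled directly: when $|V_b| = 0$, the tree has at most two vertices and $Q = [0,1]^n = \mathscr{P}_{_{\!\!\mathcal{IUC}}}(G)$; when $|V_b| = 1$, the tree is a star and the equality is exactly Theorem~\ref{th:starfull}.

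For the inductive step $|V_b| \geq 2$, first note that $V_b$ induces a connected subtree of $G$, since any path between two internal vertices traverses only vertices of degree at least two. Pick an internal vertex $v$ that is a leaf of this induced subtree; then all of $v$'s neighbors except exactly one (call it $u$) are leaves of $G$. Set $L := N(v) \setminus \{u\}$ and $G'' := G - L$, so that $v$ becomes a leaf of $G''$ and $|V_b(G'')| = |V_b(G)| - 1$. The first main step is to show that the projection of $Q$ onto $\mathbb{R}^{V \setminus L}$ coincides with $Q(G'')$. A Fourier-Motzkin elimination of any $x_\ell$, $\ell \in L$, produces combined constraints of the form $\sum_{j \in J} x_j + |J|\, x_v \leq |J|+1$ for $J \subseteq N(v) \setminus \{\ell\}$, $|J| \geq 1$, each of which is implied by the star inequality at $v$ with leaf set $I = J$ (when $|J| \geq 2$) combined with $x_v \leq 1$, and is trivial when $|J| = 1$; conversely, any $\bar{x}'' \in Q(G'')$ extends trivially to a point of $Q$ by setting $\bar{x}_L \equiv 0$. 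By the induction hypothesis, the projection thus equals $\mathscr{P}_{_{\!\!\mathcal{IUC}}}(G'')$.

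The final step is to lift a convex decomposition $\bar{x}|_{V \setminus L} = \sum_k \lambda_k z^k$ of the projected point (with each $z^k$ an IUC indicator of $G''$) to a decomposition of $\bar{x}$ itself into IUC indicators of $G$, by selecting for each $k$ an extension $T_k \subseteq L$. Feasibility of $z^k \cup T_k$ as an IUC of $G$ imposes: $T_k = \emptyset$ if $\{u,v\} \subseteq z^k$; $|T_k| \leq 1$ if $v \in z^k$ but $u \notin z^k$; and $T_k$ is arbitrary if $v \notin z^k$. The marginal conditions $\sum_k \lambda_k\, [\ell \in T_k] = \bar{x}_\ell$ must be met simultaneously for every $\ell \in L$, and I expect verifying this to be the main obstacle. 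The key leverage points should be the freedom to refine the decomposition $\sum_k \lambda_k z^k$ so that the total weight of IUCs containing both $u$ and $v$ is minimized---ideally attaining the lower bound $\max(0,\, \bar{x}_u + \bar{x}_v - 1)$ via a coupling argument exploiting the tree structure of $G''$---together with the star inequalities $\sum_{\ell \in I'} x_\ell + x_u + |I'|\, x_v \leq |I'|+1$ for $I' \subseteq L$, which precisely match the available extension capacity of the decomposition on each subset of leaves.
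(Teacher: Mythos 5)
Your argument is correct, but it takes a genuinely different route from the paper's. The paper also inducts, but one vertex at a time (appending a single leaf $j$ to $G[V\backslash\{j\}]$), and it establishes integrality of the extreme points of the right-hand-side polytope directly: for each pattern of values of $\hat{x}_j$ and $\hat{x}_h$ (with $h$ the parent of $j$) it identifies which star inequalities can be tight and exhibits an explicit $\pm\epsilon$ perturbation writing a fractional candidate as a midpoint of two feasible points, at the cost of a fairly heavy case analysis across the subtrees hanging off $h$. You instead peel off an entire pendant star at once (inducting on $|V_b|$), compute the projection onto $V\setminus L$ by Fourier--Motzkin, and lift a convex decomposition back up; this trades the case analysis for a single fractional-assignment problem and makes transparent \emph{why} the star inequalities suffice: they are exactly the Hall-type feasibility conditions for distributing the leaf masses $\bar{x}_\ell$ over the decomposition. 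The step you flag as the main obstacle does go through. First, the weight $\nu$ of IUCs of $G''$ containing both $u$ and $v$ can always be driven down to $\max(0,\bar{x}_u+\bar{x}_v-1)$ by repeatedly exchanging mass: move $v$ from an IUC containing $\{u,v\}$ to one containing neither, which is legal because $v$ is pendant in $G''$ and so joins any $u$-free IUC as an isolated vertex, and this exchange is available until either $\nu=0$ or no $\{u,v\}$-free IUC remains. Second, with $W_1=1-\bar{x}_v$ the capacity of the $v$-free part and $W_2=\bar{x}_v-\nu$ that of the part containing $v$ but not $u$, feasibility of the marginals reduces to $\sum_{\ell\in S}(\bar{x}_\ell-W_1)\leq W_2$ for $S=\{\ell\in L:\bar{x}_\ell>W_1\}$, which is precisely the star inequality at $v$ with $I=S$ when $\bar{x}_u+\bar{x}_v\leq 1$ and with $I=S\cup\{u\}$ otherwise (and is trivial when $|I|\leq 1$). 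So the plan is sound and, in my view, cleaner than the paper's proof; to count as a complete proof you still need to write out these two verifications and the routine realization of the marginals by splitting each $z^k$ into weighted copies $z^k\cup T$, $T\subseteq L$.
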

\begin{proof}
We show that every extreme point of the polytope defined by the inequalities in~\eqref{eq:treefull} is integral. The proof is by induction. The base case is a star graph, for which the result holds by Theorem~\ref{th:starfull}. Patently, every tree $G=(V,E)$ is constructed by appending a leaf vertex $j \in V$ to the subtree $G[V \backslash \{j\}]$. So, suppose that the result holds for every induced subtree of $G[V \backslash \{j\}]$; we show that it also holds for $G$. Let $h$ denote the parent of $j$, i.e., the internal vertex adjacent to $j$, in $G$, and without loss of generality, assume that $G$ is rooted at $h$. Furthermore, let $N(h) \backslash \{j\} = \{r_k,~\forall k \in \{1,\ldots,|N(h)|-1\}\}$, and $G^{k},~\forall k \in \{1,\ldots,|N(h)|-1\}\}$, denote the subtree of $G$ rooted at $r_k$ together with $h$ as a leaf vertex. We present the proof for the case that $h$ is an internal vertex of $G[V \backslash \{j\}]$; the case that $h$ is a leaf vertex in $G[V \backslash \{j\}]$ becomes trivial after presenting this (more general) result, as it corresponds to $k \in \{1\}$. 

Let $\hat{x}$ be an arbitrary extreme point of the polytope defined by the inequalities in~\eqref{eq:treefull}, denoted by $\mathscr{P}$. Evidently, if $\hat{x}_j = 0$ or $\hat{x}_h = 0$, the result trivially holds, by the induction hypothesis. Besides, $\hat{x}_j = \hat{x}_h = 1$ implies $\hat{x}_{r_k} = 0,~\forall k \in \{1,\ldots,|N(h)|-1\}\}$, which brings about the same result. We show that fractionality of $\hat{x}_j$ or $\hat{x}_h$ contradicts the extremity of $\hat{x}$ for each one of the remaining three possibilities:
\begin{itemize}
\item[(i)] $\hat{x}_h = 1$~~and~~$\hat{x}_j \in (0,1)~~\Rightarrow~~\hat{x}_{r_k} \neq 1,~\forall k \in \{1,\ldots,|N(h)|-1\}\}$\\
Consider the facet $\mathscr{P}_{(x_h = 1)} = \{x \in \mathscr{P} \: | \: x_h = 1\}$. Observe that, ${x}_h = 1$ reduces the entire set of star inequalities~\eqref{eq:star} associated with the star subgraph $G[\{h\} \cup N(h)]$ to 
$$
x_j + \sum_{k=1}^{|N(h)|-1} x_{r_k} \leq 1, 
$$
in the description of $\mathscr{P}_{(x_h = 1)}$, and every other inequality in the description of this facet uniquely appears in the description of the polytope $\mathscr{P}_{(x_h = 1)}^{k} = \{x \in \mathscr{P}_{_{\!\!\mathcal{IUC}}}(G^{k}) \: | \: x_h = 1\}$, for some $k \in \{1,\ldots,|N(h)|-1\}\}$, by the induction hypothesis. Note that, if $\hat{x}_{r_k} = 0$, for some $k \in \{1,\ldots,|N(h)|-1\}\}$, we may exclude the subtree rooted at $r_k$ from the graph; so, without loss of generality, we assume $\hat{x}_{r_k} \in (0,1),~\forall k \in \{1,\ldots,|N(h)|-1\}\}$. Let $V^1 = V (G^{1})$ denote the vertex set of $G^{1}$, and consider the point $\tilde{x} \in \mathbb{R}^{|V^1|}$ given by $\tilde{x}_i = \hat{x}_i,~\forall i \in V^1$. By the induction hypothesis, $\tilde{x}$ is not an extreme point of $\mathscr{P}_{(x_h = 1)}^{1} = \{x \in \mathscr{P}_{_{\!\!\mathcal{IUC}}}(G^{1}) \: | \: x_h = 1\}$. Hence, there always exist two distinct points $\tilde{x}^+ \in \mathscr{P}_{(x_h = 1)}^{1}$ and $\tilde{x}^- \in \mathscr{P}_{(x_h = 1)}^{1}$ such that
$\tilde{x}^+_{r_1} = \tilde{x}_{r_1} + \epsilon$ and $\tilde{x}^-_{r_1} = \tilde{x}_{r_1} - \epsilon$, for some $\epsilon \geq 0$, and $\tilde{x} = \frac{1}{2} (\tilde{x}^+ + \tilde{x}^-)$. Notice that $\tilde{x}^+$ and $\tilde{x}^-$ are always two distinct points, and $\epsilon = 0$ corresponds to the event that they differ in a coordinate other than $x_{r_1}$. This immediately implies that $\hat{x}$ can always be written as a convex combination of two distinct points $\hat{x}^+ \in \mathscr{P}_{(x_h = 1)}$ and $\hat{x}^- \in \mathscr{P}_{(x_h = 1)}$ given by
$$
\begin{aligned}
&\hat{x}^+_{j} = \hat{x}_{j} - \epsilon;~~\hat{x}^+_{h} = 1;~~\hat{x}^+_{r_1} = \hat{x}_{r_1} + \epsilon,~~\hat{x}^+_{i} = \tilde{x}^+_{i},\forall i \in V^1 \backslash \{h,r_1\};~~\hat{x}^+_{i} = \hat{x}_{i},\forall i \in V \backslash (\{j\} \cup V^1),\\
&\hat{x}^-_{j} = \hat{x}_{j} + \epsilon;~~\hat{x}^-_{h} = 1;~~\hat{x}^-_{r_1} = \hat{x}_{r_1} - \epsilon,~~\hat{x}^-_{i} = \tilde{x}^-_{i},\forall i \in V^1 \backslash \{h,r_1\};~~\hat{x}^-_{i} = \hat{x}_{i},\forall i \in V \backslash (\{j\} \cup V^1),
\end{aligned}
$$
which contradicts the extremity of $\hat{x}$.
\item[(ii)] $\hat{x}_h \in (0,1)$~~and~~$\hat{x}_j = 1~~\Rightarrow~~\hat{x}_{r_k} \neq 1,~\forall k \in \{1,\ldots,|N(h)|-1\}\}$\\
As before, we assume $\hat{x}_{r_k} \in (0,1),~\forall k \in \{1,\ldots,|N(h)|-1\}\}$. In this case, the set of star inequalities~\eqref{eq:star} associated with the star subgraph $G[\{h\} \cup N(h)]$ reduces to 
\begin{equation} \label{eq:hstar}
\sum_{i \in I} x_i + |I| x_h \leq |I|,~\forall I \subseteq N(h) \backslash \{j\}, |I| \geq 1.  
\end{equation}
Notice that every inequality not involving $x_j$ in this set is redundant in the description of $\mathscr{P}_{(x_j = 1)} = \{x \in \mathscr{P} \: | \: x_j = 1\}$, and~\eqref{eq:hstar} is due to substituting $x_j = 1$ in the inequalities involving this variable. Besides, every other inequality in the description of $\mathscr{P}_{(x_j = 1)}$ uniquely appears in the description of $\mathscr{P}_{_{\!\!\mathcal{IUC}}}(G^{k})$, for some $k \in \{1,\ldots,|N(h)|-1\}\}$, by the induction hypothesis. Observe that every two points $\hat{x}^+ \in \mathbb{R}^n$ and $\hat{x}^- \in \mathbb{R}^n$ that satisfy
\begin{equation} \label{eq:hstar2}
\begin{aligned}
&\hat{x}^{+}_h = \hat{x}_h + \epsilon;~~\hat{x}^{+}_{r_k} = \hat{x}_{r_k} - \epsilon,\forall k \in \{1,\ldots,|N(h)|-1\},\\
&\hat{x}^{-}_h = \hat{x}_h - \epsilon;~~\hat{x}^{-}_{r_k} = \hat{x}_{r_k} + \epsilon,\forall k \in \{1,\ldots,|N(h)|-1\},
\end{aligned}
\end{equation}
for some $\epsilon \geq 0$, will also satisfy all inequalities in~\eqref{eq:hstar}. We aim to show that, under the induction assumptions, $\hat{x}$ can always be written as $\hat{x} = \frac{1}{2} (\hat{x}^+ + \hat{x}^-)$ for two distinct points $\hat{x}^+ \in \mathbb{R}^n$ and $\hat{x}^- \in \mathbb{R}^n$ that, in addition to~\eqref{eq:hstar2} and $\hat{x}^+_{j} = \hat{x}^-_{j} = \hat{x}_{j} = 1$, satisfy all defining inequalities of $\mathscr{P}_{_{\!\!\mathcal{IUC}}}(G^{k}),~\forall k \in \{1,\ldots,|N(h)|-1\}\}$. Clearly, we just need to present the proof for $\mathscr{P}_{_{\!\!\mathcal{IUC}}}(G^{1})$, as $x_h$ is the only variable that the defining inequalities of $\mathscr{P}_{_{\!\!\mathcal{IUC}}}(G^{k}),~\forall k \in \{1,\ldots,|N(h)|-1\}\}$, share. To this end, it is sufficient to show that the point $\tilde{x} \in \mathbb{R}^{|V^1|}$ given by $\tilde{x}_i = \hat{x}_i,~\forall i \in V^1$, is not an extreme point of the polytope
$$
\widetilde{\mathscr{P}} = \mathscr{P}_{_{\!\!\mathcal{IUC}}}(G^{1})~\cap~\{x \in \mathbb{R}^{|V^1|} \: | \: x_{r_1} + x_h = \tilde{x}_{r_1} + \tilde{x}_h \}.
$$
By the induction hypothesis, $\tilde{x}$ is not an extreme point of $\mathscr{P}_{_{\!\!\mathcal{IUC}}}(G^{1})$. Therefore, if it is an extreme point of $\widetilde{\mathscr{P}}$, it must lie on the interior of an edge of $\mathscr{P}_{_{\!\!\mathcal{IUC}}}(G^{1})$, and can be written as
$
\tilde{x} = \lambda x^{U_1} + (1-\lambda) x^{U_2},~ \lambda \in (0,1),
$
where $x^{U_1}$ and $x^{U_2}$ are the incidence vectors of two distinct IUCs in $G^1$. Since $\tilde{x}_{r_1}$ and $\tilde{x}_{h}$ are not integral, $x^{U_1}$ and $x^{U_2}$ must satisfy one of the following conditions:
\begin{itemize}
\item[(a)] $x^{U_1}_{r_1} = x^{U_2}_h = 1$~~and~~$x^{U_2}_{r_1} = x^{U_1}_h = 0$\\
Thus, $\tilde{x}_{r_1} = \lambda$, $\tilde{x}_h = 1- \lambda$, and $\tilde{x}_{r_1} + \tilde{x}_h = 1$. This implies that every star inequality~\eqref{eq:star} involving $x_h$, associated with the star subgraph $G^1[\{r_1\} \cup N(r_1)]$, is redundant in the description of $\widetilde{\mathscr{P}}$, as it can be written as a linear combination of $x_{r_1} + x_h \leq 1$ and another star inequality, associated with the subgraph $G^1[S],~S = \{r_1\} \cup (N(r_1)  \backslash \{h\})$, or a variable bound. Consider the point $\bar{x} \in \mathbb{R}^{|V^1|-1}$ given by $\bar{x}_i = \tilde{x}_i,~\forall i \in V^1 \backslash \{h\}$. By the induction hypothesis, $\bar{x}$ can always be written as a convex combination of two distinct points $\bar{x}^+ \in \mathscr{P}_{_{\!\!\mathcal{IUC}}}(G^{1}[V^1 \backslash \{h\}])$ and $\bar{x}^- \in \mathscr{P}_{_{\!\!\mathcal{IUC}}}(G^{1}[V^1 \backslash \{h\}])$ such that $\bar{x}^+_{r_1} = \bar{x}_{r_1} + \epsilon$ and $\bar{x}^-_{r_1} = \bar{x}_{r_1} - \epsilon$, for some $\epsilon \geq 0$. Since ${x}_{r_1} + {x}_h = 1$ is the only essential inequality involving $x_h$ in the description $\widetilde{\mathscr{P}}$, this result immediately implies that $\bar{x}$ can always be written as a convex combination of two distinct points $\tilde{x}^+ \in \widetilde{\mathscr{P}}$ and $\tilde{x}^- \in \widetilde{\mathscr{P}}$, given by
\begin{equation} \label{eq:point}
\begin{aligned}
&\tilde{x}^+_h = \tilde{x}_h - \epsilon;~~\tilde{x}^+_{r_1} = \tilde{x}_{r_1} + \epsilon;~~\tilde{x}^+_i = \bar{x}^+_i,~\forall i \in V^1 \backslash \{h,r_1\},\\
&\tilde{x}^-_h = \tilde{x}_h + \epsilon;~~\tilde{x}^-_{r_1} = \tilde{x}_{r_1} - \epsilon;~~\tilde{x}^-_i = \bar{x}^-_i,~\forall i \in V^1 \backslash \{h,r_1\},
\end{aligned}
\end{equation}
which contradicts the extremity of $\tilde{x}$. 
\item[(b)] $x^{U_1}_{r_1} = x^{U_1}_h = 1$~~and~~$x^{U_2}_{r_1} = x^{U_2}_h = 0~~\Rightarrow~~x^{U_1}_i = 0,~\forall i \in N(r_1) \backslash \{h\}$\\
Thus, $\tilde{x}_{r_1} = \tilde{x}_h = \lambda$ and $\tilde{x}_i \in \{0,1-\lambda\},~\forall i \in N(r_1) \backslash \{h\}$. This implies that no star inequality~\eqref{eq:star} involving $x_h$, associated with the star subgraph $G^1[\{r_1\} \cup N(r_1)]$, can be binding at $\tilde{x}$, because otherwise the equality 
$$
\sum_{i \in I} x_i + x_h + |I| x_{r_1} = |I|+1,~ I \subseteq N(r_1) \backslash \{h\},~|I| \geq 1,
$$
leads to $\lambda = 1$. Similar to (a), this result further implies the existence of two distinct points $\tilde{x}^+ \in \widetilde{\mathscr{P}}$ and $\tilde{x}^- \in \widetilde{\mathscr{P}}$, defined in~\eqref{eq:point} for a small enough $\epsilon$, which contradicts the extremity of $\tilde{x}$.
\end{itemize}
\item[(iii)] $\hat{x}_h \in (0,1)$~~and~~$\hat{x}_j \in (0,1)$\\
Consider the following possibilities:
\begin{itemize}
\item[(a)] $\hat{x}_h + \hat{x}_j > 1~~\Rightarrow~~\hat{x}_{r_k} \in [0,1),~\forall k \in \{1,\ldots,|N(h)|-1\}$\\
In this case, no star inequality~\eqref{eq:star} associated with the star subgraph $G[S],~S = \{h\} \cup (N(h) \backslash \{j\})$, can be tight at $\hat{x}$, because otherwise $\hat{x}$ violates a star inequality involving $x_j$. Therefore, any star inequality~\eqref{eq:star} associated with the star subgraph $G[\{h\} \cup N(h)]$ binding at $\hat{x}$ is of the form 
$$
\sum_{i \in I} x_i + x_j + |I| x_h \leq |I|+1,~\forall I \subseteq N(h) \backslash \{j\}, |I| \geq 1,  
$$
and every other inequality in the description of $\mathscr{P}$ uniquely appears in the description of $\mathscr{P}_{_{\!\!\mathcal{IUC}}}(G^{k})$, for some $k \in \{1,\ldots,|N(h)|-1\}\}$, by the induction hypothesis. The rest of the proof is identical to (ii), except that here we have $\hat{x}^+_{j} = \hat{x}^-_{j} = \hat{x}_{j} \neq 1$.
\item[(b)] $\hat{x}_h + \hat{x}_j = 1$\\
If $\hat{x}$ is an extreme point of $\mathscr{P}$, it must also be an extreme point of the polytope
$$
\widetilde{\mathscr{P}} = \mathscr{P}~\cap~\{x \in \mathbb{R}^n \: | \: x_{h} + x_j = 1 \}.
$$
Then, an argument similar to (ii).(a) will lead to a contradiction with the extremity of $\hat{x}$.
\item[(c)] $\hat{x}_h + \hat{x}_j < 1$\\
This implies that no star inequality~\eqref{eq:star} involving $x_j$, associated with the star subgraph $G[\{h\} \cup N(h)]$,
can be binding at $\hat{x}$, because otherwise $\hat{x}$ violates a star inequality~\eqref{eq:star} associated with the star subgraph $G[S],~S=\{h\} \cup (N(h) \backslash \{j\})$. Since the other inequalities in the description of $\mathscr{P}$ are the inequalities defining $\mathscr{P}_{_{\!\!\mathcal{IUC}}}(G[V \backslash \{j\}])$, by the induction hypothesis, this is a blatant contradiction with the extremity of $\hat{x}$.
\end{itemize}
\end{itemize}
The proof is complete.
\end{proof}

Given a graph $G$, a subset of vertices is called a {\em $k$-dependent set} if the degree of every vertex in the corresponding induced subgraph is at most $k$. It is evident that IUC and $1$-dependent set are equivalent in trees. Hence, the foregoing result also holds for the $1$-dependent set polytope associated with a tree, which we present through the following corollary. We should also point out that the {\sc Maximum $k$-dependent Set} problem, which is to find a maximum-cardinality $k$-dependent set in $G$, is linear-time solvable on trees, for every $k \geq 1$~\cite{DessmarkA93}.

\begin{corollary}
Let $G$ be a tree. Then, the family of star inequalities~\eqref{eq:star} for all star subgraphs of $G$, together with the variable bounds, is sufficient to describe the 1-dependent set polytope associated with $G$.
\end{corollary}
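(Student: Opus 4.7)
The corollary should follow almost immediately from Theorem~\ref{th:tree} once one verifies that the two combinatorial notions coincide on trees. The plan is therefore to prove a single bijection statement and then quote Theorem~\ref{th:tree}.

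First I would observe that a tree $G$ contains no triangle (in fact, no cycle at all), so every clique in $G$ has at most two vertices. Consequently, any cluster subgraph induced by a subset $U\subseteq V$ is a disjoint union of isolated vertices and isolated edges, which is exactly the condition that every vertex in $G[U]$ has degree at most one; in other words, $U$ is a $1$-dependent set. Conversely, if $U$ is a $1$-dependent set then $G[U]$ is a graph of maximum degree one, hence a disjoint union of $K_1$'s and $K_2$'s, and is therefore a cluster graph, so $U$ is an IUC. This establishes that the families of IUCs and $1$-dependent sets in $G$ coincide.

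From this equivalence it follows that the $1$-dependent set polytope associated with $G$ equals $\mathscr{P}_{_{\!\!\mathcal{IUC}}}(G)$, because both are convex hulls of the incidence vectors of the same collection of subsets of $V$. Theorem~\ref{th:tree} then provides a complete description of $\mathscr{P}_{_{\!\!\mathcal{IUC}}}(G)$ as the set of points in $[0,1]^n$ satisfying the star inequalities~\eqref{eq:star} over all internal vertices $v\in V_b$ and all subsets $I\subseteq N(v)$ with $|I|\geq 2$, and each such pair $(v,I)$ precisely corresponds to a star subgraph of $G$. Hence the family of star inequalities associated with the star subgraphs of $G$, together with the variable bounds, is sufficient to describe the $1$-dependent set polytope.

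There is essentially no obstacle here: the content of the corollary is entirely carried by Theorem~\ref{th:tree}, and the only new ingredient is the trivial structural remark that in a triangle-free graph the cluster-subgraph condition collapses to the degree-at-most-one condition. I would therefore keep the write-up to a few sentences.
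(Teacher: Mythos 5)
Your argument is correct and is exactly the paper's reasoning: the paper justifies the corollary by remarking that IUC and $1$-dependent set are equivalent in trees (since a tree is triangle-free, an induced cluster subgraph is precisely a disjoint union of vertices and edges) and then invoking Theorem~\ref{th:tree}. You have merely spelled out the equivalence in slightly more detail; no difference in approach.
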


We also show that the family of inequalities given by~\eqref{eq:doublestar} plays the same role in description of the IUC polytope associated with a complete bipartite graph $G = K_{|I|,|J|}$. The class of complete bipartite graphs is of a special interest because it has been shown by Pyatkin et al.~\cite{pyatkinA2018} that a complete bipartite graph $K_{\lfloor \frac{n}{2} \rfloor, \lceil \frac{n}{2} \rceil }$ on $n$ vertices accommodates the maximum possible number of open triangles among all the graphs with the same number of vertices.
\begin{theorem} \label{th:CB}
Let $G=(V,E)$ be a complete bipartite graph, where the bipartition of vertices is given by $V = I \cup J$. Then, a complete description of the IUC polytope associated with $G$ is given by the family of inequalities~\eqref{eq:doublestar} for all double-star subgraphs of $G$ together with the variable bounds.
\end{theorem}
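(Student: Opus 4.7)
The plan is to show that the polytope $Q$ defined by the inequalities~\eqref{eq:doublestar} together with the variable bounds coincides with $\mathscr{P}_{_{\!\!\mathcal{IUC}}}(G)$. Validity of the defining inequalities is provided by Theorem~\ref{th:Dstar}, so the task reduces to showing that every extreme point of $Q$ is the incidence vector of an IUC. I would proceed by induction on $|V|$, in the spirit of Theorem~\ref{th:tree}.

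The key structural observation is that IUCs in $K_{p,q}$ admit a very constrained characterization: a set $U \subseteq V$ is an IUC if and only if $U \subseteq I$, $U \subseteq J$, or $|U \cap I|=|U \cap J|=1$. In particular, every integer point of $Q$ is automatically an IUC incidence vector, because any $0$--$1$ vector violating this bimodal structure violates the 4-hole inequality (a special case of~\eqref{eq:doublestar} with $|L|=2$) on some two vertices from each side. For the inductive step, let $\hat{x}$ be an extreme point of $Q(K_{p,q})$. If $\hat{x}_v=0$ for some vertex $v$, I would verify that the face $\{x \in Q(K_{p,q}): x_v=0\}$ coincides with $\mathscr{P}_{_{\!\!\mathcal{IUC}}}(K_{p,q}-v)$: each double-star inequality of $K_{p,q}$ involving $v$ reduces, after substituting $x_v=0$, to either a trivial inequality, a star inequality, or a constraint dominated by a double-star of $K_{p,q}-v$ together with $x_h \leq 1$; the star inequalities in turn are implied by double-star inequalities when $K_{p,q}-v$ still has both sides of size at least $2$, and form the complete description otherwise by Theorem~\ref{th:starfull}. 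Thus $\hat{x}|_{V \backslash \{v\}}$ is an extreme point of $\mathscr{P}_{_{\!\!\mathcal{IUC}}}(K_{p,q}-v)$ and is integer by the inductive hypothesis.

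The remaining cases have $\hat{x}_v>0$ for all $v$. If furthermore $\hat{x}_v<1$ for all $v$, no variable bounds are tight, so extremity requires $|V|$ linearly independent tight double-star constraints. The crucial orthogonality property is that every double-star coefficient vector $(|L|-1)e_h+e_u+\mathbf{1}_L$ is orthogonal to $w:=\mathbf{1}_I-\mathbf{1}_J$: for hubs $h,u \in I$ and leaves $L \subseteq J$ the inner product telescopes to $(|L|-1)+1-|L|=0$, and a symmetric calculation applies for hubs in $J$. Hence the rank of any family of tight double-star constraints is at most $|V|-1$, contradicting extremity.

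The final case is that some $\hat{x}_{w}=1$ (WLOG $w=i^* \in I$) while $\hat{x}_v \in (0,1)$ for $v \neq i^*$. Here the 4-hole inequalities force $\hat{x}_u+\hat{x}_j+\hat{x}_{j'} \leq 1$ for all $u \in I \backslash \{i^*\}$ and distinct $j,j' \in J$, and the larger double-stars with hubs $\{i^*,u\}$ and leaves $L$ give $\sum_{j \in L}\hat{x}_j+\hat{x}_u \leq 1$. A pairing argument matches every such tight constraint with its twin obtained by swapping the roles of $i^*$ and $u$ as major and minor hubs: for any $|L| \geq 3$, a short algebraic check shows that simultaneous tightness of the two twins forces $\hat{x}_u=1$, contradicting the assumption. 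Combined with a rank analysis restricted to the face $\{x_{i^*}=1\}$, this rules out extremity without additional integer coordinates. The main technical obstacle lies in this final case: one must classify all configurations of tight constraints (with $i^*$ appearing as major hub, minor hub, or leaf) and verify that none yields the required number of linearly independent relations on $V \backslash \{i^*\}$.
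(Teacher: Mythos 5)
Your overall skeleton matches the paper's: validity plus integrality of all extreme points of the candidate polytope, split according to which coordinates are $0$, fractional, or $1$. Two of your three cases are sound. The reduction to $K_{p,q}-v$ when some $\hat{x}_v=0$ is essentially the domination analysis the paper carries out for the face $\mathscr{P}_{(x_{V_0}=0)}$, and your all-fractional case is actually a nicer packaging of the paper's argument: the paper perturbs by $\pm\epsilon$ along $\mathbf{1}_I-\mathbf{1}_J$, which works precisely because of the orthogonality you identify, and your rank-deficiency phrasing makes the mechanism transparent.

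The gap is the case with a coordinate at value $1$, and it is a real one, not a routine verification. First, your trichotomy silently assumes at most one coordinate equals $1$; you must also dispose of $|V_1|\geq 2$ (quick, but needed: two ones on the same side force all of the other side to $0$ via a double-star with $|J'|=2$, and one on each side forces everything else to $0$ via a $4$-hole, both contradicting fractionality — this is the paper's Case 3). Second, and more seriously, your treatment of the single-one case stops at a pairing observation about ``twin'' constraints and an unexecuted ``rank analysis restricted to the face $\{x_{i^*}=1\}$''; you yourself flag the classification of tight configurations as ``the main technical obstacle.'' That classification is exactly where the work lies, because with $x_{i^*}=1$ the tight set can mix constraints in which $i^*$ appears as major hub, minor hub, and leaf, and the twin-elimination alone does not bound the rank. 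The paper closes this case differently and completely: it shows by explicit domination that on the face $x_{i^*}=1$, $x_{V_0}=0$ \emph{every} essential inequality collapses to $\sum_{j\in J} x_j + x_{u} \leq 1$ for $u\in I\setminus(V_0\cup\{i^*\})$ (in your orientation), after which the perturbation $e_{j'}-e_{j''}$ on two fractional coordinates of the leaf side preserves all constraints and kills extremity. Until you either carry out that domination argument or complete the rank classification you sketch, the proof is not finished.
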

\begin{proof}
We assume $|I|, |J| \geq 2$ because otherwise $G$ is a star graph. Let $\mathscr{P}$ be the polytope defined by the family of inequalities~\eqref{eq:doublestar} for all double-star subgraphs of $G$ together with the variable bounds:
\begin{subequations} \label{eq:CBfull}
\begin{align} 
\mathscr{P}= \{ x \in \mathbb{R}^n \: |~& 0 \leq x_v \leq 1, \; \forall v \in V; \label{eq:CBfull_e} \\
\nonumber&\forall \{j',j''\} \subseteq J,~\forall I' \subseteq I, |I'| \geq 2:\\
&\hspace{6mm}\sum_{i \in I'} x_i + (|I'|-1) x_{j'} + x_{j''} \leq |I'|, \label{eq:CBfull_a}\\
&\hspace{6mm}\sum_{i \in I'} x_i + x_{j'} + (|I'|-1) x_{j''} \leq |I'|;\label{eq:CBfull_b}\\
\nonumber&\forall \{i',i''\} \subseteq I,~\forall J' \subseteq J, |J'| \geq 2: \\
&\hspace{6mm}\sum_{j \in J'} x_j + (|J'|-1) x_{i'} + x_{i''} \leq |J'|, \label{eq:CBfull_c}\\
&\hspace{6mm}\sum_{j \in J'} x_j + x_{i'} + (|J'|-1) x_{i''} \leq |J'|\}.\label{eq:CBfull_d}
\end{align}
\end{subequations}
We aim to show that $\mathscr{P}_{_{\!\!\mathcal{IUC}}}(G) = \mathscr{P}$. Validity of the set of inequalities~\eqref{eq:CBfull} for the IUC polytope associated with $G$ indicates that $\mathscr{P}_{_{\!\!\mathcal{IUC}}}(G) \subseteq \mathscr{P}$. In order to prove that the reverse also holds, we show that every extreme point of $\mathscr{P}$ is integral, hence it is the incidence vector of an IUC in $G$, which implies $\mathscr{P} \subseteq \mathscr{P}_{_{\!\!\mathcal{IUC}}}(G)$.
The proof is by contradiction. Let $\hat{x}$ be a fractional extreme point of $\mathscr{P}$, and define $V_0 = \{v \in V \: | \: \hat{x}_v = 0\}$ and $V_1 = \{v \in V \: | \: \hat{x}_v = 1\}$. First, observe that every inequality in~\eqref{eq:CBfull} involving $x_v$, for some $v \in V_0$, is redundant in description of the face $\mathscr{P}_{(x_{V_0} = 0)} = \{x \in \mathscr{P} \: | \: x_v = 0, \forall v \in V_0\}$. To see this, consider a fixed vertex $\tilde{j} \in J \cap V_0$. If $|J| = 2$, i.e., $J = \{\tilde{j},j'\}$, then~\eqref{eq:CBfull} reduces to 
$$
\begin{aligned}
\mathscr{P}=\{ x \in \mathbb{R}^n \: |~&0 \leq x_{j'} \leq 1;~~0 \leq x_i \leq 1,  \forall i \in I;\\
& \sum_{i \in I'} x_i + (|I'|-1) x_{j'} \leq |I'|, \forall I' \subseteq I, |I'| \geq 2\},
\end{aligned}
$$
which is precisely the description of the IUC polytope associated with the star subgraph $G[\{j'\} \cup I]$. If $|J| \geq 3$, then each inequality of type~\eqref{eq:CBfull_a}-\eqref{eq:CBfull_b} corresponding to $\{\tilde{j},j'\}$ is dominated by an inequality of the same type corresponding to $\{j',j''\},\forall j'' \in J \backslash \{\tilde{j},j'\}$. Besides, each inequality of type~\eqref{eq:CBfull_c}-\eqref{eq:CBfull_d} involving $x_{\tilde{j}}$ can be written as a linear combination of $x_{i'} \leq 1$ (or $x_{i''} \leq 1$) and another inequality of the same type corresponding to $J' \backslash \{\tilde{j}\}$. Therefore,
\begin{subequations} \label{eq:V0}
\begin{align}
\mathscr{P}_{(x_{V_0} = 0)} = \{ x \in \mathbb{R}^n \: |~&  x_v = 0,~\forall v \in V_0; \\
&0 \leq x_v \leq 1,~\forall v \in V \backslash V_0; \label{eq:V0_e}\\
\nonumber &\forall \{j',j''\} \subseteq J \backslash V_0,~\forall I' \subseteq I \backslash V_0, |I'| \geq 2: \\
&\;\;\;\; \;\; \sum_{i \in I'} x_i + (|I'|-1) x_{j'} + x_{j''} \leq |I'|, \label{eq:V0_a}\\
&\;\;\;\; \;\; \sum_{i \in I'} x_i + x_{j'} + (|I'|-1) x_{j''} \leq |I'|;\label{eq:V0_b}\\
\nonumber &\forall \{i',i''\} \subseteq I \backslash V_0,~\forall J' \subseteq J \backslash V_0, |J'| \geq 2: \\
&\;\;\;\; \;\; \sum_{j \in J'} x_j + (|J'|-1) x_{i'} + x_{i''} \leq |J'|, \label{eq:V0_c}\\
&\;\;\;\; \;\; \sum_{j \in J'} x_j + x_{i'} + (|J'|-1) x_{i''} \leq |J'|\}.\label{eq:V0_d}
\end{align}
\end{subequations}
Now, consider the following three possibilities for the cardinality of $V_1$: $|V_1| = 0,$ $|V_1| = 1$, and $|V_1| \ge2$.
\begin{itemize}
\item \textsc{case 1}: $|V_1| = 0$ ($\hat{x}_v$ is fractional for every $v \in V \backslash V_0$.)\\
Then, there exists a sufficiently small $\epsilon > 0$ for which $\hat{x}^+$ and $\hat{x}^-$ with
$$
\begin{aligned}
&\hat{x}^+_i = \hat{x}_i + \epsilon, \forall i \in I \backslash V_0;~\hat{x}^+_j = \hat{x}_j - \epsilon, \forall j \in J \backslash V_0;~\hat{x}^+_v = \hat{x}_v, \forall v \in V_0  \\
&\hat{x}^-_i = \hat{x}_i - \epsilon, \forall i \in I \backslash V_0;~\hat{x}^-_j = \hat{x}_j + \epsilon, \forall j \in J \backslash V_0;~\hat{x}^-_v = \hat{x}_v, \forall v \in V_0
\end{aligned}
$$
belong to $\mathscr{P}_{(x_{V_0} = 0)} \subseteq \mathscr{P}$. Since $\hat{x} = \frac{1}{2}(\hat{x}^+ + \hat{x}^-)$, this contradicts extremity of $\hat{x}$.  
\item \textsc{case 2}: $|V_1|=1$\\
Without loss of generality, assume $V_1 = \{\tilde{j}\}$, for some $\tilde{j}\in J$. Note that, in description of the face $\mathscr{P}_{(x_{V_0} = 0, x_{\tilde{j}}=1)}$, every inequality in~\eqref{eq:V0_b} corresponding to $\{\tilde{j},j'\}, j' \in J \backslash (V_0 \cup \{\tilde{j}\})$, is dominated by the corresponding inequality of type~\eqref{eq:V0_a}, as $x_{j'} \leq 1$. Furthermore, each inequality of type~\eqref{eq:V0_a} involving $x_{\tilde{j}}$ and corresponding to a proper subset of $I$ is in turn dominated by the one corresponding to $I$ itself, i.e., 
\begin{equation} \label{eq:Iitself}
\sum_{i \in I} x_i + x_{j'} \leq 1,~\forall j' \in J \backslash (V_0 \cup \{\tilde{j}\}).
\end{equation}
Now, it is clear that every inequality of type~\eqref{eq:V0_a}-\eqref{eq:V0_b} corresponding to $\{j',j''\} \subseteq J \backslash (V_0 \cup \{\tilde{j}\})$ and $I$ can be written as a linear combination of an inequality in~\eqref{eq:Iitself}, $x_{j'} \leq 1$, and $x_{j''} \leq 1$, and the ones corresponding to proper subsets of $I$ are dominated by such linear combinations. Hence,~\eqref{eq:V0_a}-\eqref{eq:V0_b} reduces to~\eqref{eq:Iitself} in description of $\mathscr{P}_{(x_{V_0} = 0, x_{\tilde{j}}=1)} = \{x \in \mathscr{P} \: | \: x_{\tilde{j}}=1;~x_v = 0, \forall v \in V_0\}$.
Besides, in the description of this face, every inequality of type~\eqref{eq:V0_c}-\eqref{eq:V0_d} corresponding to a set $J'=\tilde{J},~\tilde{j} \notin \tilde{J}$, is dominated by the inequality corresponding to $J' = \tilde{J} \cup \{\tilde{j}\}$, thus~\eqref{eq:V0_c}-\eqref{eq:V0_d} can be replaced by
\begin{subequations} \label{eq:V0_cdreduce}
\begin{align}
\nonumber\forall \{i',i''\} \subseteq I \backslash V_0,&~\forall J' \subseteq J \backslash (V_0 \cup \{\tilde{j}\}),  |J'| \geq 1: \\
&\;\;\;\;\sum_{j \in J'} x_j + |J'| x_{i'} + x_{i''} \leq |J'|, \hspace{10mm}\\
&\;\;\;\;\sum_{j \in J'} x_j + x_{i'} + |J'| x_{i''} \leq |J'|.
\end{align}
\end{subequations}
Furthermore, all inequalities in~\eqref{eq:V0_cdreduce} are dominated by linear combinations of the inequalities corresponding to $|J'|=1$, i.e., 
\begin{equation} \label{eq:base}
x_{j'} + x_{i'} + x_{i''} \leq 1,~\forall j' \in J \backslash (V_0 \cup \{\tilde{j}\}),~\forall \{i',i''\} \subseteq I \backslash V_0,
\end{equation}
and~\eqref{eq:base} is, in turn, dominated by~\eqref{eq:Iitself}. This implies that
$$
\begin{aligned}
\mathscr{P}_{(x_{V_0} = 0, x_{\tilde{j}}=1)} = \{ x \in \mathbb{R}^n \: | \: &x_{\tilde{j}}=1;~x_v = 0,~\forall v \in V_0; \\
& 0 \leq x_v \leq 1,~\forall v \in V \backslash (V_0 \cup \{\tilde{j}\});\\
&\sum_{i \in I} x_i + x_{j'} \leq 1,~\forall j' \in J \backslash (V_0 \cup \{\tilde{j}\})\}.
\end{aligned}
$$
Let $i'$ and $i''$ be two fixed vertices in $I$, and recall that $0 < \hat{x}_{i'},\hat{x}_{i''} < 1$. Correspondingly, let $\hat{x}^+$ and $\hat{x}^-$ be the two distinct points defined as follows for a small-enough value of $\epsilon > 0$:
$$
\begin{aligned}
&\hat{x}^+_{i'} = \hat{x}_{i'} + \epsilon;~\hat{x}^+_{i''} = \hat{x}_{i''} - \epsilon;~\hat{x}^+_v = \hat{x}_v, \forall v \in V \backslash \{i',i''\},\\
&\hat{x}^-_{i'} = \hat{x}_{i'} - \epsilon;~\hat{x}^-_{i''} = \hat{x}_{i''} + \epsilon;~\hat{x}^-_v = \hat{x}_v, \forall v \in V \backslash \{i',i''\}.
\end{aligned}
$$
Then, $\hat{x} = \frac{1}{2}(\hat{x}^+ + \hat{x}^-)$ while $\hat{x}^+,\hat{x}^- \in \mathscr{P}_{(x_{V_0} = 0, x_{\tilde{j}}=1)}$, which contradicts extremity of $\hat{x}$.

\item \textsc{case 3}: $|V_1| \geq 2$\\
The set of inequalities~\eqref{eq:V0} indicate that if $I \cap V_1 \neq \emptyset$ and $J \cap V_1 \neq \emptyset$, then each one of $I$ and $J$ must have exactly one vertex in $V_1$. Suppose $V_1 = \{i',j'\}$, for some $i' \in I$ and $j' \in J$. Then,~\eqref{eq:V0} immediately implies $\hat{x}_v =0 , \forall v \in V \backslash (V_0 \cup V_1)$, which contradicts $\hat{x}$ being fractional. On the other hand, if $I \cap V_1 = \emptyset$, then~\eqref{eq:V0_a}-\eqref{eq:V0_b} imply $\hat{x}_i = 0, \forall i \in I$, and~\eqref{eq:V0_c}-\eqref{eq:V0_d} become trivial and redundant. As a result, 
$$
\begin{aligned}
\mathscr{P}_{(x_{V_0} = 0, x_{V_1} = 1)} = \{ x \in \mathbb{R}^n \: | \:&x_v = 1, \forall v \in V_1;\\
&x_v = 0,~\forall v \in V_0; \\
&0 \leq x_v \leq 1,~\forall v \in V \backslash (V_0 \cup V_1) \},
\end{aligned}
$$
all extreme points of which are integral. Patently, the same result holds when $J \cap V_1 = \emptyset$, which contradicts fractionality of $\hat{x}$.
\end{itemize}
Hence, such a fractional extreme point $\hat{x}$ does not exist, and the proof is complete.
\end{proof}

It is clear that IUC, $1$-dependent set, and co-1-defective clique are equivalent in complete bipartite graphs. This leads to the following corollary concerning the $1$-dependent set and co-1-defective clique polytopes.
\begin{corollary}
Let $G=K_{|I|,|J|}$ be a complete bipartite graph, where $|I|,|J| \geq 2$. Then, the family of inequalities~\eqref{eq:doublestar} for all double-star subgraphs of $G$, together with the variable bounds, is sufficient to describe the $1$-dependent set (equivalently, co-1-defective clique) polytope associated with $G$.
\end{corollary}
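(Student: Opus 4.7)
The plan is to reduce the corollary to Theorem~\ref{th:CB} by verifying the informal claim preceding it, namely that in a complete bipartite graph the notions of IUC, $1$-dependent set, and co-$1$-defective clique all coincide. Once the three families of feasible sets are shown to be the same, the three polytopes are the same, so the description established in Theorem~\ref{th:CB} transfers verbatim, giving the corollary.

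First I would set up notation: in $G=K_{|I|,|J|}$, any subset $S \subseteq V$ decomposes uniquely as $S = I' \cup J'$ with $I' \subseteq I$ and $J' \subseteq J$, and $G[S]$ is the complete bipartite graph $K_{|I'|,|J'|}$. Then I would characterize each of the three properties on $G[S]$. For the IUC property, I would observe that any vertex on the smaller side (say $i \in I'$) together with two neighbors $j_1,j_2 \in J'$ induces a $P_3$; hence $G[S]$ is $P_3$-free precisely when $|I'| \le 1$ or $|J'|=0$ by symmetry, which collapses to: $S \subseteq I$, or $S \subseteq J$, or $S$ consists of a single vertex from each side (an edge). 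For the 1-dependent set property, the maximum degree in $K_{|I'|,|J'|}$ equals $\max\{|I'|,|J'|\}$ when the other side is nonempty, so it is at most $1$ exactly in the same three cases. For the co-$1$-defective clique property, $G[S]$ has $|I'|\cdot|J'|$ edges, and this is at most $1$ under, again, the same three cases. Thus the three families of subsets of $V$ are literally identical.

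Because the feasible families coincide, their convex hulls coincide, i.e., the IUC polytope, the $1$-dependent set polytope, and the co-$1$-defective clique polytope associated with $G$ are the same polytope in $\mathbb{R}^n$. Applying Theorem~\ref{th:CB} then immediately yields that this common polytope is described by the family of inequalities~\eqref{eq:doublestar} for all double-star subgraphs of $G$ together with the bounds $0 \le x_v \le 1$, which is the content of the corollary.

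There is no real obstacle: the entire argument is a direct translation of the equivalence of the three combinatorial objects into an equality of polytopes. The only thing one must be slightly careful about is not over-restricting the characterization (in particular, verifying that the three cases above are the \emph{same} three cases for all three properties), but this is routine. All polyhedral work has already been done in Theorem~\ref{th:CB}.
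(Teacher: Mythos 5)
Your proposal is correct and is exactly the paper's route: the paper dispenses with the corollary by remarking that IUC, $1$-dependent set, and co-$1$-defective clique coincide on complete bipartite graphs and then invoking Theorem~\ref{th:CB}, which is precisely your argument with the (routine but accurate) verification of the three-way equivalence written out. Your case analysis ($S\subseteq I$, $S\subseteq J$, or $S$ an edge with one endpoint on each side) is the correct common characterization, so nothing is missing.
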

\subsection{Fan and wheel}
\label{sec:fanwheel}
\noindent
A {\em fan} graph on $n$ vertices is composed of one vertex adjacent to $n-1$ vertices inducing a path (chain) subgraph. Following our notation in the previous section, we denote the vertex set of a fan graph by $F=\{h\} \cup P$, where $h$ is the hub vertex and $P$ denotes the vertex set of the corresponding path subgraph. Recall that a path graph itself is not facet-producing for the IUC polytope by Theorem~\ref{th:tree}. Conventionally, we assume that the edges of $G[P]$ are given by $\{i,i+1\}, \forall i \in \{1,\ldots,|P|-1\}$.
\begin{theorem} \label{th:fan} \textbf{\em (Fan Inequality)}
Let $F= \{h\} \cup P,~|P| \geq 4$, be a subset of vertices inducing a fan graph in $G=(V,E)$. Given $|P|=3q+r$, where $q$ is a positive integer and $r \in \{0,1,2\}$, the inequality
\begin{equation} \label{eq:fan}
\sum_{i \in P} x_i + \left ( 2(q-1) + \left \lfloor \frac{2(r+1)}{3} \right \rfloor \right ) x_h \leq 2q + \left \lfloor \frac{2(r+1)}{3} \right \rfloor
\end{equation}
is valid for $\mathscr{P}_{_{\!\!\mathcal{IUC}}}(G)$, and induces a facet of $\mathscr{P}_{_{\!\!\mathcal{IUC}}}(G[F])$ if and only if $r \neq 2$. 
\end{theorem}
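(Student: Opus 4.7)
The proof splits into a validity argument and a facet-defining claim that depends on the residue $r \in \{0, 1, 2\}$.

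\emph{Validity.} Since $\mathscr{P}_{_{\!\!\mathcal{IUC}}}(G)$ is the convex hull of indicator vectors of IUCs, I would verify~\eqref{eq:fan} at an arbitrary IUC $S \subseteq V$. If $h \notin S$, then $S \cap P$ is an IUC of the path $G[P]$, whose cardinality is at most $2q + \lceil 2r/3 \rceil = R_r$ (the right-hand side of~\eqref{eq:fan}); this bound is itself a Gomory--Chv\'{a}tal cut obtained by summing the $q$ disjoint OT inequalities on the triples $\{3k-2, 3k-1, 3k\}$ together with the variable bounds $x_{3q+\ell} \leq 1$ for $\ell = 1, \ldots, r$. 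If $h \in S$, then any two distinct vertices $i, j \in S \cap P$ must be adjacent in $P$, for otherwise $\{h, i, j\}$ would be an open triangle; hence $S \cap P$ is contained in an edge of $P$ and $|S \cap P| \leq 2 = R_r - C_r$, where $C_r$ denotes the coefficient of $x_h$ in~\eqref{eq:fan}. In both cases~\eqref{eq:fan} holds.

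\emph{Facet-defining for $r \in \{0,1\}$.} I would adapt the supporting-hyperplane argument used in the proof of Theorem~\ref{th:hole}. Let $\mathcal{F}$ be the face of $\mathscr{P}_{_{\!\!\mathcal{IUC}}}(G[F])$ defined by equality in~\eqref{eq:fan}, and suppose $\sum_{v \in F} a_v x_v = b$ is an arbitrary supporting hyperplane containing $\mathcal{F}$. The ``edge-with-hub'' IUCs $\{h, i, i+1\}$ for $i = 1, \ldots, |P|-1$ all lie in $\mathcal{F}$ and substitution gives $a_h + a_i + a_{i+1} = b$, so $a_i + a_{i+1}$ is constant along $P$ and $a_i$ depends only on the parity of $i$; write $\alpha$ and $\beta$ for the two values. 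To force $\alpha = \beta$, I would use the two maximum IUCs of $G[P]$
\[U_1 = \bigcup_{k=1}^{q} \{3k-2,\, 3k-1\} \,\cup\, T, \qquad U_2 = (U_1 \setminus \{3q-1\}) \cup \{3q\},\]
with $T = \emptyset$ for $r = 0$ and $T = \{3q+1\}$ for $r = 1$; both sets can be verified directly to be IUCs of size $R_r$. Substituting their indicator vectors into the hyperplane and subtracting yields $a_{3q} = a_{3q-1}$, and since $3q$ and $3q-1$ have opposite parities this forces $\alpha = \beta$. Then $\sum_{i \in U_1} a_i = b$ gives $b = R_r \alpha$, and the hub equation pins down $a_h = b - 2\alpha = (R_r - 2)\alpha = C_r \alpha$, so the hyperplane is a positive multiple of~\eqref{eq:fan}.

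\emph{Non-facet for $r = 2$.} The above construction of $U_2$ fails precisely when $r = 2$, because then $T = \{3q+1, 3q+2\}$ together with $3q$ induces a $P_3$; more strongly, $P_{3q+2}$ admits a \emph{unique} maximum IUC, namely $U^{\star} = \bigcup_{k=1}^{q}\{3k-2, 3k-1\} \cup \{3q+1, 3q+2\}$. I would establish uniqueness by partitioning $P$ into the $q$ blocks $\{3k-2, 3k-1, 3k\}$ plus the tail $\{3q+1, 3q+2\}$: each block contributes at most two vertices to any IUC, so achieving $|S| = 2q+2$ forces both tail vertices and exactly two vertices per block, parametrised by the choice $\omega_k \in \{1, 2, 3\}$ of omitted vertex. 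Avoiding the open triangle $\{3q, 3q+1, 3q+2\}$ forces $\omega_q = 3$, and the implication $\omega_{k+1} = 3 \Rightarrow \omega_k = 3$ (from the open triangle $\{3k, 3k+1, 3k+2\}$) then propagates backward to give $\omega_k = 3$ for every $k$. Consequently the only IUC vectors in $\mathcal{F}$ are the $|P|-1$ edge-with-hub vectors together with $x_{U^{\star}}$, which span an affine subspace of dimension at most $|P| - 1 < |F| - 1$, so~\eqref{eq:fan} cannot define a facet. Equivalently, for every $\alpha \neq \beta$ the hyperplane $\alpha \sum_{i \text{ odd}} x_i + \beta \sum_{i \text{ even}} x_i + q(\alpha + \beta) x_h = (q+1)(\alpha + \beta)$ supports $\mathscr{P}_{_{\!\!\mathcal{IUC}}}(G[F])$ along $\mathcal{F}$ yet is not a multiple of~\eqref{eq:fan}. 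The main obstacle is the uniqueness claim for the maximum IUC in $P_{3q+2}$; the remaining steps are direct adaptations of the hole-inequality proof.
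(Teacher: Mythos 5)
Your proposal is correct and follows essentially the same route as the paper's proof: validity via the path bound plus the observation that an IUC containing $h$ meets $P$ in at most two (adjacent) vertices, the facet claim for $r\in\{0,1\}$ via the supporting-hyperplane argument combining the cliques $\{h,i,i+1\}$ with two maximum path IUCs differing in one vertex, and the non-facet claim for $r=2$ by counting the $|P|$ extreme points on the face and bounding its dimension by $|F|-2$. Your block-decomposition argument for the uniqueness of the maximum IUC in $P_{3q+2}$ actually supplies a detail the paper only asserts.
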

\begin{proof}
First, note that the right-hand side of~\eqref{eq:fan} is the cardinality of a maximum IUC in $G[P]$; hence, the inequality
\begin{equation} \label{eq:Pvalid}
\sum_{i \in P} x_i \leq 2q + \left \lfloor \frac{2(r+1)}{3} \right \rfloor
\end{equation}
is valid for $\mathscr{P}_{_{\!\!\mathcal{IUC}}}(G)$. More formally,~\eqref{eq:Pvalid} is the Gomory-Chv\'{a}tal cut corresponding to the inequality obtained from the summation of all OT inequalities of $G[P]$ and $2 x_1 + x_2 + x_{|P|-1} + 2 x_{|P|} \leq 6$ divided by 3. Besides, every maximal IUC containing $h$ in $G[F]$ is a clique of size 3, since every pair of non-adjacent vertices in $G[P]$ with $h$ induces an open triangle in this subgraph. Therefore, $\max \{ \sum_{i \in P} x_i \: | \: x \in \mathscr{P}_{_{\!\!\mathcal{IUC}}}(G[F]) \text{ and } x_h = 1\} = 2$, and the coefficient of $x_h$ due to lifting into~\eqref{eq:Pvalid} is $2q + \big \lfloor \frac{2(r+1)}{3} \big \rfloor-2$. This establishes validity of~\eqref{eq:fan}.

Next, we show that $r \neq 2$ is sufficient for~\eqref{eq:fan} to induce a facet of the IUC polytope associated with $G[F]$. For $r=0$, consider the proper face $\mathcal{F} = \{x \in \mathscr{P}_{_{\!\!\mathcal{IUC}}}(G[F]) \: | \: \sum_{i \in P} x_i + 2(q-1) x_h = 2q \}$ of $\mathscr{P}_{_{\!\!\mathcal{IUC}}}(G[F])$ and an arbitrary supporting hyperplane $\mathscr{H}: a_h x_h + \sum_{i \in P} a_i x_i = b$ that contains $\mathcal{F}$. Let $x_{U_1}$ and $x_{U_2}$ be the incidence vectors of two maximum IUCs in $G[P]$ given as follows:
\begin{equation}
\begin{aligned}
U_1 &= \{1,3,3p-1,3p,\forall p \in \{2,\ldots,q\} \},\\
U_2 &= \{2,3,3p-1,3p,\forall p \in \{2,\ldots,q\} \}.
\end{aligned}
\end{equation}
These are the sets of black vertices in the path graphs obtained from eliminating the vertex in the last partition in the graphs of Figure~\ref{fig:hole_1}. Clearly, $x_{U_1},x_{U_2} \in \mathcal{F} \subseteq \mathscr{H}$, which immediately leads to $a_1 = a_2$. In addition, $a_{i}=a_{i+2}, \forall i \in \{1,\ldots,|P|-2\}$, as the incidence vector of every clique $\{h,i,i+1\},\forall i \in \{1,\ldots,|P|-1\}$, in $G[F]$ belongs to $\mathcal{F} \subseteq \mathscr{H}$. This indicates that $a_i = a, \forall i \in P$, $b=2aq$, and $a_h = 2a(q-1)$. Finally, $\mathscr{P}_{_{\!\!\mathcal{IUC}}}(G[F])$ having non-empty interior implies $a \neq 0$, so $\mathcal{F}$ is a facet of $\mathscr{P}_{_{\!\!\mathcal{IUC}}}(G[F])$. For the case $r=1$, using the same argument based on
$$
\begin{aligned}
U_1 &= \{3q+1,1,2,3p-2, 3p,\forall p \in \{2,\ldots,q\}\},\\
U_2 &= \{3q+1,1,3,3p-2, 3p,\forall p \in \{2,\ldots,q\}\},
\end{aligned}
$$
we may show that the inequality $\sum_{i \in P} x_i +  (2q-1) x_h \leq 2q+1$ is facet-inducing for $\mathscr{P}_{_{\!\!\mathcal{IUC}}}(G[F])$. This completes the sufficiency part of the proof.

It remains to prove that~\eqref{eq:fan} is not facet-defining for $\mathscr{P}_{_{\!\!\mathcal{IUC}}}(G[F])$ if $r = 2$. Observe that $G[F]$ has exactly $|P|-1$ IUCs containing $h$ whose 
incidence vectors belong to $\mathcal{F}= \{x \in \mathscr{P}_{_{\!\!\mathcal{IUC}}}(G[F]) \: | \: \sum_{i \in P} x_i + 2q x_h = 2q + 2 \}$, that is $\{h,i,i+1\},\forall i \in \{1,\ldots,|P|-1\}$. Thus, every other extreme point of $\mathscr{P}_{_{\!\!\mathcal{IUC}}}(G[F])$ that lies on $\mathcal{F}$ must be the incidence vector of a maximum IUC in $G[P]$. On the other hand, if $r=2$, $G[P]$ has a unique maximum IUC, whose incidence vector satisfies $x_1 = x_2 = x_{|P|-1} = x_{|P|} = 1$. Therefore, exactly $|P|$ extreme points of $\mathscr{P}_{_{\!\!\mathcal{IUC}}}(G[F])$ belong to $\mathcal{F}$, which implies \textit{dim}$(\mathcal{F}) \leq |P|-1 = |F|-2$. Recall that, every point $x \in \mathcal{F}$ is a convex combination of these extreme points. Hence, $\mathcal{F}$ is not a facet of $\mathscr{P}_{_{\!\!\mathcal{IUC}}}(G[F])$, and the proof is complete.
\end{proof}

Every fan graph contains quadratically many induced subgraphs of the same type, most of which satisfy the conditions of Theorem~\ref{th:fan}. The following theorem states that the fan inequalities corresponding to those subgraphs are facet-inducing for the IUC polytope associated with the supergraph.
\begin{theorem} \label{th:subfan}
Let $G=(F,E),~F= \{h\} \cup P$, be a fan graph. Then, every fan inequality~\eqref{eq:fan} corresponding to $F' = \{h\} \cup P',~P' \subseteq P$, satisfying the cardinality condition of Theorem~\ref{th:fan} induces a facet of $\mathscr{P}_{_{\!\!\mathcal{IUC}}}(G)$. 
\end{theorem}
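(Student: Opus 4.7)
The plan is to prove this by sequentially lifting, starting from the facet of $\mathscr{P}_{_{\!\!\mathcal{IUC}}}(G[F'])$ guaranteed by Theorem~\ref{th:fan}. Letting $m := 2q + \lfloor 2(r+1)/3 \rfloor$ denote the right-hand side of~\eqref{eq:fan} for $F'$, it suffices to show that in every sequential lifting of this facet to $\mathscr{P}_{_{\!\!\mathcal{IUC}}}(G)$ the coefficient of each variable $x_v$, $v \in P \setminus P'$, vanishes. Following the same strategy used in the proofs of Theorems~\ref{th:star} and~\ref{th:Dstar}, this reduces via~\eqref{eq:lift} to exhibiting, for each such $v$, an IUC $U \subseteq F' \cup \{v\}$ with $v \in U$ whose incidence vector satisfies~\eqref{eq:fan} with equality; validity of~\eqref{eq:fan} on $\mathscr{P}_{_{\!\!\mathcal{IUC}}}(G)$ then pins every lifted coefficient at zero.

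A crucial simplification comes from the hypothesis that $G[F']$ is a fan: because $G[P']$ must be a path, $P'$ is a contiguous interval of vertices along $G[P]$. Consequently, every $v \in P \setminus P'$ has at most one path-neighbor in $P'$, and that neighbor, if it exists, must be the endpoint $u$ of $P'$ immediately adjacent to $v$ in $G[P]$. The construction of $U$ then splits into two cases. If $v$ has no neighbor in $P'$, we take $U := U' \cup \{v\}$ for any maximum IUC $U'$ of $G[P']$: the vertex $v$ is isolated in $G[U]$, $h \notin U$, and $|U \cap P'| = m$, as required.

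If instead $v$ is adjacent to a unique endpoint $u$ of $P'$, we need a maximum IUC $U'$ of $G[P']$ in which $u$ is either absent or occurs as an isolated component of $G[U']$; for then $U := U' \cup \{v\}$ is still a cluster graph---$v$ either remains isolated or forms a fresh edge-component $\{u,v\}$ with $u$---and still satisfies $|U \cap P'| = m$. The main technical task is thus constructing such an ``endpoint-isolating'' maximum IUC of the path $G[P']$. Labeling $P' = \{1, \ldots, |P'|\}$ so that $u = 1$, the explicit choices $U' = \{1\} \cup \bigcup_{k=1}^{q-1}\{3k, 3k+1\} \cup \{3q\}$ for $r = 0$ and $U' = \{1\} \cup \bigcup_{k=1}^{q}\{3k, 3k+1\}$ for $r = 1$ have cardinality $m$ and leave vertex $1$ isolated in $G[U']$; the opposite endpoint is handled by a symmetric construction.

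The main obstacle is precisely this endpoint-isolating construction, and it is exactly where the hypothesis $r \in \{0,1\}$ is used. When $r = 2$ the unique maximum IUC of the path $G[P']$ contains both endpoints inside edge-components---a fact already exploited in the proof of Theorem~\ref{th:fan} to show that~\eqref{eq:fan} is not facet-defining in that regime---so no endpoint-isolating analogue exists there.
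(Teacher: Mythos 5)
Your proposal is correct and follows essentially the same route as the paper's proof: both arguments lift the facet of $\mathscr{P}_{_{\!\!\mathcal{IUC}}}(G[F'])$ sequentially, observe that a vertex $v \in P \setminus P'$ has at most one neighbor in the interval $P'$, and kill the lifting coefficient by appending $v$ to a maximum IUC of the path $G[P']$ in which the relevant endpoint is isolated. The only (cosmetic) difference is that you construct the endpoint-isolating maximum IUCs explicitly, whereas the paper cites their existence from the proof of Theorem~\ref{th:fan}; your explicit sets are verified correctly for both $r=0$ and $r=1$.
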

\begin{proof}
Without loss of generality, let $P' = \{j,j+1,\ldots,k-1,k\}$ and consider lifting a variable $x_i, i \in P \backslash P'$ into the fan inequality~\eqref{eq:fan} corresponding to $F' = \{h\} \cup P'$. Note that $P' \cap N(i)= \emptyset, \forall i \in P \backslash \{j-1,\ldots,k+1\}$, thus every maximum IUC in $G[P']$ together with $i \in P \backslash \{j-1,\ldots,k+1\}$ forms an IUC in $G$. Besides, observe in the proof of Theorem~\ref{th:fan} that $G[P']$ always has a maximum IUC in which $j$ is an isolated vertex. As $P' \cap N(j-1)=\{j\}$, the union of this set and the vertex $j-1$ is also an IUC in $G$. By symmetry, the same result holds for the vertex $k+1$. This implies that the coefficient of every variable $x_i, \forall i \in P \backslash P'$, vanishes in an arbitrary sequential lifting of the fan inequality corresponding to $F'$, which completes the proof. 
\end{proof}

Next, we present similar results concerning the IUC polytope associated with a {\em wheel} graph. A wheel graph induced by $W=\{h\} \cup H$ consists of a hub vertex $h$ connected to all vertices of a hole $H$. As before, we assume that adjacent vertices in $G[H]$ have consecutive labels, and $|H|+1 \equiv 1$. Novelty of the following result mainly concerns the case that $|H|$ is a multiple of 3.
\begin{theorem} \label{th:wheel} \textbf{\em (Wheel Inequality)}
Let $W= \{h\} \cup H,~|H| \geq 4$, be a subset of vertices inducing a wheel graph in $G=(V,E)$. Given $|H|=3q+r$, where $q$ is a positive integer and $r \in \{0,1,2\}$, the inequality
\begin{equation} \label{eq:wheel}
\sum_{i \in H} x_i + \left ( 2(q-1) + \left \lfloor \frac{2r}{3} \right \rfloor \right ) x_h \leq 2q + \left \lfloor \frac{2r}{3} \right \rfloor
\end{equation}
is valid for $\mathscr{P}_{_{\!\!\mathcal{IUC}}}(G)$, and induces a facet of $\mathscr{P}_{_{\!\!\mathcal{IUC}}}(G[W])$ if and only if $r \neq 0$ or $q$ is odd. 
\end{theorem}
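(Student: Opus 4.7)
The plan is to handle validity and the two directions of the facet characterization separately.

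For validity, I would combine Theorem~\ref{th:hole} with a single lifting step. Theorem~\ref{th:hole} gives $\sum_{i \in H} x_i \leq 2q + \lfloor 2r/3 \rfloor$ as a valid inequality for $\mathscr{P}_{_{\!\!\mathcal{IUC}}}(G)$ (when $r = 0$ the same right-hand side is produced by the Gomory--Chv\'{a}tal cut obtained from summing the OT inequalities around the hole). Because $h$ is adjacent to every vertex of $H$, every pair of non-adjacent vertices $\{i,j\} \subseteq H$ together with $h$ induces an open triangle; hence every IUC in $G[W]$ containing $h$ is contained in a triangle $\{h, i, i+1\}$. Thus the maximum value of $\sum_{i \in H} x_i$ over IUCs in $G[W]$ containing $h$ is $2$, and lifting $x_h$ into the hole inequality produces the coefficient $2q + \lfloor 2r/3 \rfloor - 2 = 2(q-1) + \lfloor 2r/3 \rfloor$, yielding~\eqref{eq:wheel}.

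For the sufficiency (facet when $r \neq 0$ or $q$ is odd), let $\mathcal{F}$ denote the face of $\mathscr{P}_{_{\!\!\mathcal{IUC}}}(G[W])$ defined by~\eqref{eq:wheel} at equality and let $\mathscr{H}: a_h x_h + \sum_{i \in H} a_i x_i = b$ be an arbitrary supporting hyperplane containing $\mathcal{F}$. Each triangle $\{h, i, i+1\}$ lies on $\mathcal{F}$, so substituting and differencing consecutive equations yields $a_i = a_{i+2}$ for every cyclic $i$. When $r = 0$ and $q$ is odd, $|H| = 3q$ is odd and iterating this relation already gives $a_i = a$ for all $i \in H$. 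When $r \in \{1, 2\}$, I would invoke the two maximum IUCs $U_1, U_2 \subseteq H$ constructed in cases 1 and 2 of the proof of Theorem~\ref{th:hole}; each has size $2q + \lfloor 2r/3 \rfloor$, so with $x_h = 0$ both lie on $\mathcal{F}$. Since $U_1$ and $U_2$ differ only in containing vertex $1$ versus vertex $2$, substituting into $\mathscr{H}$ and subtracting yields $a_1 = a_2$, which together with $a_i = a_{i+2}$ again forces $a_i = a$ for all $i$. Finally, $a_h + 2a = b$ (from any clique) and $b = (2q + \lfloor 2r/3 \rfloor) a$ (from any max IUC with $x_h = 0$) pin down $a_h = (2(q-1) + \lfloor 2r/3 \rfloor) a$, so $\mathscr{H}$ is a non-zero scalar multiple of~\eqref{eq:wheel} and $\mathcal{F}$ is a facet.

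The main obstacle is the necessity direction, where I must show that~\eqref{eq:wheel} is not facet-defining when $r = 0$ and $q$ is even. My plan is to exhibit an auxiliary equation linearly independent of~\eqref{eq:wheel} that is satisfied by every extreme point of $\mathcal{F}$. First, I would classify the maximum IUCs of $C_{3q}$: writing such an IUC as a disjoint union of $a_s$ singletons and $b_s$ edges with $a_s + 2 b_s = 2q$ and (since the $a_s + b_s$ components are separated by skipped vertices summing to $q$) $a_s + b_s \leq q$, these force $a_s = 0$ and $b_s = q$, so there are exactly three maximum IUCs, the cyclic phases of the ``take-take-skip'' pattern. A parity count within each period shows every such max IUC contains exactly $q$ odd-labeled and $q$ even-labeled vertices. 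Because $|H| = 3q$ is even, the relation $a_i = a_{i+2}$ from the cliques only constrains the coefficients within each parity class. Combining these observations, the equation
\[
\sum_{i \in H,\, i \text{ odd}} x_i + (q-1) x_h = q
\]
is satisfied on every triangle $\{h, i, i+1\}$ (one of $i, i+1$ is odd, so LHS $= 1 + (q-1) = q$) and on every maximum IUC of $C_{3q}$ with $x_h = 0$ (LHS $= q + 0 = q$), hence on all of $\mathcal{F}$. Since it is not a scalar multiple of~\eqref{eq:wheel}, $\mathcal{F}$ has codimension at least $2$ in $\mathscr{P}_{_{\!\!\mathcal{IUC}}}(G[W])$ and cannot be a facet.
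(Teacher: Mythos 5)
Your proof is correct. The validity argument and the sufficiency direction follow essentially the paper's route: you lift $x_h$ into the hole inequality using $\max\{\sum_{i\in H} x_i \mid x \in \mathscr{P}_{_{\!\!\mathcal{IUC}}}(G[W]),\ x_h=1\}=2$, and for the facet proof you derive $a_i=a_{i+2}$ from the triangles $\{h,i,i+1\}$ and close the cycle either by parity of $|H|$ (the $r=0$, $q$ odd case, exactly as in the paper) or via the two maximum IUCs $U_1,U_2$ from the proof of Theorem~\ref{th:hole} (the paper simply invokes the general lifting theorem for $r\neq 0$; your direct hyperplane computation is an equivalent substitute). Where you genuinely diverge is the necessity direction for $r=0$ with $q$ even. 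The paper enumerates the extreme points on the face --- the $|H|$ triangles plus the three maximum IUCs of $C_{3q}$ --- and bounds the dimension of their affine hull via the rank deficiency of the adjacency matrix of an even cycle. You instead exhibit an explicit second valid equation, $\sum_{i \in H,\, i\ \text{odd}} x_i + (q-1)x_h = q$, satisfied by every vertex of the face, which immediately forces codimension at least two since its coefficient vector is linearly independent of that of~\eqref{eq:wheel}. This is more elementary (no linear-algebra fact about even cycles is needed), and as a bonus your counting argument $a_s+2b_s=2q$, $a_s+b_s\le q$ actually \emph{proves} that $C_{3q}$ has exactly the three cyclic phases of the take--take--skip pattern as maximum IUCs, a classification the paper only asserts as ``easy to verify.'' The two parity facts you rely on (each triangle contains exactly one odd-labelled vertex of $H$, each maximum IUC exactly $q$ of them) hold precisely because $|H|=3q$ is even, so the label parities alternate consistently around the cycle including the wrap-around edge $\{3q,1\}$; it is worth stating this explicitly, since it is the one place where the evenness of $q$ enters your construction.
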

\begin{proof}
Validity of~\eqref{eq:wheel} as well as its facial property when $r \neq 0$ are evident, as~\eqref{eq:wheel} is obtained from lifting $x_h$ into the hole inequality~\eqref{eq:hole}.
Similar to a fan graph, $\max \{ \sum_{i \in H} x_i \: | \: x \in \mathscr{P}_{_{\!\!\mathcal{IUC}}}(G[W]) \text{ and } x_h = 1\} = 2$, hence the coefficient of $x_h$ in such a lifted hole inequality is $2q + \big \lfloor \frac{2r}{3} \big \rfloor-2$.

Let $r=0$, and consider $\mathcal{F} = \{x \in \mathscr{P}_{_{\!\!\mathcal{IUC}}}(G[W]) \: | \: \sum_{i \in H} x_i + 2(q-1) x_h = 2q \}$ as well as a supporting hyperplane $\mathscr{H}: a_h x_h + \sum_{i \in H} a_i x_i = b$ of $\mathscr{P}_{_{\!\!\mathcal{IUC}}}(G[W])$ containing it. The incidence vector of every clique $\{h,i,i+1\},\forall i \in \{1,\ldots,|H|\}$, in $G[W]$ belongs to $\mathcal{F} \subseteq \mathscr{H}$, thus $a_{i}=a_{i+2}, \forall i \in \{1,\ldots,|H|\}$. If $q$ is odd---equivalently, $H$ is of odd cardinality---this result further implies that $a_{|H|}=a_{2}$, which immediately leads to $a_{i}=a, \forall i \in \{1,\ldots,|H|\}$, $b=2aq$, and $a_h=2a(q-1)$. Since the interior of $\mathscr{P}_{_{\!\!\mathcal{IUC}}}(G[W])$ is non-empty, $a \neq 0$ and the sufficiency part of the proof is complete.

To prove the necessity, we examine the case where $r=0$ and $q$ is even. Similar to the proof of Theorem~\ref{th:fan}, our argument is based on the number of extreme points of $\mathscr{P}_{_{\!\!\mathcal{IUC}}}(G[W])$ that lie on $\mathcal{F}$, and the dimension of the affine subspace containing them. Patently, $G[W]$ has exactly $|H|$ IUCs containing $h$ whose incidence vectors belong to $\mathcal{F}$, i.e., $\{h,i,i+1\},\forall i \in \{1,\ldots,|H|\}$. Recall from Theorem~\ref{th:hole} that the right-hand-side of~\eqref{eq:wheel} is the IUC number of $G[H]$, hence every other extreme point of $\mathscr{P}_{_{\!\!\mathcal{IUC}}}(G[W])$ lying on $\mathcal{F}$ must be the incidence vector of a maximum IUC in $G[H]$. It is easy to verify that, if $|H|$ is a multiple of 3, $G[H]$ always has three distinct maximum IUCs (of cardinality $2q$) as follows: 
$$
\begin{aligned}
U_1 &= \{3p-2, 3p-1,\forall p \in \{1,\ldots,q\}\},\\
U_2 &= \{3p-1, 3p,\forall p \in \{1,\ldots,q\}\},\\
U_3 &= \{3p-2, 3p,\forall p \in \{1,\ldots,q\}\}.
\end{aligned}
$$
Hence, exactly $|H|+3$ extreme points of $\mathscr{P}_{_{\!\!\mathcal{IUC}}}(G[W])$ belong to $\mathcal{F}$. 

It is known that the adjacency matrix of a cycle graph with even number of vertices is rank deficient, which immediately implies that the affine subspace of $\mathbb{R}^{|H|}$ spanned by the points $x_{\{i,i+1\}} = e_i + e_{i+1}, \forall i \in \{1,\ldots,|H|\}$, is at most $|H|-2$ dimensional. Besides, the incidence vectors of $U_1,U_2,$ and $U_3$ in $\mathbb{R}^{|H|}$ can be easily written as linear combinations of these points, hence the dimension of the subspace of $\mathbb{R}^{|H|}$ spanned by $\{x_{U_1},x_{U_2},x_{U_3},x_{\{i,i+1\}}, \forall i \in \{1,\ldots,|H|\}\}$, is no more than $|H|-2$. Since all extreme points of $\mathscr{P}_{_{\!\!\mathcal{IUC}}}(G[W])$ lying on $\mathcal{F}$ are generated from a point in this set by adding the coordinate corresponding to $h$, the subspace of $\mathbb{R}^{|W|}$ spanned by them is at most $|H|-1 = |W|-2$ dimensional. Finally, because every point of $\mathcal{F}$ is a convex combination of these extreme points, dimension of this face is no more than $|W|-2$, therefore $\mathcal{F}$ is not a facet. The proof is complete.
\end{proof}

The result of Theorem~\ref{th:subfan} naturally extends to the induced fans of a wheel graph. The only exception is given by the subgraphs obtained from eliminating a single vertex (and incident edges) from the corresponding chordless cycle. Let $F=\{h\} \cup P$, where $P = H \backslash \{v\}$, be the vertex set of such a fan subgraph, while $|P|$ satisfies the facet-defining conditions of Theorem~\ref{th:fan}, that is $|P| = 3q+1$ or $|P| = 3q$. Observe that, in this case, $|H|$ also satisfies the facet-defining conditions of Theorem~\ref{th:wheel}, that is $|H| = 3q+2$ or $|H| = 3q+1$. Then, it can be easily shown that the wheel inequality~\eqref{eq:wheel} corresponding to $W=\{h\} \cup H$ coincides with the lifted fan equality~\eqref{eq:fan} corresponding to $F=\{h\} \cup P$. 

Patently, this result also applies to a {\em defective wheel} graph. A defective wheel is obtained from a (complete) wheel graph by eliminating some edges connecting the hub vertex to the corresponding hole. In addition to its fan subgraphs, a defective wheel always has at least one chordless cycle that contains its hub vertex. Examples of defective wheels are given in Figure~\ref{fig:holelift}. Let $H_h$ denote a hole containing the hub vertex $h$ in a defective wheel graph. In reference to Theorem~\ref{th:holelift} of Section~\ref{sec:cyclelift}, observe that every vertex not in $H_h$ has at most two neighbors in this set. The following corollary summarizes the implications of Theorems~\ref{th:subfan} and~\ref{th:holelift} regarding the facial structure of the IUC polytope associated with a defective wheel graph.
\begin{corollary}
Let $G=(V,E)$ be a defective wheel graph. Then, 
\begin{itemize}
\item[{\em (i)}] inequality~\eqref{eq:fan} corresponding to a fan subgraph satisfying the cardinality condition of Theorem~\ref{th:fan} is facet-defining for $\mathscr{P}_{_{\!\!\mathcal{IUC}}}(G)$. Besides,
\item[{\em (ii)}] inequality~\eqref{eq:hole} corresponding to a hole containing the hub vertex that satisfies the cardinality condition of Theorem~\ref{th:hole} induces a facet of $\mathscr{P}_{_{\!\!\mathcal{IUC}}}(G)$.
\end{itemize}
\end{corollary}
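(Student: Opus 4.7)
The plan for both parts is a sequential lifting argument: start from a known facet of the IUC polytope of an induced subgraph (a fan in (i), a hole through $h$ in (ii)) and show that every remaining variable admits lifting coefficient zero. By the characterization~\eqref{eq:lift}, it suffices to exhibit, for each vertex $v$ being lifted, an IUC of $G$ containing $v$ whose incidence vector saturates the inequality under consideration. Because a defective wheel has a very restricted neighborhood structure---every non-hub vertex has at most two neighbors on the hole, and the hub's hole-neighborhood is a proper subset of $H$---both sets of lifting coefficients will turn out to vanish.

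For part (ii), Theorem~\ref{th:hole}(i) gives that~\eqref{eq:hole} is facet-defining for $\mathscr{P}_{_{\!\!\mathcal{IUC}}}(G[H_h])$ under the cardinality condition. To extend this to $\mathscr{P}_{_{\!\!\mathcal{IUC}}}(G)$ I would apply Theorem~\ref{th:holelift}: any vertex $v\in V\setminus H_h$ is necessarily a hole vertex lying in the complementary arc of $H$, so its neighbors in $H_h$ consist of at most two cycle-neighbors on the hole, plus possibly the hub. As noted in the paragraph preceding the corollary, this keeps the count within the thresholds of Theorem~\ref{th:holelift} for both residues $r=1$ and $r=2$, so every lifted coefficient is zero and~\eqref{eq:hole} remains facet-defining for $\mathscr{P}_{_{\!\!\mathcal{IUC}}}(G)$.

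For part (i), I would proceed along the lines of Theorem~\ref{th:subfan}. Having established facet-definingness on $\mathscr{P}_{_{\!\!\mathcal{IUC}}}(G[F])$ via Theorem~\ref{th:fan}, I would lift each $x_v$ with $v\in V\setminus F$ by splitting on $|N(v)\cap P|$. If this count is zero, a maximum IUC of $G[P]$ together with $v$ as an isolated component saturates~\eqref{eq:fan}. If it equals one with neighbor $u$, there is a maximum IUC of the path $G[P]$ in which $u$ is either absent or isolated (verified by a residue-class analysis on the position of $u$ within $P$, using that $|P|\not\equiv 2\pmod 3$), into which $v$ can be inserted. If it equals two, then $v$'s two cycle-neighbors on $H$ force $P=H\setminus\{v\}$, the defectiveness of the wheel implies $v\notin N(h)$, and choosing an interior edge $\{p_1,p_2\}$ of $P$ disjoint from the endpoints of $P$ yields an IUC $\{h,p_1,p_2,v\}$ (the triangle $\{h,p_1,p_2\}$ together with an isolated $v$) that saturates~\eqref{eq:fan}.

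The main obstacle is the final subcase of part (i), $|N(v)\cap P|=2$, for two reasons. First, it is precisely where the defectiveness of the wheel is indispensable: in a complete wheel the lifting coefficient of the unique vertex in $H\setminus P$ would equal one, and the lifted fan inequality would coincide with the wheel inequality of Theorem~\ref{th:wheel} rather than remaining a (proper) fan inequality. Second, the existence of an interior edge of $P$ away from the endpoints relies quietly on the cardinality condition $|P|\ge 4$ of Theorem~\ref{th:fan}. Verifying the intermediate claim in the $|N(v)\cap P|=1$ subcase---that a maximum IUC of a path with $|P|\not\equiv 2\pmod 3$ exists in which any prescribed vertex is excluded or isolated---is the other technically delicate step, but it reduces to a short residue-class computation on the combinatorial identity for the max-IUC number of a path.
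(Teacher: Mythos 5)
Your overall strategy coincides with the paper's: the corollary is presented there as an immediate consequence of Theorem~\ref{th:subfan} (extended from fans to wheels) for part (i) and of Theorem~\ref{th:holelift} for part (ii), together with an observation about how few neighbors an outside vertex can have in the relevant substructure. Your part (i) is a correct and complete working-out of what the paper dismisses with ``naturally extends'' and ``patently'': the split on $|N(v)\cap P|$ is exhaustive (every $v\in V\setminus F$ is a rim vertex, so $N(v)\cap P$ lies inside the set of its two cycle-neighbors), the subcase $|N(v)\cap P|=2$ correctly forces $P=H\setminus\{v\}$ and uses defectiveness to conclude $v\notin N(h)$, and each saturating IUC you exhibit is valid. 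One simplification you missed: when $|N(v)\cap P|=1$, the unique neighbor is automatically an \emph{endpoint} of the arc $P$, and the proof of Theorem~\ref{th:subfan} already records that $G[P]$ has a maximum IUC in which an endpoint is isolated, so no residue-class analysis over interior positions is needed.

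The soft spot is part (ii), and your own sentence exposes it: you allow $v\notin H_h$ to have ``at most two cycle-neighbors plus possibly the hub,'' i.e.\ up to three neighbors in $H_h$, and then assert that this meets the thresholds of Theorem~\ref{th:holelift} for both residues. For $r=2$ the threshold is $|H_h\cap N(v)|\le 2$, so a count of three is not covered. The configuration is realizable: both cycle-neighbors of $v$ lie in $H_h$ only when $H_h=\{h\}\cup(H\setminus\{v\})$, and nothing prevents the spoke $\{h,v\}$ from being present --- take $|H|=8$ with $h$ adjacent to exactly three consecutive rim vertices $a,v,b$; the unique hole through $h$ has $8=3\cdot 2+2$ vertices and $v$ has the three neighbors $a,b,h$ in it. (The paper's preceding paragraph carries the same imprecision, asserting ``at most two neighbors.'') The conclusion survives, and the patch is exactly the kind of direct construction you use in part (i): writing $H_h=\{h,q_1,\dots,q_m\}$ with $m=3q+1$, the triangle $\{v,h,q_1\}$ together with a maximum IUC of the interior path $q_3,\dots,q_{m-1}$ (which has $3(q-1)+1$ vertices, hence a maximum IUC of size $2q-1$) is an IUC containing $v$ that places $2q+1$ vertices in $H_h$ and saturates \eqref{eq:hole}, so the lifting coefficient of $x_v$ still vanishes. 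With that one subcase added, your argument closes.
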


Finally, note that the IUC polytope associated with a fan graph, likewise a wheel graph, possesses exponentially many facets induced by the star inequality~\eqref{eq:star} as well.
\subsubsection*{Lifting fan and wheel inequalities}
\label{sec:fanwheellift}
\noindent
We consider lifting the fan and wheel inequalities under the conditions that they are facet-defining for the IUC polytope associated with the corresponding induced subgraph. Similar to Section~\ref{sec:cyclelift}, our focus is on the distribution of $N(v), v \in V \backslash F$ (resp. $v \in V \backslash W$), within $G[F]$ (resp. $G[W]$) and the cases where the lifting coefficients can be determined without solving a computationally expensive lifting problem.

First, observe that the result of Theorem~\ref{th:holelift}, concerning the hole inequality~\eqref{eq:hole}, also applies to the wheel inequality~\eqref{eq:wheel} since an IUC of cardinality $2q + \left \lfloor \frac{2r}{3} \right \rfloor + 1$ in $G[W \cup \{v\}], v \in V \backslash W,$ can always be attained through the vertices of $H$ (together with $v$) under the theorem conditions. However, the result concerning redundancy of the distribution of neighbors of $v$ does not extend to the fan inequality~\eqref{eq:fan} as well as the wheel inequality~\eqref{eq:wheel} when $r=0$ (and $q$ is odd). Consider, for example, the case where $|P|=7$ and $P \cap N(v) = \{1,3\}$, or $|H|=9$ and $H \cap N(v) = \{1,5\}$. In addition, it is trivial that $|P \cap N(v)| \leq 1$ (resp. $|H \cap N(v)| \leq 1$) is sufficient for the coefficient of $x_v, v \in V \backslash F$ (resp. $v \in V \backslash W$), to vanish in the latter cases, regardless of the role of the hub vertex $h$. In general, however, the lifting coefficient of a variable $x_v$ is highly influenced by connectivity of $v$ and $h$. Particularly, if $\{h,v\} \in E$, existence of two adjacent vertices in $P \cap N(v)$ (resp. $H \cap N(v)$) is sufficient for the corresponding lifting coefficient to vanish. On the other hand, if $\{h,v\} \notin E$, existence of two adjacent vertices in $P \backslash N(v)$ (resp. $H \backslash N(v)$) brings about the same result. The following theorem concerns redundancy of the distribution of $N(v)$ within $G[F]$ given the contribution of the hub vertex $h$. Note that, the coefficient of $x_v$ in every sequential lifting of the corresponding inequality will not exceed 2 because $\{h,v\}$ is always an IUC in the graph. 
\begin{theorem} \label{th:fanwheellift}
Consider the fan inequality~\eqref{eq:fan} where $r \neq 2$. In every sequential lifting of this inequality, the coefficient of a variable $x_v,~v \in V \backslash F$, vanishes under the following conditions: {\em (i)} $h \in N(v)$ and $|P \cap N(v)| \geq \left \lceil \frac{|P|}{2} \right \rceil + 1$, or {\em (ii)} $h \notin N(v)$ and $|P \cap N(v)| \leq \left \lfloor \frac{|P|}{2} \right \rfloor - 1$.
\end{theorem}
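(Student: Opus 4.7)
The plan is to invoke the standard sequential lifting formula from the excerpt and exhibit, in each of the two cases, an explicit IUC in $G[F \cup \{v\}]$ that contains $v$ and attains the value $\pi_0 = 2q + \lfloor 2(r+1)/3 \rfloor$ of the right-hand side of the fan inequality, evaluated under the coefficients of that inequality. Since $\pi_v = \pi_0 - \max\{\sum_{i \in P} x_i + \alpha_h x_h : x \in \mathscr{P}_{_{\!\!\mathcal{IUC}}}(G[F \cup \{v\}]),\ x_v = 1\}$, where $\alpha_h = 2(q-1) + \lfloor 2(r+1)/3 \rfloor$, producing such an IUC gives $\pi_v \leq 0$; validity of the lifted inequality forces $\pi_v \geq 0$, so $\pi_v = 0$.

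The combinatorial engine behind both cases is the following elementary fact: a path on $|P|$ vertices has independence number $\lceil |P|/2 \rceil$, hence any subset of $P$ of cardinality at least $\lceil |P|/2 \rceil + 1$ must contain two consecutive vertices of the path. Under hypothesis (i), this applies directly to $P \cap N(v)$, yielding adjacent $i, i+1 \in P \cap N(v)$. Under hypothesis (ii) it applies to $P \setminus N(v)$, since $|P \cap N(v)| \leq \lfloor |P|/2 \rfloor - 1$ gives $|P \setminus N(v)| \geq |P| - \lfloor |P|/2 \rfloor + 1 = \lceil |P|/2 \rceil + 1$, again producing adjacent $i, i+1 \in P \setminus N(v)$.

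In case (i) the set $U = \{v, h, i, i+1\}$ is a clique in $G[F \cup \{v\}]$: each edge holds because $h$ is adjacent to every vertex of $P$, $v$ is adjacent to $h$ by hypothesis, $v$ is adjacent to both $i$ and $i+1$ since $i, i+1 \in N(v)$, and $\{i,i+1\}$ is a path edge. A clique is a trivial IUC, and its value under the fan coefficients is $\alpha_h \cdot 1 + 2 \cdot 1 = 2(q-1) + \lfloor 2(r+1)/3 \rfloor + 2 = \pi_0$. In case (ii) the set $U = \{v\} \cup \{h, i, i+1\}$ is an IUC whose two connected components are the isolated vertex $v$ (since $v \not\sim h$ and $v \not\sim i, i+1$ by the choice of $i,i+1 \in P \setminus N(v)$) and the triangle on $\{h,i,i+1\}$ (since $h$ is adjacent to both and $i \sim i+1$). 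The same arithmetic yields value $\pi_0$.

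Expected difficulties are minimal; the only subtlety is recognising the correct threshold $\lceil |P|/2 \rceil + 1$ as the pigeonhole boundary for forcing a path-edge inside (or outside) $N(v) \cap P$, and checking that the four-vertex configurations do not violate the IUC property, which amounts to the verifications above. No fan structure beyond the presence of the hub vertex $h$ and a single adjacent or non-adjacent path-edge is needed, so the argument does not interact with the value of $r$ or the parity issues that complicate Theorem~\ref{th:fan} itself.
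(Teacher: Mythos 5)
Your proposal is correct and follows essentially the same route as the paper: both arguments observe that the cardinality hypothesis forces two consecutive path vertices $i,i+1$ inside $P\cap N(v)$ (case (i)) or inside $P\setminus N(v)$ (case (ii)), so that $\{h,v,i,i+1\}$ is an IUC -- a clique in the first case, a triangle plus an isolated vertex in the second -- attaining the right-hand side of the fan inequality and killing the lifting coefficient. You merely make explicit the pigeonhole step (independence number $\lceil |P|/2\rceil$ of a path) and the value computation that the paper leaves implicit.
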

\begin{proof}
It suffices to observe that the cardinality of $P \cap N(v)$ in each case guarantees the existence of two adjacent vertices $i, i+1 \in P$ such that $\{h,v,i,i+1\}$ is an IUC in $G[F \cup \{v\}]$. In the former case, this IUC is a clique, and in the latter, it is the union of a clique and an isolated vertex.
\end{proof}

A similar result holds for the wheel inequality, which we skip here. 
We finish this section with presenting a family of facet-defining inequalities for the IUC polytope associated with an anti-cycle graph due to its fan substructures. We assume the vertices of an anti-cycle graph are labeled as described in the proof of Theorem~\ref{th:antihole}, and $|A|+1 \equiv 1$. 
\begin{theorem} \label{th:fanwheelliftantihole}
Let $G=(A,E)$ be an anti-cycle graph on $|A| \geq 8$ vertices. Then, the inequality
\begin{equation}
\sum_{j=i}^{i+3} x_j + x_k + x_{k+1} \leq 3,~\forall i \in A,~\forall k \in \{i+4, \ldots, |A|+i-2\}, 
\end{equation}
induces a facet of $\mathscr{P}_{_{\!\!\mathcal{IUC}}}(G)$.
\end{theorem}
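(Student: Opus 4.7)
The plan is to derive the inequality as a twofold sequential lifting of the fan inequality of Theorem~\ref{th:fan}. Set $T=\{i,i+1,i+2,i+3,k,k+1\}$ and $P=\{i,i+1,i+2,i+3\}$. In the anti-cycle graph, $G[P]$ is an induced $P_4$ (its edges are $\{i,i+2\},\{i,i+3\},\{i+1,i+3\}$, which form the path $i+2$--$i$--$i+3$--$i+1$), and a vertex $w\in A$ is adjacent to every vertex of $P$ precisely when $w\notin\{i-1,i,i+1,i+2,i+3,i+4\}$. Because $k\in\{i+4,\dots,|A|+i-2\}$, the hub $k$ is adjacent to all of $P$ unless $k=i+4$, while $k+1$ is adjacent to all of $P$ unless $k\equiv |A|+i-2$. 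These two boundary cases cannot coincide when $|A|\ge 8$, so at least one of $F_1=\{k\}\cup P$ and $F_2=\{k+1\}\cup P$ induces a fan graph in $G$; let $F$ be such a fan with hub $h\in\{k,k+1\}$.

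Since $|P|=4$ corresponds to $q=1$, $r=1$ in Theorem~\ref{th:fan}, the fan inequality $\sum_{j\in P}x_j+x_h\le 3$ is facet-defining for $\mathscr{P}_{_{\!\!\mathcal{IUC}}}(G[F])$. I would then lift the remaining hub $h'\in\{k,k+1\}\setminus\{h\}$ via the identity~\eqref{eq:lift}, obtaining
\[
\pi_{h'}=3-\max\!\left\{\sum_{j\in P}x_j+x_h \;:\; x\in\mathscr{P}_{_{\!\!\mathcal{IUC}}}(G[T]),\ x_{h'}=1\right\}.
\]
The maximum equals $2$: any IUC containing both $h$ and $h'$ must exclude $P$ entirely, because $h$ and $h'$ are cyclically consecutive (hence non-adjacent in the anti-cycle), while each $j\in P$ is adjacent to both, so including $j$ creates an induced $P_3$. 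Therefore, the maximum is realized with $x_h=0$ by a triangle $\{h',j_1,j_2\}$ where $j_1,j_2\in P$ are adjacent in $G[P]$. Hence $\pi_{h'}=1$, and the lifted inequality is precisely the one claimed, facet-defining for $\mathscr{P}_{_{\!\!\mathcal{IUC}}}(G[T])$.

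Finally, I would sequentially lift $x_v$ for each $v\in A\setminus T$ and argue that the coefficient vanishes, which amounts to exhibiting an IUC of $G$ containing $v$ and exactly three vertices of $T$. The natural candidates are the six 3-cliques of $G[T]$ of the form $\{h,j_1,j_2\}$ with $h\in\{k,k+1\}$ and $\{j_1,j_2\}\in\{\{i,i+2\},\{i,i+3\},\{i+1,i+3\}\}$: a vertex $v$ extends such a clique $C$ to a 4-clique $\{v\}\cup C$ whenever $v$ is non-consecutive in the cycle to every vertex of $C$. The ``forbidden'' set of each 3-clique $C$ (the cyclic neighbors of $C$'s vertices, intersected with $A\setminus T$) is small, and a direct check shows the intersection of these six forbidden sets is empty, so every outside $v$ extends at least one 3-clique to a 4-clique IUC of $G$ that saturates $\sum_{j\in T}x_j=3$ and forces the lifting coefficient to be $0$.

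The main obstacle is the case analysis for the two degenerate configurations $k=i+4$ and $k=|A|+i-2$. In each, only one of $F_1,F_2$ is a fan, and two of the six 3-cliques collapse into induced $P_3$'s because one hub now shares a consecutive pair with a path vertex (for instance, when $k=i+4$, the sets $\{k,i,i+3\}$ and $\{k,i+1,i+3\}$ cease to be cliques). One must verify case by case that the lifting coefficient $\pi_{h'}=1$ is still realized by a surviving triangle, and that the surviving 4-clique extensions still cover every $v\in A\setminus T$. The hypothesis $|A|\ge 8$ is used here to ensure that the cyclic distances invoked by the replacement cliques (in particular between vertices like $i-1$ and $i+5$) remain at least two, keeping enough 4-cliques available.
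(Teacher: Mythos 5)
Your proposal is correct and follows essentially the same route as the paper: both start from the fan inequality of Theorem~\ref{th:fan} for the induced $P_4$ on $\{i,\dots,i+3\}$ with hub taken from $\{k,k+1\}$, lift the second hub with coefficient $1$, and then force all remaining lifting coefficients to vanish by exhibiting saturating IUCs (the paper invokes Theorem~\ref{th:fanwheellift} where you use the six $3$-cliques and their forbidden sets directly). The degenerate checks you defer for $k=i+4$ and $k=|A|+i-2$ do go through and correspond exactly to the paper's separate verification that $\sum_{j=i}^{i+5}x_j\le 3$ is facet-defining: the surviving triangle $\{i+4,i,i+2\}$ realizes $\pi_{h'}=1$, and the clique $\{i,i+2,i+4,i+6\}$ (which needs $|A|\ge 8$) covers the one vertex $i+6$ missed by the hub-$(i+5)$ cliques.
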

\begin{proof}
Observe that $P = \{i,i+1,i+2,i+3\}, \forall i \in A$, induces a path (chain) graph on 4 vertices, and $G[\{k\} \cup P], \forall k \in \{i+5, \ldots, |A|+i-2\}$, is a fan in $G$. In fact, for a fixed vertex $i$, the vertices $i+4$ and $|A|+i-1$ are the only ones in $A \backslash P$ which are not adjacent to all vertices of $P$. Therefore, the inequality $\sum_{j=i}^{i+3} x_j + x_k \leq 3, \forall k \in \{i+5, \ldots, |A|+i-2\}$, is valid for $\mathscr{P}_{_{\!\!\mathcal{IUC}}}(G)$ and induces a facet of $\mathscr{P}_{_{\!\!\mathcal{IUC}}}(G[\{k\} \cup P])$, by Theorem~\ref{th:fan}. First, consider lifting the variable $x_{k+1}$ into $\sum_{j=i}^{i+3} x_j + x_{k} \leq 3$, for some $k \in \{i+5, \ldots, |A|+i-3\}$. Since $\{k,k+1\} \notin E$ and $P \cap N(k+1) = P$, the coefficient of $x_{k+1}$ in the corresponding lifted inequality will be 1 and the inequality $\sum_{j=i}^{i+3} x_j + x_{k}+x_{k+1} \leq 3$ is facet defining for $\mathscr{P}_{_{\!\!\mathcal{IUC}}}(G[\{k,k+1\} \cup P])$. Besides, every other vertex $v \in A \backslash (\{k,k+1\} \cup P)$ has at least one neighbor in $\{k,k+1\}$ and $|P \cap N(v)| \geq 3$, hence the coefficient of $x_v$ vanishes in every sequential lifting of this inequality, by Theorem~\ref{th:fanwheellift}. Therefore, the inequality $\sum_{j=i}^{i+3} x_j + x_{k}+x_{k+1} \leq 3$ induces a facet of $\mathscr{P}_{_{\!\!\mathcal{IUC}}}(G)$, for every $k \in \{i+5, \ldots, |A|+i-3\}$.
It remains to show that, for every $i \in A$, the inequalities $\sum_{j=i}^{i+3} x_j + x_{i+4}+x_{i+5} \leq 3$ and $\sum_{j=i}^{i+3} x_j + x_{|A|+i-2}+x_{|A|+i-1} \leq 3$ are facet-defining for $\mathscr{P}_{_{\!\!\mathcal{IUC}}}(G)$. We present the proof for the former inequality, i.e., 
\begin{equation} \label{eq:deffan}
\sum_{j=i}^{i+5} x_j \leq 3,
\end{equation}
for an arbitrary vertex $i$; the latter then follows by symmetry of the graph structure. It is easy to verify that~\eqref{eq:deffan} is obtained from lifting $x_{i+4}$ into the fan inequality $\sum_{j=i}^{i+3} x_j + x_{i+5} \leq 3$, hence facet-defining for the IUC polytope associated with the corresponding induced subgraph. Besides, every vertex $k \in \{i+7, \ldots, |A|+i-1\}$ is adjacent to the vertex $i+5$, thus the corresponding coefficient vanishes in lifting $x_k$ into~\eqref{eq:deffan}, by Theorem~\ref{th:fanwheellift}. Finally, observe that $\{i,i+2,i+4,i+6\}$ is a clique in $G$, therefore, $\max \{ \sum_{j=i}^{i+5} x_j \: | \: x \in \mathscr{P}_{_{\!\!\mathcal{IUC}}}(G) \text{ and } x_{i+6} = 1\} = 3$, and the coefficient of $x_{i+6}$ also vanishes in lifting this variable into~\eqref{eq:deffan}. The proof is complete.
\end{proof}

Note that the anti-cycle graph on $|A|=6$ vertices does not have an induced (complete) fan, and the inequality $\sum_{j=1}^{6} x_j \leq 3$ is not facet-inducing for the corresponding IUC polytope, previously shown by Theorem~\ref{th:antihole}. Also, for the anti-cycle graph on $|A|=7$ vertices, the inequalities $\sum_{j=1}^{6} x_j \leq 3$ and $\sum_{j=1}^{4} x_j + x_6 +x_7 \leq 3$ are both dominated by the anti-hole inequality $\sum_{j=1}^{7} x_j \leq 3$ of Theorem~\ref{th:antihole}.

\section{The Separation Problems}
\label{sec:separation}
\noindent
In this section, we study the computational complexity of the separation problem for each family of the valid inequalities presented in the previous section for the IUC polytope. 
Given a family of (linear) inequalities $\mathscr{F}$ and a point $x^*$, the corresponding separation problem is to decide whether $x^*$ belongs to the polyhedron defined by $\mathscr{F}$, i.e., if $x^*$ satisfies all inequalities in $\mathscr{F}$, and if not, find an inequality which is violated by $x^*$. Efficient separating subroutines are essential in devising branch-and-cut algorithms for integer (linear) programs; however, our results in this part are negative in that we can show that the separation problems for most of the proposed valid inequalities for the IUC polytope are NP-hard.

Complexity of the separation problem for the family of hole, as well as anti-hole, inequalities for the IUC polytope is yet to be determined, although there is a rich body of literature concerning identification of holes and anti-holes in graphs. In fact, the focus of this line of research has been mostly on holes and anti-holes of odd cardinality due to the Berge's strong perfect graph conjecture~\cite{Berge61WPGC}. According to this conjecture, a graph $G$ is perfect, i.e., the chromatic number of every induced subgraph of $G$ equals its clique number, if and only if $G$ does not contain an odd hole or odd anti-hole. Chudnovsky et al.~\cite{ChudnovskyA06} proved this conjecture, and Chudnovsky et al.~\cite{chudnovskyA2005} proposed an $\mathcal{O}(n^9)$-time algorithm to test if a graph has an odd hole or odd anti-hole; hence deciding if it is perfect. However, it remained an open problem whether or not it is polynomial-time decidable if a graph has an odd hole (equivalently, an odd anti-hole) until very recently that Chudnovsky et al.~\cite{chudnovskyA2019} presented an $\mathcal{O}(n^9)$-time algorithm for it. It is also known that the problem of deciding if a graph has a hole (equivalently, an anti-hole) of even cardinality is tractable~\cite{confortiA2002}, and the most efficient algorithm proposed for this problem has a time complexity of $\mathcal{O}(n^{11})$~\cite{changA2015}. On the other hand, it is NP-complete to decide if a graph has an odd (resp. even) hole containing a given vertex~\cite{bienstock1991,bienstock1992corrigendum,changA2015}. 
It is clear---due to feasibility of $x^*=\frac{2}{3}{\bf 1}$ to the fractional IUC polytope---that the separation problem for the family of hole inequalities~\eqref{eq:hole} for the IUC polytope is at least as hard as determining if a graph has a hole whose cardinality is not a multiple of 3, for which we are not aware of an efficient algorithm. Similarly, the feasibility of $x^*=\frac{1}{2}{\bf 1}$ to the fractional IUC polytope implies that the separation problem for the family of anti-hole inequalities~\eqref{eq:antihole} for the IUC polytope is at least as hard as deciding if a graph has an odd hole. It should be mentioned that the separation problem for the family of odd-hole inequalities for the vertex packing polytope (equivalently, odd-anti-hole inequalities for the clique polytope) is polynomial-time solvable~\cite{GLS}; however, this result does not automatically extend to the family of odd-anti-hole inequalities for the IUC polytope. 

In the rest of this section, we show that the separation problems for the families of star, double-star, fan and wheel inequalities for the IUC polytope are all NP-hard.
\begin{theorem} \label{th:starSeparation}
The separation problem for the family of star inequalities~\eqref{eq:star} for the IUC polytope is NP-hard.
\end{theorem}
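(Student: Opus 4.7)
The plan is to reduce from the \textsc{Maximum Independent Set} problem, which is NP-hard. Given an instance $(G', k)$ with $k \geq 3$, I would construct in polynomial time a graph $G$ and a rational point $x^* \in [0,1]^{|V(G)|}$ such that some star inequality~\eqref{eq:star} is violated at $x^*$ if and only if $\alpha(G') \geq k$.

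The construction takes $G$ to be $G'$ augmented by a universal vertex $h$ adjacent to every vertex of $G'$, and sets $x^*_i = 1/2$ for every $i \in V(G')$ and $x^*_h = \alpha$ for a carefully chosen rational $\alpha \in (1/2, 1)$. I would then analyze the two possible types of hubs. For any candidate hub $v \in V(G')$ with leaf set $I$ of size at least two, the set $I$ must lie entirely in $V(G')$, since $h$ is adjacent to every such vertex and cannot be a leaf alongside another vertex of $V(G')$; the uniform value $1/2$ then forces the left-hand side of~\eqref{eq:star} to equal $|I| - 1/2$, which precludes any violation. For the hub $h$, the leaf set $I$ ranges over the independent sets of $G[N(h)] = G'$ of cardinality at least two, and~\eqref{eq:star} simplifies to the inequality $\alpha > |I|/(2(|I|-1))$.

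Writing $f(m) = m/(2(m-1))$, which is strictly decreasing on $m \geq 2$, I would pick $\alpha$ strictly between $f(k)$ and $f(k-1)$; a short computation gives $f(k-1) - f(k) = 1/(2(k-1)(k-2))$, so a rational $\alpha$ in this interval with an $\mathcal{O}(\log k)$-bit encoding exists, for instance $\alpha = f(k) + 1/(4(k-1)(k-2))$. With this choice, a violated star inequality at $x^*$ exists precisely when $G'$ contains an independent set of cardinality at least $k$, completing the reduction.

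The main technical subtlety is the double calibration of parameters: the choice $\beta = 1/2$ is needed so that the coefficient $2\beta - 1$ of $|I|$ vanishes for every hub in $V(G')$, silencing that family of potentially-violated inequalities, while $\alpha$ must be placed in the narrow interval between consecutive values of $f$ so that the hub-$h$ violation triggers at exactly the MIS threshold. Both requirements fit together thanks to the monotonicity of $f$, but the reduction would break if $\beta$ were, say, $2/3$, since then small independent sets under some hub in $V(G')$ could produce spurious violations unrelated to $\alpha(G')$.
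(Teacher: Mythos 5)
Your reduction is correct and is essentially the paper's: both reduce from \textsc{Independent Set} by appending a universal vertex $h$ and asking whether the resulting point violates some star inequality, with the non-$h$ hubs disposed of by the same observation that $h$ cannot be a leaf. The only difference is the numerical calibration --- the paper sets $x^*_h=1$ and $x^*_i=\tfrac{1}{k-1}$ on the leaves (so the threshold sits in the leaf values and any violation at a hub $v\in V$ transfers to $h$ by monotonicity in the hub coordinate), whereas you set the leaves to $\tfrac12$ to silence the non-$h$ hubs outright and encode the threshold in $x^*_h\in\bigl(f(k),f(k-1)\bigr)$; both calibrations work, the paper's being marginally simpler to verify.
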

\begin{proof}
The reduction is from the \textsc{Independent Set} problem ($k$-IS), which asks if a simple, undirected graph $G=(V,E)$ has an independent set of cardinality at least $k$. Without loss of generality, we assume $k \geq 3$. Corresponding to an instance $(G,k)$ of $k$-IS, consider an instance $(G',x^*)$ of the separation problem for the family of star inequalities~\eqref{eq:star}, where 
$G'=(V',E')$ is constructed by appending a new vertex $h$ to $G$ adjacent to all vertices of $V$, i.e.,
$$
\begin{aligned}
V'&=V \cup \{h\},\\
E'&=E \cup \{ \{h,i\},\forall i \in V \},
\end{aligned}
$$
and $x^*$ is defined as follows:
$$
\begin{aligned}
x^*_h &= 1, \\
x^*_i &= \frac{1}{k-1},~\forall i \in V.
\end{aligned}
$$
Note that $k \geq 3$ implies $x^* \in \mathscr{P}^{F}_{_{\!\!\mathcal{IUC}}}(G')$. We claim that $(G,k)$ is a positive instance of $k$-IS if and only if $x^*$ violates a star inequality in $G'$.
Let $I^*$ be an independent set of cardinality $k$ in $G$. Then, $S=\{h\} \cup I^*$ induces a star subgraph in $G'$, and 
$$
\sum_{i \in I^*} x^*_i + (|I^*|-1) x^*_h = \frac{k}{k-1}+(k-1)=k+\frac{1}{k-1} > k=|I^*|,
$$
which indicates violation of the star inequality associated with $G'[S]$. To prove the reverse, suppose that $x^*$ violates the inequality associated with a star subgraph induced by $S' = \{v\} \cup I'$ in $G'$, for some $I' \subseteq V'$ and $v \in V' \backslash I'$. Evidently, $I' \subseteq V$; since $x^*_v\leq 1=x^*_h, \forall v \in V'$, this also indicates violation of the star inequality associated with $G'[\{h\} \cup I']$, i.e., 
$$
\sum_{i \in I'} x^*_i + (|I'|-1) x^*_h = \frac{|I'|}{k-1} + (|I'|-1)  > |I'|,
$$
which implies $|I'| > k - 1$, or equivalently $|I'| \geq k$. The proof is complete.
\end{proof}
\begin{theorem} \label{th:DstarSeparation}
The separation problem for the family of double-star inequalities~\eqref{eq:doublestar} for the IUC polytope is NP-hard.
\end{theorem}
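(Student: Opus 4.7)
The plan is to mimic the reduction used for the star inequality in Theorem~\ref{th:starSeparation} but add a second ``dummy'' hub so that the violated inequalities are genuinely of double-star type. I would reduce from the \textsc{Independent Set} problem $(G,k)$ with $k\geq 3$. Construct $G'=(V',E')$ by introducing two new vertices $h$ and $u$, connecting each of them to every vertex of $V$ but leaving $\{h,u\}\notin E'$; formally, $V'=V\cup\{h,u\}$ and $E'=E\cup\{\{h,i\},\{u,i\}:i\in V\}$. Then define the candidate point $x^*\in[0,1]^{V'}$ by
\[
x^*_h=1,\qquad x^*_u=0,\qquad x^*_i=\tfrac{1}{k}\ \text{for all}\ i\in V.
\]

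The first routine step is to confirm that $x^*$ lies in $\mathscr{P}^{F}_{_{\!\!\mathcal{IUC}}}(G')$, so that the separation question is non-trivial. The open triangles in $G'$ come in four flavours (three vertices in $V$; one of $h,u$ with two non-adjacent vertices of $V$; and $\{h,u,i\}$ for $i\in V$), and a quick arithmetic check with $k\geq 3$ shows every OT sum is at most $2$. The second step is the ``positive'' direction: if $I\subseteq V$ is an independent set with $|I|\geq k+1$, then $\{h\}\cup I$ and $\{u\}\cup I$ induce star subgraphs in $G'$ sharing the leaf set $I$ and having non-adjacent hubs, so the double-star inequality $\sum_{i\in I}x_i+(|I|-1)x_h+x_u\leq |I|$ applies; evaluating at $x^*$ gives $|I|/k+(|I|-1)+0$, which exceeds $|I|$ precisely when $|I|>k$.

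The third and harder step is the converse: show that \emph{no other} double-star inequality of $G'$ is violated by $x^*$. I would enumerate the possible hub pairs, using the fact that a double-star requires the hubs to be non-adjacent in $G'$, and that the non-edges of $G'$ are exactly the pair $\{h,u\}$ together with the non-edges of $G$. For hub pair $\{h,u\}$ with leaf set $I\subseteq V$, both symmetric forms in~\eqref{eq:doublestar} reduce to the expressions above, and only the first is ever violated, giving exactly the condition $|I|>k$. For hub pairs $\{h',u'\}\subset V$ with $\{h',u'\}\notin E$, the leaf set $I$ must consist of common neighbours and may optionally include $h$ or $u$; I would split into the sub-cases $I\subseteq V$, $h\in I$, $u\in I$, $\{h,u\}\subseteq I$ and check in each that the double-star sum is bounded by $|I|$ whenever $|I|\geq 2$ and $k\geq 3$. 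The crucial arithmetic is that wherever a ``leaf'' of value $1/k$ appears with coefficient $|I|-1$, the resulting term $(|I|-1)/k$ cannot compensate for the shortfall contributed by $x^*_u=0$, while wherever the large value $x^*_h=1$ appears with a \emph{small} coefficient (either as the other hub, or as a leaf), the gain is bounded by a constant independent of $|I|$ and is absorbed by $|I|$ on the right-hand side.

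The main obstacle, as usual in this sort of reduction, will be the bookkeeping in step three: the double-star family has many instantiations in the enlarged graph $G'$, and one must rule out violations coming from ``wrong'' hubs or from leaf sets containing the auxiliary vertices $h,u$. Once these cases are exhausted, the equivalence ``$x^*$ violates a double-star inequality $\iff G$ admits an independent set of cardinality at least $k+1$'' follows immediately, and since $x^*$ and $G'$ are constructed in polynomial time, the NP-hardness of \textsc{Independent Set} transfers to the separation problem for~\eqref{eq:doublestar}, completing the proof.
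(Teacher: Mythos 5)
Your proposal is correct and follows essentially the same reduction as the paper: the graph $G'$ with two universal, mutually non-adjacent hubs $h,u$ is identical, and the only difference is that the paper sets $x^*_u=\tfrac{1}{k}$ (so violation corresponds to an independent set of size $\geq k$) whereas you set $x^*_u=0$ (shifting the threshold to $k+1$), which is an equally valid parameterization. Your explicit case analysis of the possible hub pairs and leaf sets in $G'$ is in fact more detailed than the paper's, which simply invokes ``the same argument'' as in the star case.
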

\begin{proof}
The proof is similar to Theorem~\ref{th:starSeparation}. Corresponding to an instance $(G,k)$ of $k$-IS, consider an instance $(G'=(V',E'),x^*)$ of the separation problem for the family of double-star inequalities~\eqref{eq:doublestar} constructed as follows:
$$
\begin{aligned}
&V'=V \cup \{h,u\},\\
&E'=E \cup \{ \{h,i\},\forall i \in V \} \cup \{ \{u,i\},\forall i \in V \},\\
& x^*_h = 1, \\
& x^*_i = \frac{1}{k},~\forall i \in V \cup \{u\}.
\end{aligned}
$$
Then, the same argument leads to the fact that $(G,k)$ is a positive instance of $k$-IS if and only if $x^*$ violates a double-star inequality in $G'$.
\end{proof}
\begin{theorem} \label{th:fanSeparation}
The separation problem for the family of fan inequalities~\eqref{eq:fan} for the IUC polytope is NP-hard.
\end{theorem}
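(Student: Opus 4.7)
The plan is to reduce from the \textsc{Longest Induced Path} problem, which asks whether a given graph $G=(V,E)$ contains an induced subgraph isomorphic to the path $P_k$, where $k$ is part of the input. This problem is well known to be NP-complete, and stays so under the restriction $k \geq 5$. Following the template of Theorems~\ref{th:starSeparation} and~\ref{th:DstarSeparation}, the idea is to append a universal hub vertex to $G$ and assign coordinates so that a violated fan inequality in the enlarged graph corresponds to a long induced path in the original graph.

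Given an instance $(G,k)$ with $k \geq 5$, I will build the graph $G' = (V \cup \{h\},\; E \cup \{\{h,i\}: i \in V\})$ and the point $x^*$ defined by $x^*_h = 1$ and $x^*_i = \frac{2}{k-1}$ for every $i \in V$; the assumption $k \geq 5$ ensures $x^* \in \mathscr{P}^{F}_{_{\!\!\mathcal{IUC}}}(G')$. The claim to be established is that $x^*$ violates some inequality of the form~\eqref{eq:fan} in $G'$ if and only if $G$ contains an induced $P_k$. The forward direction is straightforward: an induced $P_k$ on vertex set $P \subseteq V$ produces a fan $\{h\} \cup P$ in $G'$, and inserting $x^*$ into the corresponding fan inequality~\eqref{eq:fan} (with $h$ playing the role of the hub) yields left-hand side minus right-hand side equal to $k \cdot \frac{2}{k-1} - 2 = \frac{2}{k-1} > 0$.

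For the converse, suppose $x^*$ violates~\eqref{eq:fan} at some induced fan $\{v\} \cup Q$ in $G'$ with hub $v$ and $|Q| \geq 4$. Since $h$ is adjacent to every vertex of $V$, the presence of $h$ in $Q$ would give $h$ at least $|Q|-1 \geq 3$ neighbors inside $G'[Q]$, destroying the induced-path structure; hence $h \notin Q$ and $Q \subseteq V$ induces a path in $G$ as well. It remains to argue $|Q| \geq k$. With $M \geq 0$ denoting the coefficient of the hub variable in~\eqref{eq:fan} (so the right-hand side equals $M + 2$), the case $v = h$ reduces at once to $|Q| \cdot \frac{2}{k-1} > 2$. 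In the case $v \in V$, the violation $(|Q| + M) \cdot \frac{2}{k-1} > M + 2$ rearranges to
\[
|Q|\cdot\tfrac{2}{k-1} \;>\; M\bigl(1-\tfrac{2}{k-1}\bigr) + 2 \;\geq\; 2,
\]
since $\frac{2}{k-1} \leq \frac{1}{2}$ for $k \geq 5$ and $M \geq 0$. Either way $|Q| > k-1$, so $|Q| \geq k$ and $G$ contains an induced $P_k$.

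The most delicate point of the argument, and the main obstacle to handle carefully, is ruling out spurious violations coming from fans whose hub lies in $V$ rather than at the auxiliary vertex $h$; this is precisely what the slack bound $M\bigl(1-\tfrac{2}{k-1}\bigr) \geq 0$ controls, and it is what dictates the choice $k \geq 5$ (so that $\tfrac{2}{k-1} \leq 1$, with enough room left over to keep $x^* \in \mathscr{P}^{F}_{_{\!\!\mathcal{IUC}}}(G')$). The construction is clearly polynomial in the size of $(G,k)$, so the reduction establishes NP-hardness of the separation problem for the fan inequalities.
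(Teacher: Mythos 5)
Your proposal is correct and follows essentially the same reduction as the paper: the paper also reduces from the \textsc{Induced Path} problem with $k \geq 5$, appends a universal hub $h$, and uses exactly the point $x^*_h = 1$, $x^*_i = \tfrac{2}{k-1}$. The only difference is presentational: where the paper dismisses fans with hub in $V$ by noting that replacing the hub with $h$ only increases the left-hand side, you handle that case by direct algebra with the slack term $M\bigl(1-\tfrac{2}{k-1}\bigr)\geq 0$, which is a slightly more explicit version of the same step.
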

\begin{proof}
We present a reduction from the \textsc{Induced Path} problem ($k$-IP), which is to decide if a simple, undirected graph $G=(V,E)$ has an induced path subgraph (equivalently, a chordless path) on at least $k$ vertices. Similar to $k$-IS, this problem is among the classical NP-complete problems listed by Garey and Johnson~\cite{GareyJohnson79}. The proof idea is similar to that for Theorem~\ref{th:starSeparation}. Let $k \geq 5$, and corresponding to an instance $(G,k)$ of $k$-IP, consider an instance $(G'=(V',E'),x^*)$ of the separation problem for the family of fan inequalities~\eqref{eq:fan} constructed as follows:
$$
\begin{aligned}
&V'=V \cup \{h\},\\
&E'=E \cup \{ \{h,i\},\forall i \in V \},\\
& x^*_h = 1, \\
& x^*_i = \frac{2}{k-1},~\forall i \in V.
\end{aligned}
$$
Let $P^*$ be the vertex set of a chordless path in $G$ on $k=3q_k+r_k$ vertices. Then, $F=\{h\} \cup P^*$ induces a fan subgraph in $G'$, and
$$
\begin{aligned}
\sum_{i \in P^*} x^*_i + \left ( 2(q_k-1) + \left \lfloor \frac{2(r_k+1)}{3} \right \rfloor \right ) x^*_h &= \frac{2k}{k-1} + 2q_k-2+\left \lfloor \frac{2(r_k+1)}{3} \right \rfloor\\
&=\frac{2}{k-1}+2q_k+\left \lfloor \frac{2(r_k+1)}{3} \right \rfloor\\
&> 2q_k + \left \lfloor \frac{2(r_k+1)}{3} \right \rfloor,
\end{aligned}
$$
which indicates violation of the fan inequality associated with $G'[F]$. Now, suppose that $x^*$ violates a fan inequality in $G'$. Given the magnitude of the components of $x^*$, this also implies violation of the inequality associated with a fan subgraph $G'[\{h\} \cup P']$, for some $P' \subseteq V$, i.e., 
$$
\sum_{i \in P'} x^*_i + \left ( 2(q'-1) + \left \lfloor \frac{2(r'+1)}{3} \right \rfloor \right ) x^*_h > 2q' + \left \lfloor \frac{2(r'+1)}{3} \right \rfloor,
$$
where $|P'|=3q'+r'$. This leads to

$$
\sum_{i \in P'} x^*_i = \frac{2|P'|}{k-1}>2,
$$
or equivalently, $|P'| \geq k$, and completes the proof.
\end{proof}
\begin{theorem} \label{th:wheelSeparation}
The separation problem for the family of wheel inequalities~\eqref{eq:wheel} for the IUC polytope is NP-hard.
\end{theorem}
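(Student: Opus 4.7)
Following the template of Theorems~\ref{th:starSeparation}--\ref{th:fanSeparation}, the plan is to reduce from an NP-complete induced-subgraph-existence problem whose target subgraph matches the facet-producing structure. Since the wheel inequality is built around a chordless cycle, the natural source problem is \textsc{Induced Cycle} ($k$-IC)---given a simple, undirected graph $G$ and an integer $k$, decide whether $G$ contains a chordless cycle (hole) on at least $k$ vertices---which is known to be NP-complete. I would restrict attention to $k \geq 6$, since for $k \leq 5$ the problem is decidable in polynomial time by exhaustive enumeration.

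Given $(G,k)$ with $k \geq 6$, the construction mirrors that of the fan separation proof: form $G' = (V',E')$ with $V' = V \cup \{h\}$ and $E' = E \cup \{\{h,i\} : i \in V\}$ (so $h$ is a universal hub), and set $x^*_h = 1$ and $x^*_i = \frac{2}{k-1}$ for every $i \in V$. A one-line check of the open-triangle inequalities in $G'$ (splitting into triangles that avoid $h$ and those containing $h$) shows $x^* \in \mathscr{P}^F_{_{\!\!\mathcal{IUC}}}(G')$ for every $k \geq 5$. For the forward direction, if $H \subseteq V$ is a hole in $G$ with $|H| \geq k$, then $W = \{h\} \cup H$ induces a wheel in $G'$, and writing $|H| = 3q + r$ a direct substitution into~\eqref{eq:wheel} shows the slack reduces to $\frac{2(|H|-k+1)}{k-1}$, which is strictly positive; hence $x^*$ violates the corresponding wheel inequality.

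The main obstacle---and the step worth executing carefully---is the converse direction: if $x^*$ violates any wheel inequality of $G'$, then some hole of $G$ has cardinality at least $k$. Here the key reduction-specific observation is that $h$ cannot belong to any hole of $G'$, because $h$ is adjacent to every vertex of $V$, so any chordless cycle through $h$ would have length $3$. Consequently every wheel $W = \{z\} \cup H$ in $G'$ has $H \subseteq V$ (a hole in $G$) and hub $z$ equal to either $h$ or to some $v \in V$ with $H \subseteq N_G(v)$. If $z = h$, rearranging~\eqref{eq:wheel} immediately gives $\frac{2|H|}{k-1} > 2$, so $|H| \geq k$, certifying a yes-instance of $k$-IC. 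The remaining case $z = v \in V$ must be ruled out; substituting $x^*_v = x^*_i = \frac{2}{k-1}$ into~\eqref{eq:wheel} with $|H| = 3q+r$ reduces the alleged violation to an inequality of the form $q(6-k) > c_r$ with $c_r \in \{1,2\}$ depending on the residue $r \in \{0,1,2\}$, which fails for every $k \geq 6$. This dichotomy completes the reduction and establishes NP-hardness of the separation problem.
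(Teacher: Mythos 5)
Your proof is correct and follows essentially the same route as the paper, which simply invokes the fan-separation argument verbatim with \textsc{Induced Cycle} in place of \textsc{Induced Path} and the identical universal-hub construction with $x^*_h=1$, $x^*_i=\tfrac{2}{k-1}$. Your converse direction is in fact more carefully executed than the paper's (you explicitly observe that $h$ lies on no hole of $G'$ and rule out wheels whose hub lies in $V$, a case the paper dispatches with ``given the magnitude of the components of $x^*$''); the only nit is that for $r=2$ the hub-in-$V$ case reduces to $2q(6-k)>k-3$ rather than the stated form, but the conclusion that it fails for $k\geq 6$ still holds.
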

\begin{proof}
The proof is almost identical to the proof of Theorem~\ref{th:fanSeparation}. The difference is that, instead of $k$-IP, the reduction is from the \textsc{Induced Cycle} problem, which is also known to be NP-complete~\cite{GareyJohnson79}. The \textsc{Induced Cycle} problem ($k$-IC) is to decide if a simple, undirected graph $G=(V,E)$ has a chordless cycle on at least $k$ vertices. Then, using the same argument on an instance $(G'=(V',E'),x^*)$ of $k$-IC constructed as stated in the proof of Theorem~\ref{th:fanSeparation}, one can show that the separation problem for the family of wheel inequalities~\eqref{eq:wheel} is NP-hard.
\end{proof}

It is worth noting that, from the viewpoint of parameterized complexity, the \textsc{Independent Set}, \textsc{Induced Path}, and \textsc{Induced Cycle} problems---parameterized by the solution size---are all W[1]-complete~\cite{chenA2007}, implying that it is unlikely that there exists a fixed-parameter-tractable (fpt) algorithm for either of them. This result immediately implies that the separation problems for the families of star, double-star, fan and wheel inequalities remain intractable even if the separation problem is defined with respect to a subset of inequalities from the corresponding family with a fixed (small) number of variables.  
\section{Computational Experiments}
\label{sec:experiment}
\noindent
In this section, we present the results of our computational experiments. We have conducted two sets of experiments to evaluate the effectiveness of incorporating the proposed valid inequalities in integer (linear) programming solution approaches for the {maximum IUC} problem.

\begin{table}[!b]
\centering{
\caption{First set of experiments: characteristics of the test instances.} \label{tab41}
\begin{tabular}{lrrrcccccc}
\toprule
 & & & & \multicolumn{6}{c}{Number of principal subgraphs} \\
\cmidrule(r){5-10} 
\multicolumn{1}{c}{{Name}} & \multicolumn{1}{c}{{$|V|$}} & \multicolumn{1}{c}{{$|E|$}} & \multicolumn{1}{c}{{$|\Lambda|$}} & \multicolumn{1}{c}{{Cycle}} & \multicolumn{1}{c}{{A-cycle}} & \multicolumn{1}{c}{{Star}} & \multicolumn{1}{c}{{D-star}} & \multicolumn{1}{c}{{Fan}} & \multicolumn{1}{c}{{Wheel}}\\
\hline
\texttt{rnd\#1} & 204 & 550 & 2,464 & 3 & 2 & 2 & 3 & 7 & 4 \\
\texttt{rnd\#2} & 213 & 630 & 2,857 & 2 & 4 & 6 & 1 & 3 & 5 \\
\texttt{rnd\#3} & 196 & 510 & 2,273 & 3 & 2 & 3 & 5 & 6 & 2 \\
\texttt{rnd\#4} & 194 & 470 & 2,056 & 6 & 2 & 3 & 3 & 2 & 5 \\
\texttt{rnd\#5} & 211 & 623 & 3,078 & 4 & 5 & 5 & 3 & 3 & 1 \\
\texttt{rnd\#6} & 194 & 617 & 2,673 & 2 & 8 & 2 & 4 & 2 & 3 \\
\texttt{rnd\#7} & 193 & 488 & 2,014 & 2 & 3 & 3 & 4 & 6 & 3 \\
\texttt{rnd\#8} & 198 & 506 & 2,226 & 5 & 2 & 4 & 4 & 4 & 2 \\
\texttt{rnd\#9} & 194 & 496 & 2,293 & 3 & 2 & 5 & 3 & 3 & 5 \\
\texttt{rnd\#10} & 194 & 472 & 1,998 & 2 & 3 & 5 & 6 & 3 & 2 \\
\texttt{rnd\#11} & 203 & 567 & 2,644 & 4 & 3 & 1 & 5 & 5 & 3 \\
\texttt{rnd\#12} & 188 & 466 & 2,143 & 2 & 1 & 2 & 5 & 8 & 3 \\
\texttt{rnd\#13} & 202 & 514 & 2,537 & 4 & 2 & 4 & 6 & 4 & 1 \\
\texttt{rnd\#14} & 212 & 497 & 2,070 & 7 & 2 & 5 & 2 & 3 & 2 \\
\texttt{rnd\#15} & 206 & 575 & 2,536 & 4 & 3 & 3 & 2 & 5 & 4 \\
\texttt{rnd\#16} & 201 & 550 & 2,608 & 2 & 3 & 2 & 7 & 2 & 5 \\
\texttt{rnd\#17} & 204 & 586 & 2,751 & 4 & 4 & 3 & 5 & 2 & 3 \\
\texttt{rnd\#18} & 200 & 581 & 2,813 & 3 & 5 & 4 & 3 & 2 & 4 \\
\texttt{rnd\#19} & 204 & 562 & 2,422 & 5 & 4 & 2 & 1 & 5 & 4 \\
\texttt{rnd\#20} & 203 & 550 & 2,268 & 5 & 3 & 2 & 3 & 7 & 1 \\
\bottomrule
\end{tabular}
}
\end{table}

In the first set of experiments, we investigated the effectiveness of each class of valid inequalities, as well as their combinations, using a set of graphs whose structures were known {\em a priori}. We generated a set of 20 undirected graphs, each of which is essentially a collection of 21 {\em principal} subgraphs that are sparsely connected to each other. Every principal subgraph of each instance has been randomly selected to be a cycle, anti-cycle, star, double-star, fan or wheel, and its number of vertices is given by the floor of a number drawn from a normal distribution with mean 10 and standard deviation 1. This choice of parameters yields graphs with around 200 vertices. The principal subgraphs have been connected to one another by a sparse set of random edges. To do this, an edge has been generated between every pair of vertices from two different principal subgraphs with a probability 0.01 independent of the others. Table~\ref{tab41} displays the characteristics of these instances, including the number of open triangles $|\Lambda|$ and the number of principal subgraphs for each structure type. In this table, ``A-cycle'' and ``D-star'' stand for anti-cycle and double-star, respectively.

We solved the {maximum IUC} problem on each instance using the base formulation~\eqref{eq:IPiuc}, i.e., considering only the OT inequalities, as well as using this formulation strengthened by the valid inequalities corresponding to its principal subgraphs. For each principal subgraph, we generated only one inequality, except for the double-star subgraphs where both inequalities of~\eqref{eq:doublestar} were generated. Recall that anti-cycle, star, double-star, fan and wheel graphs may generate several (potentially facet-defining) valid inequalities of the same or different types; yet, we restricted the incorporated inequalities to the whole structures, i.e., the principal subgraphs themselves, so we could have a better measure of the effectiveness of each family. We performed the experiments using ILOG/CPLEX 12.7 solver on a Dell Precision Workstation T7500$^{\tiny \circledR}$ machine with eight 2.40 GHz Intel Xeon$^{\tiny \circledR}$ processors and 12 GB RAM. We used the default settings of the solver except for the automatic cut generation, which was off in our first set of experiments.

\begin{table}[t]
\centering{
\caption{First set of experiments: improvement (in solution time) for each family of the valid inequalities.} \label{tab42}
\begin{tabular}{lrrrrrrrr}
\toprule
 & & \multicolumn{4}{c}{Solution time (CPU sec.)} & \multicolumn{3}{c}{Impv. per ineq. (\%)}\\
\cmidrule(r){3-6} \cmidrule(r){7-9} 
\multicolumn{1}{c}{{Name}} & \multicolumn{1}{c}{{opt.}} & \multicolumn{1}{c}{{Base}} & \multicolumn{1}{c}{{+HA}} & \multicolumn{1}{c}{{+SD}} & \multicolumn{1}{c}{{+FW}} & \multicolumn{1}{c}{{+HA}} & \multicolumn{1}{c}{{+SD}} & \multicolumn{1}{c}{{+FW}} \\
\hline
\texttt{rnd\#1} & 111 & 3,094.30  & 989.39   & 1,804.45 & 715.48   & {\bf 13.6} & 5.2  & 7.0 \\
\texttt{rnd\#2} & 118 & 1,103.08  & 765.27   & 791.47   & 369.09   & 5.1  & 3.5  & {\bf 8.3} \\
\texttt{rnd\#3} & 114 & 90.77     & 87.08    & 92.72    & 21.45    & 0.8  & $-$0.2 & {\bf 9.5} \\
\texttt{rnd\#4} & 110 & 197.38    & 180.75   & 177.79   & 108.90   & 1.1  & 1.1  & {\bf 6.4} \\
\texttt{rnd\#5} & 118 & 1,218.80  & 471.90   & 510.82   & 486.17   & 6.8  & 5.3  & {\bf 15.0} \\
\texttt{rnd\#6} & 103 & 9,820.76  & 250.19   & 5,012.75 & 2,299.58 & 9.7  & 4.9  & {\bf 15.3} \\
\texttt{rnd\#7} & 110 & 173.19    & 61.19    & 120.13   & 82.52    & {\bf 12.9} & 2.8  & 5.8 \\
\texttt{rnd\#8} & 115 & 115.95    & 54.36    & 75.58    & 87.51    & {\bf 7.6}  & 2.9  & 4.1 \\
\texttt{rnd\#9} & 113 & 95.45     & 83.50    & 51.37    & 22.74    & 2.5  & 4.2  & {\bf 9.5} \\
\texttt{rnd\#10} & 114 & 48.18     & 11.76    & 43.24    & 18.51    & {\bf 15.1} & 0.6  & 12.3 \\
\texttt{rnd\#11} & 114 & 438.31    & 197.32   & 471.63   & 182.44   & {\bf 7.9}  & $-$0.7 & 7.3 \\
\texttt{rnd\#12} & 110 & 73.44     & 20.98    & 53.08    & 13.16    & {\bf 23.8} & 2.3  & 7.5 \\
\texttt{rnd\#13} & 123 & 24.86     & 14.33    & 13.22    & 11.37    & 7.1  & 2.9  & {\bf 10.9} \\
\texttt{rnd\#14} & 124 & 156.53    & 89.51    & 84.94    & 90.49    & 4.8  & 5.1  & {\bf 8.4} \\
\texttt{rnd\#15} & 116 & 654.23    & 320.22   & 521.43   & 289.63   & {\bf 7.3}  & 2.9  & 6.2 \\
\texttt{rnd\#16} & 109 & 5,259.73  & 2,170.50 & 1,271.24 & 1,342.31 & {\bf 11.7} & 4.7  & 10.6 \\
\texttt{rnd\#17} & 111 & 3,870.85  & 1,096.95 & 944.77   & 1,126.95 & 9.0  & 5.8  & {\bf 14.2} \\
\texttt{rnd\#18} & 110 & 1,085.21  & 216.91   & 602.61   & 376.10   & 10.0 & 4.4  & {\bf 10.9} \\
\texttt{rnd\#19} & 110 & 10,496.40 & 1,971.21 & 3,847.48 & 708.52   & 9.0  & {\bf 15.8} & 10.4 \\
\texttt{rnd\#20} & 114 & 540.62    & 90.82    & 382.95   & 359.84   & {\bf 10.4} & 3.6  & 4.2 \\
\bottomrule
\end{tabular}
}
\end{table}

Table~\ref{tab42} presents the solution time for each instance using the base formulation~\eqref{eq:IPiuc} as well as the strengthened formulations by the individual classes of the proposed valid inequalities. In this table, ``opt.'' denotes the optimal solution value for each instance. The column ``Base'' reports the solution time under the base formulation~\eqref{eq:IPiuc}. The next three columns, named ``+HA'', ``+SD'', and ``+FW'', present the solution time for each instance when the base formulation is strengthened by the hole~\eqref{eq:hole} and anti-hole~\eqref{eq:antihole} inequalities, star~\eqref{eq:star} and double-star~\eqref{eq:doublestar} inequalities, and fan~\eqref{eq:fan} and wheel~\eqref{eq:wheel} inequalities associated with its principal subgraphs, respectively. The last three columns of this table show the percentage improvement in the solution time for each instance per appended valid inequality; that is, 
$$
\frac{\text{Base solution time}-\text{Improved solution time}}{\text{Base solution time} \times \text{Number of incorporated inequalities}} \times 100.$$
The boldface numbers in these columns show the highest improvement for each instance. Note that the highest per-inequality improvement does not necessarily imply the best solution time, due to different number of principal subgraphs of each type. On 10 instances (out of 20), the most per-inequality improvement was attained by the FW family (fan and wheel inequalities). With a similar performance, the HA family (hole and anti-hole inequalities) generated the highest per-inequality improvement on 9 instances. On the other hand, the SD family (star and double-star inequalities) showed an inferior performance compared to the two former families. In fact, on two instances {\tt rnd\#3} and {\tt rnd\#11}, incorporating these inequalities into the base formulation even resulted in a slight increase in the solution time. This observation may be justified by noting that star and double-star subgraphs generate exponentially many (with respect to the size of the subgraph) known facet-defining valid inequalities for the IUC polytope associated with the supergraph, whereas this number for the other two families is much smaller, and we have incorporated only a couple of these inequalities from each family. 

In the next phase of our experiments, we also examined the effect of adding different combinations of these valid inequalities on the solution time of the test instances. Table~\ref{tab43} presents the corresponding results. In this table, ``+HA\&SD'' shows the solution time when the base formulation has been strengthened by the combination of HA and SD families. Similarly, ``+HA\&FW'' and ``+SD\&FW'' show the results for the combination of HA and FW families, and SD and FW families, respectively. The last column of this table, i.e., ``+All'', presents the solution time for each instance obtained by incorporating all three HA, SD, and FW families in the base formulation~\eqref{eq:IPiuc}. Note that the total number of appended inequalities in this case is about 1\% of the number of OT inequalities for each instance. The boldface numbers in this table show the best solution time obtained for each instance. Even by incorporating a rather small number of valid inequalities, the solution times are substantially better than those obtained due to incorporating individual families of the valid inequalities, presented in Table~\ref{tab42}. Interestingly, for all instances, the best solution time was attained when the FW family was incorporated, and only on 11 instances (out of 20), the best solution time was due to incorporating all valid inequalities. 
\begin{table}[t]
\centering{
\caption{First set of experiments: results of incorporating different combinations of valid inequalities.} \label{tab43}
\begin{tabular}{lrrrrr}
\toprule
 & \multicolumn{5}{c}{Solution time (CPU sec.)} \\
\cmidrule(r){2-6} 
\multicolumn{1}{c}{{Name}} & \multicolumn{1}{c}{{Base}} & \multicolumn{1}{c}{{+HA\&SD}} & \multicolumn{1}{c}{{+HA\&FW}} & \multicolumn{1}{c}{{+SD\&FW}} & \multicolumn{1}{c}{{+All}} \\
\hline
\texttt{rnd\#1} & 3,094.30  & 477.81   & 345.94 & 291.17   & {\bf 167.73} \\
\texttt{rnd\#2} & 1,103.08  & 923.40   & 637.46 & {\bf 237.37}   & 354.18 \\
\texttt{rnd\#3} & 90.77     & 29.96    & 19.81  & 25.13    & {\bf 16.91}  \\
\texttt{rnd\#4} & 197.38    & 158.29   & 103.24 & {\bf 91.26}    & 103.08 \\
\texttt{rnd\#5} & 1,218.80  & 326.88   & 356.16 & 583.89   & {\bf 242.28} \\
\texttt{rnd\#6} & 9,820.76  & 274.15   & {\bf 119.29} & 1,346.66 & 157.43 \\
\texttt{rnd\#7} & 173.19    & 32.46    & 36.02  & 58.75    & {\bf 23.43}  \\
\texttt{rnd\#8} & 115.95    & 26.17    & {\bf 19.85}  & 84.22    & 25.78  \\
\texttt{rnd\#9} & 95.45     & 68.12    & 17.80  & 19.70    & {\bf 15.05}  \\
\texttt{rnd\#10} & 48.18     & 11.85    & 15.05  & 9.29     & {\bf 7.47}   \\
\texttt{rnd\#11} & 438.31    & 152.38   & {\bf 123.44} & 158.43   & 131.15 \\
\texttt{rnd\#12} & 73.44     & 28.09    & {\bf 10.14}  & 14.47    & 14.18  \\
\texttt{rnd\#13} & 24.86     & 7.28     & {\bf 1.99}   & 8.93     & 8.40   \\
\texttt{rnd\#14} & 156.53    & 97.76    & 50.99  & 87.31    & {\bf 47.58}  \\
\texttt{rnd\#15} & 654.23    & 287.86   & 147.35 & 216.37   & {\bf 103.77} \\
\texttt{rnd\#16} & 5,259.73  & 722.94   & 665.09 & 414.69   & {\bf 157.23} \\
\texttt{rnd\#17} & 3,870.85  & 846.85   & 635.40 & 510.96   & {\bf 332.18} \\
\texttt{rnd\#18} & 1,085.21  & 138.07   & 127.95 & 223.97   & {\bf 102.35} \\
\texttt{rnd\#19} & 10,496.40 & 1,391.08 & {\bf 283.62} & 546.51   & 354.59 \\
\texttt{rnd\#20} & 540.62    & 93.92    & {\bf 61.29}  & 191.92   & 68.03 \\
\bottomrule
\end{tabular}
}
\end{table}

The results of our first set of experiments reveal the merit of the proposed valid inequalities. In practice, however, the structure of an input graph is not known {\em a priori}. Therefore, integer (linear) programming solution methods of the {maximum IUC} problem should rely on subgraph-detection routines in order to generate these valid inequalities. In our second set of experiments, we investigated efficacy of the proposed valid inequalities under this setting. We generated a set of (Erd\"{o}s-R\'{e}nyi) random graphs $G(n,p)$ with $n=100$ and $p$ varying from $0.05$ to $0.95$. In these graphs, $n$ is the number of vertices and an edge exists between every pair of vertices with a probability $p$ independent of the others. The characteristics of our second set of test instances, including the number of open triangles $|\Lambda|$ for each instance, are shown in Table~\ref{tab44}. In this table, the numerical part of the name of each instance shows the corresponding value of $p$, which is also the expected edge density of the graph. In light of the results of our first set of experiments, as well as the complexity results of the previous section, we just considered 4-hole, wheel and fan cutting planes in this part of our experiments, and applied them all to the root node of the branch-and-cut tree. It should be mentioned that developing a full-blown branch-and-cut algorithm for the {maximum IUC} problem is out of the scope this paper, and the purpose of our experiments was to test effectiveness of employing these inequalities using a straightforward strategy, which is described below.
\begin{table}[t]
\centering{
\caption{Second set of experiments: cut-generation characteristics.} \label{tab44}
\begin{tabular}{lrrrrrrrr}
\toprule
 & & & & \multicolumn{4}{c}{\# Valid inequalities (cuts)} & \multicolumn{1}{c}{Cut  generation}\\
\cmidrule(r){5-8}
\multicolumn{1}{c}{{Name}} & \multicolumn{1}{c}{{$|V|$}} & \multicolumn{1}{c}{{$|E|$}} & \multicolumn{1}{c}{{$|\Lambda|$}} & \multicolumn{1}{c}{{OT}} & \multicolumn{1}{c}{{4-hole}} & \multicolumn{1}{c}{{Wheel}} & \multicolumn{1}{c}{{Fan}} & \multicolumn{1}{c}{{(CPU sec.)}} \\
\hline
\texttt{RND\_0.05} & 100 & 255   & 1,204  & 956   & 70     & 0     & 0     & 0.05   \\
\texttt{RND\_0.10} & 100 & 503   & 4,558  & 1,867 & 868    & 9     & 0     & 0.06   \\
\texttt{RND\_0.15} & 100 & 810   & 10,823 & 1,235 & 3,444  & 378   & 211   & 0.34   \\
\texttt{RND\_0.20} & 100 & 1,058 & 17,337 & 511   & 6,126  & 1,614 & 965   & 1.40   \\
\texttt{RND\_0.25} & 100 & 1,261 & 23,406 & 274   & 8,378  & 2,463 & 1,323 & 3.57   \\
\texttt{RND\_0.30} & 100 & 1,541 & 32,444 & 30    & 10,945 & 3,338 & 1,466 & 9.623   \\
\texttt{RND\_0.35} & 100 & 1,812 & 41,242 & 7     & 13,683 & 3,382 & 1,616 & 20.19  \\
\texttt{RND\_0.40} & 100 & 1,966 & 45,987 & 2     & 14,988 & 3,470 & 1,528 & 27.58  \\
\texttt{RND\_0.45} & 100 & 2,253 & 54,553 & 1     & 17,523 & 3,506 & 1,494 & 46.07  \\
\texttt{RND\_0.50} & 100 & 2,470 & 60,337 & 0     & 19,323 & 3,456 & 1,544 & 63.04   \\
\texttt{RND\_0.55} & 100 & 2,773 & 67,027 & 0     & 21,103 & 3,604 & 1,396 & 87.25  \\
\texttt{RND\_0.60} & 100 & 2,981 & 70,308 & 0     & 21,897 & 3,687 & 1,313 & 101.20 \\
\texttt{RND\_0.65} & 100 & 3,223 & 71,881 & 0     & 22,766 & 3,589 & 1,384 & 108.66 \\
\texttt{RND\_0.70} & 100 & 3,457 & 71,135 & 0     & 22,923 & 3,747 & 1,184 & 104.21 \\
\texttt{RND\_0.75} & 100 & 3,748 & 67,644 & 0     & 21,707 & 2,328 & 700   & 91.00  \\
\texttt{RND\_0.80} & 100 & 3,994 & 61,000 & 0     & 19,990 & 350   & 62    & 66.70  \\
\texttt{RND\_0.85} & 100 & 4,259 & 50,326 & 0     & 16,822 & 2     & 24    & 38.02  \\
\texttt{RND\_0.90} & 100 & 4,460 & 39,011 & 2     & 13,635 & 0     & 0     & 17.86  \\
\texttt{RND\_0.95} & 100 & 4,726 & 19,977 & 762   & 7,509  & 0     & 0     & 2.53  \\
\bottomrule
\end{tabular}
}
\end{table}

For each instance, we identified the entire set of 4-holes, but we employed a subset of them to generate the 4-hole cutting planes. Note that the expected number of 4-holes in random graphs with moderate densities is much higher than the expected number of open triangles. Besides, every 4-hole inequality~\eqref{eq:hole} dominates four OT inequalities, which can be eliminated from the original formulation upon adding the 4-hole cuts. In this regard, our solution strategy was to find a small subset of 4-hole inequalities that could cover a large portion (possibly all) of OT inequalities. Clearly, finding a minimum number of 4-holes to dominate all possible open triangles in a graph is a set covering problem, which is NP-hard in general. We employed a well-known greedy heuristic for this problem to generate the 4-hole cuts; at every iteration, we selected a 4-hole with the maximum number of uncovered open triangles, and stopped when this number was zero for all the remaining 4-holes. Obviously, not all open triangles are guaranteed to be part of a 4-hole in a graph, thus not all OT inequalities could be replaced with the generated 4-hole cuts. Table~\ref{tab44} shows the number of generated 4-hole cuts for each instance, as well as the number of OT inequalities that were not covered by them and remained in the problem formulation.

We also generated a set of wheel and fan cuts for each instance. To do this, for every vertex $i \in V$, we considered the subgraph induced by its neighbors, i.e., $G[N(i)]$, and detected a set of chordless cycles and paths using a simple enumeration method. Clearly, a graph may contain an exponential number of such structures, thus we restricted the search on each subgraph to at most 50 cycles and paths or 1.00 CPU second, whichever came first before the search was complete. In order to generate high-quality wheel and fan cuts, we only considered (chordless) cycles and paths with at least 7 vertices that satisfied the facet-defining conditions of Theorems~\ref{th:fan} and~\ref{th:wheel}. The number of generated wheel and fan cuts for each instance is presented in Table~\ref{tab44}. The last column of this table shows the aggregate time spent on generating all 4-hole, wheel and fan cuts for each instance. It should be mentioned that, for all instances, the CPU time taken to generate the wheel and fan inequalities was negligible compared to the required time to generate the 4-hole cuts. In fact, the maximum time spent by the algorithm to generate the entire set of wheel and fan cuts for an instance remained below 1.20 CPU seconds.

The results of the second set of experiments are presented in Table~\ref{tab45}. We solved each instance using the base formulation~\eqref{eq:IPiuc}, as well as its enhancement obtained by adding the generated valid inequalities, denoted by ``+VI'' in Table~\ref{tab45}. The second column of this table shows the optimal solution value of the {maximum IUC} problem for each instance. The next two columns present the optimal solution value of the corresponding LP relaxation problems. The last four columns of this table compare the size of the branch-and-cut (B\&C) trees and solution times. In this set of experiments, the automatic cut-generation of the solver was also on. The results clearly show the effectiveness of the proposed valid inequalities. Significant improvements in the solution times are observed for the graphs with the edge density of less than 65\%, as well as those with the edge density of 80\% and more, due to incorporating the generated cutting planes. The +VI solution times were slightly better than those for the base formulation on {\tt RND\_0.65} and {\tt RND\_0.75}, and slightly worse on {\tt RND\_0.70}. The better solution time for each instance is shown in bold in Table~\ref{tab45}. For all instances, the number of B\&C nodes was considerably smaller under the +VI strategy.

\begin{table}[t]
\centering{
\caption{Second set of experiments: integer (linear) programming solution results.} \label{tab45}
\begin{tabular}{lcccrrrr}
\toprule
 & & \multicolumn{2}{c}{LP rlx. opt. value} & \multicolumn{2}{c}{\# B\&C nodes} & \multicolumn{2}{c}{Solution time (CPU sec.)}\\
\cmidrule(r){3-4} \cmidrule(r){5-6} \cmidrule(r){7-8} 
\multicolumn{1}{c}{{Name}} & \multicolumn{1}{c}{{opt.}} & \multicolumn{1}{c}{{Base}} & \multicolumn{1}{c}{{+VI}} & \multicolumn{1}{c}{{Base}} & \multicolumn{1}{c}{{+VI}} & \multicolumn{1}{c}{{Base}} & \multicolumn{1}{c}{{+VI}} \\
\hline
\texttt{RND\_0.05} & 53 & 66.67 & 59.22 & 3,241     & 0       & 22.51     & {\bf 5.63}     \\
\texttt{RND\_0.10} & 39 & 66.67 & 49.71 & 213,358   & 17,005  & 1,778.93  & {\bf218.46}   \\
\texttt{RND\_0.15} & 28 & 66.67 & 46.60 & 2,017,952 & 160,544 & 35,357.90 & {\bf2,284.37} \\
\texttt{RND\_0.20} & 24 & 66.67 & 44.92 & 1,112,661 & 180,755 & 23,173.20 & {\bf4,248.66} \\
\texttt{RND\_0.25} & 21 & 66.67 & 44.30 & 1,159,561 & 192,961 & 34,036.80 & {\bf5,123.73} \\
\texttt{RND\_0.30} & 18 & 66.67 & 44.29 & 724,553   & 227,952 & 35,861.80 & {\bf8,733.08} \\
\texttt{RND\_0.35} & 16 & 66.67 & 44.33 & 615,306   & 191,929 & 30,004.00 & {\bf8,444.93} \\
\texttt{RND\_0.40} & 15 & 66.67 & 44.44 & 468,742   & 125,469 & 26,178.80 & {\bf6,928.29} \\
\texttt{RND\_0.45} & 13 & 66.67 & 44.44 & 295,550   & 126,449 & 19,691.90 & {\bf7,250.91} \\
\texttt{RND\_0.50} & 12 & 66.67 & 44.44 & 222,235   & 104,277 & 18,260.00 & {\bf8,861.15} \\
\texttt{RND\_0.55} & 11 & 66.67 & 44.44 & 176,862   & 70,025  & 13,810.80 & {\bf6,621.82} \\
\texttt{RND\_0.60} & 11 & 66.67 & 44.44 & 111,914   & 49,889  & 9,102.53  & {\bf5,930.36} \\
\texttt{RND\_0.65} & 13 & 66.67 & 44.44 & 66,995    & 33,969  & 5,615.65  & {\bf4,927.96} \\
\texttt{RND\_0.70} & 15 & 66.67 & 44.44 & 57,423    & 31,844  & {\bf 5,273.13}  & 5,414.28 \\
\texttt{RND\_0.75} & 19 & 66.67 & 44.48 & 31,135    & 19,714  & 5,163.23  & {\bf4,490.54} \\
\texttt{RND\_0.80} & 21 & 66.67 & 46.63 & 74,719    & 29,461  & 7,943.90  & {\bf4,605.04} \\
\texttt{RND\_0.85} & 25 & 66.67 & 49.75 & 46,781    & 22,872  & 4,574.90  & {\bf2,404.52} \\
\texttt{RND\_0.90} & 31 & 66.67 & 50.00 & 18,562    & 7,787   & 1,897.80  & {\bf492.21}   \\
\texttt{RND\_0.95} & 46 & 66.67 & 51.50 & 0         & 0       & 34.24     & {\bf5.16}    \\
\bottomrule
\end{tabular}
}
\end{table}
\section{Conclusion}
\label{sec:conclusion}
In this paper, we presented a study of the IUC polytope associated with a simple undirected graph $G=(V,E)$, defined as the convex hull of the incidence vectors of all subsets of $V$ inducing cluster subgraphs of $G$. It was shown that the fractional IUC polytope, obtained from relaxing integrality of the variables in definition of the original polytope, is a polyhedral outer-approximation of a cubically constrained region in the space of the original variables; hence, it provides a very weak approximation of the (integral) IUC polytope, remarking the importance of incorporating strong valid inequalities in integer (linear) programming solution methods of the {\sc Maximum IUC} problem. We derived several facet-defining valid inequalities for the IUC polytope associated with cycle, anti-cycle, star, double-star, fan and wheel graphs, along with the conditions that they remain facet-defining for the IUC polytope associated with a supergraph containing them, as well as some results concerning the corresponding lifting procedures. We also presented a complete description of the IUC polytope for some special classes of graphs. In our presentation, we spotted the similarities between the facial structure of the IUC polytope and those of the independent set (vertex packing) and clique polytopes. For each family of the proposed valid inequalities, we studied the computational complexity of the corresponding separation problem. In particular, we showed that the separation problem for the families of star, double-star, fan and wheel inequalities are NP-hard. We also examined the effectiveness of the proposed valid inequalities when employed in an integer (linear) programming solution method of the {\sc Maximum IUC} problem through computational experiments. Our results showed great improvement in solution time of this problem, revealing the merit of the proposed valid inequalities from the computational standpoint.

As an extension of this research, one may search for other facet-producing structures for the IUC polytope. As stated previously, an immediate step could be to investigate webs that subsume cycle and anti-cycle graphs. Developing detailed branch-and-cut algorithms for the {\sc Maximum IUC} problem based on effective heuristic separation procedures for the proposed valid inequalities in this paper is another direction for future research. Another interesting direction is to explore LP-based scale reduction techniques for the {\sc Maximum IUC} problem. It should be mentioned that, because of the cubic nature of the IUC formulation~\eqref{eq:IPiuc} as explained before, $\mathscr{P}^{F}_{_{\!\!\mathcal{IUC}}}(G)$ does not generally hold the favorable structural properties of the fractional independent set (vertex packing) and clique polytopes such as persistency~\cite{NemhauserTrotter75}. However, we observed through a set of computational experiments that the frequency of non-persistent optimal solutions to LP relaxation of the IUC formulation~\eqref{eq:IPiuc} is not high. In this regard, it is of interest to identify the conditions under which an LP relaxation optimal solution of~\eqref{eq:IPiuc} shows persistency.
\paragraph*{Acknowledgements}
We would like to thank the three anonymous referees for their constructive comments and suggestions.
Partial support by AFOSR award FA9550-19-1-0161 is also gratefully acknowledged.

\end{document}